\def\bC{\mathbf{C}}
\def\bbbp{\mathbb{P}}
\def\bbbl{\mathbb{L}}
\def\bbbm{\mathbb{M}}
\def\YY{\mathbb{Y}}
\def\Esp{\mathbb{E}}
\def\Var{\mathbb{V}}
\def\Bcal{\mathcal{B}}
\def\Ccal{\mathcal{C}}
\def\Dcal{\mathcal{D}}
\def\Lcal{\mathcal{L}}
\def\Ncal{\mathcal{N}}
\def\Tcal{\mathcal{T}}
\def\Xcal{\mathcal{X}}
\def\bbbr{\mathbb{R}}
\def\bbbc{\mathbb{C}}
\def\bbbn{\mathbb{N}}
\def\N{{N}}
\def\P{\mathbb{P}}
\def\d{\mathrm{d}}
\def\Lip{\text{Lip}}
\def\XX{\mathbb{X}}
\def\bC{\mathbf{C}}
\def\mmmd{{\Delta}}
\newtheorem{theo}{Theorem}[section]
\newtheorem{lem}[theo]{Lemma}
\newtheorem{rem}[theo]{Remark}
\newtheorem{cor}[theo]{Corollary}
\begin{document}

\title[Stein's method with Papangelou intensity]{Stein's method and Papangelou intensity  for
    Poisson or Cox process approximation}

  \author{Laurent Decreusefond}
  \author{Aur\'elien Vasseur}
\address{LTCI, Telecom ParisTech, Universit\'e Paris-Saclay, 75013, Paris,
  France}
\email{laurent.decreusefond@telecom-paristech.fr, aurelien.vasseur@telecom-paristech.org }
\subjclass[2000]{Primary 60F17, 60G55; Secondary 60D05, 60H07}

\begin{abstract}
  In this paper, we apply the Stein's method in the context of point processes,
  namely when the target measure is the distribution of a finite Poisson point
  process. We show that the so-called Kantorovich-Rubinstein distance between
  such a measure and another finite point process is bounded by the
  $L^1$-distance between their respective Papangelou intensities. Then, we
  deduce some convergence rates for sequences of point processes approaching a
  Poisson or a Cox point process.
\end{abstract}

\keywords{Stochastic geometry, point process, Poisson point process, Stein's method, Papangelou intensity, convergence, Glauber dynamics, Kantorovich-Rubinstein distance}

\maketitle{}

\section{Introduction} \label{sec_introduction}

A fruitful way to get some approximations in probability theory is drawn from
the Stein's method, introduced in 1972 by Stein \cite{stein_bound_1972} in order
to give a convergence speed for the Central Limit Theorem. The approach was soon
extended to the Poisson distribution by Chen
\cite{chen_poisson_1975}. There are too many articles about the development and
applications of this method for them to be cited. If we restrict our attention
to point processes, we must cite  \cite{barbour_steins_1988, barbour_steins_1992,
  barbour_steins_2006} about Poisson point process approximation. In this series
of articles, the distance between a generic point process and a Poisson point process is
stated in terms of the Palm measure of the tested process or in terms of its
compensator when processes on the half line are considered. After the pioneering
work \cite{nourdin_normal_2012}, it became evident that the Malliavin calculus
is one tool of choice to systematize the Stein's approach. In this respect,
\cite{decreusefond_upper_2010} explores the link between the different notion of
Malliavin gradients on the Poisson space and the different notions of distance
between distribution of point processes (see also
\cite{schuhmacher_new_2008}). In \cite{decreusefond_functional_2016}, it is
shown that some transformations of a Poisson point process converge to a Poisson
process on the real line using tools of the Poisson-Malliavin calculus. Some other
applications of this framework are given in
\cite{last_normal_2016,schulte_distances_2014}. As we will see below, the
Stein's method is based on a representation of the target measure as the
invariant and stationary measure of a configuration-valued Markov process. The existence of such a
process is absolutely not granted so there is a very few results
about convergence to processes which are not Poissonian. The paper
\cite{schuhmacher_gibbs_2014}  by Schumacher
and Stucki, which addresses the convergence towards a Gibbs point process
then  appears as an exception.

In this article, we develop a variant of the Stein's method in order to evaluate
the distance between the distribution of some point processes and that of a
Poisson point process. In what follows, we abuse slightly the vocabulary by
identifying a point process as a random variable and its distribution. Let
$\zeta_M$ denote a Poisson process with finite intensity measure $M$ on a space
$\XX$. The first step of the Stein's method consists in characterizing the target
object, here the distribution of  $\zeta_M$. We must first construct a
functional operator $L$ which satisfies 
\begin{equation*}
  \Bigl( \Esp[LF(\Phi)]=0,\ \forall F\in \mathcal F \Bigr) \Longleftrightarrow \Phi\stackrel{\text{dist}}{=}\zeta_M,
\end{equation*}
where $\mathcal F$ is a rich enough class of test functions. For $\zeta_M$, it is
known that we can take
\begin{equation}\label{eq_Article_Stein:1}
  LF(\phi)=\int_{\XX} (F(\phi+x)-F(\phi)) M(\d x)+\sum_{y\in\phi} (F(\phi\setminus y)-F(\phi)).
\end{equation}
The second step is to solve the so-called Stein's equation, that is to find, for
any test function $F:\widehat{N}_\XX\to\bbbr$, a function
$H_F:\widehat{N}_\XX\to\bbbr$ such that, for any $\phi\in\widehat{N}_\XX$,
\begin{equation*}
  LH_F(\phi)=\Esp[F(\zeta_M)]-F(\phi).
\end{equation*}
We here use the so-called generator approach (see \cite{reinert_three_2005} for a
survey and also~\cite{coutin_steins_2013,shih_steins_2011}). In the
current situation, the Markov process with values in the space of configurations
$\widehat{N}_\XX$, 
which has invariant and stationary measure $\zeta_M$ is called the Glauber
process. If $(P_t)_{t\geq0}$ is the associated semi-group, one can show that, for any $\phi\in \widehat{N}_\XX$,
\begin{equation*}
  LH_F(\phi)=\int_0^{+\infty}LP_sF(\phi)\d s,
\end{equation*}
which leads to the so-called Stein-Dirichlet representation formula:
\begin{equation*}
  \Esp[F(\zeta_M)]-F(\phi)=\int_0^{+\infty}LP_sF(\phi)\d s.
\end{equation*}

The Kantorovich-Rubinstein (or Wasserstein-1) distance between a point
process $\Phi$ and $\zeta_M$ is defined as
\begin{equation*}
  \sup_{F\in \Lip_{1}(\widehat{N}_{\XX}) }\Esp[F(\zeta_M)]-\Esp[F(\zeta_M)].
\end{equation*}
Using \eqref{eq_Article_Stein:1}, the next step is then to transform the
rightmost term
\begin{equation*}
  \Esp\left[   \sum_{y\in \Phi} \left( P_{s}F(\Phi\setminus y)-P_{s}F(\Phi) \right) \right]
\end{equation*}
into the expectation of an integral with respect to the measure $M$ plus a
remainder term. To do so in  the present article, our
strategy is based on the use of the Papangelou intensity instead of the Palm
measure as in \cite{barbour_steins_1992}. Although the link
between Papangelou intensity and Palm theory is strong, our choice offers the
advantage of dealing with  a function rather than  a probability measure. The
calculations are then easier. Intuitively, if $c$ denotes the
Papangelou intensity of a given point process $\Phi$ (with respect to a given
Radon measure $\ell$), the probability of finding a particle in the location $x$
given that there is a particle located at each point of the configuration (or
locally finite subset) $\phi$ is represented by the quantity $c(x,\phi)$. For a
Poisson point process $\zeta_{M}$ with intensity $M(\d x)=m(x)\ell(\d x)$, the
Papangelou intensity is simply given by
\begin{equation*}
  c(x,\phi)=m(x).
\end{equation*}
This corroborates  the idea that a given particle of a Poisson point
process does not depend on the other particles of the configuration.

Among the asymptotic behaviors we consider in this work, several of them 
concern transformations of  point processes. More specifically,
a way to transform a point process into a new point process with less
interactions between its particles (in other words to reduce its level of
repulsiveness or attractiveness) is to use operations which insert some
independence. We will in particular focus on two transformations: independent
superposition and independent thinning. 

One important part of our contributions consists in new
results about the Papangelou intensities, stating how it is transformed by reduction to a
compact,  independent superposition or independent thinning and 
rescaling.

The paper is organized as follows. In Section \ref{sec_preliminaries}, the main
elements from the point process theory are presented more formally. In Section
\ref{sec_steins_method}, we present in details the development of the Stein's
method when the target measure is a  finite Poisson point process, state the so-called Stein-Dirichlet
representation formula and obtain an upper bound for the Kantorovich-Rubinstein
distance associated to the total variation distance between a finite Poisson
point process and another finite point process. We give in Section
\ref{sec_papangelou} the elements concerning Papangelou intensities which will
be necessary to state some convergence results in the next section, in
particular the definition of weak repulsiveness. From all these preliminary
results, we deduce in Section \ref{sec_applications} some convergence rates when
considering Kantorovich-Rubinstein distance between Poisson or Cox point
processes and other point processes, which are Poisson-like point processes in
Subsection \ref{subsec_applicationpoissonlike}, repulsive point processes in
Subsection \ref{subsec_applicationrepulsive} and thinned point processes in
Subsection \ref{subsec_kallenberg}, where a convergence rate is provided for the
aforementioned theorem from Kallenberg \cite{kallenberg_random_1983}. Proofs are
given in Section \ref{sec_proofs}.

\section{Preliminaries} \label{sec_preliminaries}

\subsection{Notations} \label{subsec_notations}

In this work, we use classical mathematical notations. In particular, $\bbbn$
denotes the space of positive integers, $\bbbn_0$ the space of non-negative
integers, $\bbbr$ the space of real numbers and $\bbbc$ the space of complex
numbers.

We consider a locally compact metric space $(\XX,\mmmd_\XX)$ endowed with its
Borel $\sigma$-algebra $\Xcal$ and a (not necessarily diffuse) Radon measure
$\ell$ on $\XX$. The family of relatively compact Borel sets is denoted by
$\Xcal_0$. A distance $\mmmd$ on a space $\YY$ will be denoted $\mmmd_{|\YY}$ if
necessary. 

The set of bounded measurable functions from $\XX$ to $\bbbr_+$ with compact
support is denoted $\Bcal_+(\XX)$. If $f$ is a function from $\XX$ to $\bbbc$,
then $\|f\|_\infty$ denotes the supremum of the set $\{|f(x)|:~x\in\XX\}$. For
$p\in[1,+\infty)$, $L^p(\XX,\ell)$ denotes the space of functions
$f:\XX\to\bbbc$ such that $|f|^p$ is integrable with respect to $\ell$. The
space of continuous functions from $\XX$ to $\bbbr$ (respectively $\bbbc$) with
compact support is denoted $\Ccal_K(\XX,\bbbr)$ (respectively
$\Ccal_K(\XX,\bbbc)$). The integral of an integrable function $f$ with respect
to $\ell$ will be more simply written $\int_\XX f(x)\d x$ when there is no
ambiguity.

The space of measures on $(\XX,\Xcal)$ will be denoted $\bbbm$, $\bbbm_R$ is the
space of Radon measures on $\XX$ and $\bbbm_1$ the family of all probability
measures on $\XX$. The space of measures on $\bbbm$ and the space of probability
measures on $\bbbm$ will be respectively denoted by $\bbbm'$ and $\bbbm_1'$. For
any $x\in\XX$, $\delta_x$ denotes the Dirac measure at $x$. For any
$A\in\Xcal$, $\ell(A)$ may also be denoted $|A|$. For any $A\subset\XX$,
$\textbf{1}_A$ denotes the indicator function of the subset $A$ of $\XX$. If
$\varphi\in\bbbm$ and $m$ is a function on $\XX$ integrable with respect to
$\varphi$, then $\langle m,\varphi \rangle$ denotes the integral of $m$ with
respect to $\varphi$, the measure $\nu$ with density $m$ with respect to
$\varphi$ is denoted $m\varphi$. In this case, $m$ is denoted
$\frac{\d\nu}{\d\varphi}$ and the fact that $\nu$ is absolutely continuous with
respect to $\varphi$ is denoted $\nu\ll\varphi$.

For any random element $X$ of $\XX$, $\P_X$ denotes the probability distribution
of $X$. If $\mmmd$ is a distance on $\bbbm_1$ and $X_1,X_2$ two random elements
of $\XX$ with respective distributions $\P_1,\P_2$, then we will also write
$\mmmd(X_1,X_2)$ instead of $\mmmd(\P_1,\P_2)$.


A {counting measure} $\xi$ on $\XX$ is a measure on $\XX$ such that, for any
$A\in\Xcal_0$,

\begin{equation*}
  \xi(A)\in\bbbn_0.
\end{equation*}

A {configuration} (respectively {finite configuration}) on $\XX$ is a locally
finite (respectively finite) counting measure on $\XX$.

The space of configurations on $\XX$ is denoted $\N_\XX$ and $\widehat{\N}_\XX$
denotes the space of finite configurations on $\XX$. We endow $\N_\XX$ with
$\Ncal_\XX$ defined as the smallest $\sigma$-algebra on $\N_\XX$ such that
$\phi\in\N_\XX\mapsto \phi(A)$ is measurable for any $A\in\Xcal_0$. The
restriction of $\Ncal_\XX$ to $\widehat{\N}_\XX$ is denoted
$\widehat{\Ncal}_\XX$.

Note that, although a configuration is defined as a measure, it also may be
considered as a locally finite subset of points, and will then often be
introduced with set-theoretic notations.

For a function $F:N_\XX\to\bbbr$ which is integrable with respect to $\omega$,
the integral $\int_\XX F(x)\omega(\d x)$ will be often denoted
$\sum_{x\in\omega}F(x)$. For any $k\in\bbbn$ and $x_1,\dots,x_k\in\XX$,
$\{x_1,\dots,x_k\}$ also denotes the configuration $\omega$ defined, for any
$A\in\Xcal$, by
\begin{equation*}
  \omega(A)=\#\big\{ i\in\{1,\dots,k\},x_i\in A\big\}
\end{equation*}
where, for any finite set $B$, $\#B$ denotes the number of elements in $B$. By a
slight abuse of notation, we will also denote $F(x_1,\dots,x_k)$ instead of
$F(\{x_1,\dots,x_k\})$.

A function $f:\N_\XX\to\bbbr$ is said to be {increasing} (resp. {decreasing})
if, for any $\phi_1,\phi_2\in\N_\XX$,
\begin{equation*}
  (\phi_1\subset\phi_2)\implies(f(\phi_1)\leq f(\phi_2))\ [\text{resp.}\ (\phi_1\subset\phi_2)\implies(f(\phi_1)\geq f(\phi_2))].
\end{equation*}
A subset $A$ of $\N_\XX$ is said to be {increasing} (resp. {decreasing}) if
$\textbf{1}_A$ is increasing (resp. decreasing), that is, if for any $\phi_1\in
A$ and $\phi_2\in N_\XX$,
\begin{equation*}
  (\phi_1\subset\phi_2)\implies(\phi_2\in A)\ [\text{resp.}\  (\phi_2\subset\phi_1)\implies(\phi_2\in A)].
\end{equation*}
\subsection{Point processes} \label{subsec_point_processes}

Point processes are formally seen as random locally finite subsets of points and
provide a powerful mathematical tool with some applications in many areas, such
as forestry \cite{stoyan_recent_2000}, astronomy \cite{babu_spatial_1996} or
epidemiology \cite{gatrell_spatial_1996}, telecommunications and precisely
wireless networks \cite{baccelli_stochastic_2010}, and more generally in each
field where the spatial  distribution some particles needs to be analyzed in a
mathematical way.

The following notions concern point process theory and come
essentially from \cite{daley_introduction_2003}.

A {point process} $\Phi$ on $\XX$ is a random configuration on $\XX$. Its
{intensity measure} is the measure $M$ on $(\XX,\Xcal)$ defined, for any
$A\in\Xcal$, by
\begin{equation*}
  M(A)=\Esp[\Phi(A)].
\end{equation*}
To describe the distribution of a point process, several characterizations are
available. Among them, the Laplace functional
$\Lcal_\Phi:\Bcal_+(\XX)\to\bbbr_+$ is given for any $f\in \Bcal_+(\XX)$ by
\begin{equation*}
  \Lcal_\Phi(f)=\Esp\Big[\exp\Big(-\int_\XX f(x)\Phi(\d x)\Big)\Big].
\end{equation*}
and offers the advantage to give easily some information on point processes built by
independent superposition or other independent transformations. It is
mathematically speaking the most efficient but is not the most intuitive. It may
be useful to characterize a point process by considering other functionals such
as its Janossy function and correlation function.

If $x_1,\dots,x_n$ are $n$ particles in $\XX$, the Janossy function $j$ is
defined in such a way that $j(x_1,\dots,x_n)$ intuitively represents  the
probability of finding exactly $n$ particles in the vicinity of  $x_1,\dots,x_n$,
while the correlation function $\rho$ is such that $\rho(x_1,\dots,x_n)$
represents the probability of finding at least $n$ particles in the neighborhood
of  $x_1,\dots,x_n$,
with possibly some  other particles at other locations.

Formally, if $\Phi$ is almost surely finite, its {Janossy function} (with
respect to $\ell$) $j:\widehat{\N}_\XX\to\bbbr_+$ is then defined, if it exists,
for any measurable function $u:\widehat{\N}_\XX\to\bbbr_+$ by

\begin{equation*}
  \Esp[u(\Phi)]=\displaystyle\sum_{k=0}^{+\infty} \dfrac{1}{k!}\int_{\XX^k} u(x_1,\dots,x_k)j(x_1,\dots,x_k) \ell(\d x_1) \dots \ell(\d x_k)
\end{equation*}
and its {correlation function} (with respect to $\ell$)
$\rho:\widehat{\N}_\XX\to\bbbr_+$ is defined, if it exists, for any measurable
function $u:\widehat{\N}_\XX\to\bbbr_+$ by

\begin{equation*}
  \Esp\Big[\displaystyle\sum_{\substack{\alpha\in\widehat{\N}_\XX \\ \alpha\subset\Phi}}u(\alpha)\Big]=\displaystyle\sum_{k=0}^{+\infty} \dfrac{1}{k!}\int_{\XX^k} u(x_1,\dots,x_k)\rho(x_1,\dots,x_k) \ell(\d x_1) \dots \ell(\d x_k).
\end{equation*}

Correlation function also provides a way to specify the repulsiveness or
attractiveness of a point process, which will be considered as repulsive
(respectively attractive) as soon as, for any $x,y$,

\begin{equation} \label{repulsivenesscorrelation} \rho(x,y)\leq\rho(x)\rho(y)\
  [\text{resp.}\ \rho(x,y)\geq\rho(x)\rho(y)].
\end{equation}



Another typical functional both provides a way to characterize a point process
and an intuitive interpretation: introduced in 1974 by Papangelou
\cite{papangelou_conditional_1974}, the Papangelou intensity $c$ of a point
process $\Phi$ relies on the so-called {reduced Campbell measure} $C$, defined
on the product space $(\XX\times\N_\XX,\Xcal\otimes\Ncal_\XX)$ for any
$A\in\Xcal\otimes\Ncal_\XX$ by
\begin{equation*}
  C(A)=\Esp\Big[\displaystyle\sum_{x\in\Phi}\textbf{1}_A(x,\Phi\setminus x)\Big].
\end{equation*}

If $C\ll\ell\otimes\P_\Phi$, any Radon-Nikodym density $c$ of $C$ relative to
$\ell\otimes\P_\Phi$ is then called (a version of) the {Papangelou intensity} of
$\Phi$. More explicitly $c$ is a Papangelou intensity of $\Phi$ if, for any
measurable function $u:\XX\times\N_\XX\to\bbbr_+$,
\begin{equation*} \label{GNZ}
  \Esp\Big[\displaystyle\sum_{x\in\Phi}u(x,\Phi\setminus x)\Big]=\int_\XX
  \Esp[c(x,\Phi)u(x,\Phi)]\ell(\d x).
\end{equation*}
Hence $c(x,\phi)$ represents the probability of finding a particle in the
vicinity of $x$ given that there is a particle located at each point of the
configuration $\phi$. In particular, this leads to consider the variations of
this quantity when the configuration $\phi$ increases: if $\omega\subset\phi$
implies that
\begin{equation} \label{repulsivenesspapangelou} c(x,\phi)\leq c(x,\omega)\
  [\text{resp.}\ c(x,\phi)\geq c(x,\omega)]
\end{equation}
then it rather exhibits repulsiveness (resp. attractiveness).

The Papangelou intensity $c$ of a finite point process on $\XX$ may furthermore
be linked respectively to its Janossy function $j$ and its correlation function
$\rho$ in the following ways: for any $x\in\XX$,

\begin{equation} \label{Papangelouprop2} \Esp[c(x,\Phi)]=\rho(x)
\end{equation}
and, if $\{j=0\}$ is an increasing set, then, for any $x\in\XX$ and $\phi\in
N_\XX$,

\begin{equation} \label{Papangelouprop1}
  c(x,\phi)=\dfrac{j(x\phi)}{j(\phi)}\textbf{1}_{\{j(\phi)\neq0\}}.
\end{equation}

\subsection{Poisson-based point processes} \label{subsec_poisson_based}

We recall here the definitions and some basic properties of some Poisson-based
point processes, in particular their Laplace functionals, Janossy functions and
correlation functions.

The most elementary point process is the {binomial point process}, depending on
a fixed parameter $N\in\bbbn_0$ and a probability measure $\mu$ on $\XX$. Such a
point process has exactly $N$ points drawn independently according to $\mu$.



If $M$ is a Radon measure on $\XX$, the {Poisson point process} (PPP) $\Phi$
with intensity measure $M$ is defined as the unique point process on $\XX$ with
intensity measure $M$ such that, for any disjoint relatively compact subsets
$\Lambda_1,\Lambda_2$, the random variables $\Phi(\Lambda_1)$ and
$\Phi(\Lambda_2)$ are independent. A Poisson point process on $\XX$ with finite
intensity measure $M$ may also be defined as a finite point process $\Phi$ on
$\XX$ such that its total number of points $N$ has a Poisson distribution with
parameter $M(\XX)$ and, conditionally to $N$, $\Phi$ is a binomial point process
on $\XX$ with parameter $N$ and supported by $\frac{M(\cdot)}{M(\XX)}$. Its
Laplace functional is given for any $f\in \Bcal_+(\XX)$ by

\begin{equation*}
  \Lcal_\Phi(f)=\exp\Big\{-\int_\XX (1-e^{-f(x)})M(\d x)\Big\}.
\end{equation*}

If $M(\d x)=~m(x)\d x$, its correlation function $\rho$ is given for any
$\phi\in\widehat{\N}_\XX$ by

$$\rho(\phi)=\displaystyle\prod_{x\in\phi}m(x);$$

and, if $M(\XX)<+\infty$, its Janossy function $j$ is given for any
$\phi\in\widehat{\N}_\XX$ by

$$j(\phi)=e^{-M(\XX)}\displaystyle\prod_{x\in\phi}m(x).$$


A Poisson point process $\Phi$ verifies the Mecke formula: for any measurable
function $u:\XX\times N_\XX\to\bbbr_+$,

\begin{equation*}
  \Esp\Big[\displaystyle\sum_{x\in\Phi}u(x,\Phi\setminus x)\Big]=\int_\XX \Esp[u(x,\Phi)]M(\d x).
\end{equation*}

Conversely, if $M$ is locally finite and $\Phi$ such that, for any $x\in\XX$,
$\Phi(\{x\})\leq1$ a.s., then $\Phi$ is a Poisson point process with intensity
measure $M$.


The Poisson point process may be characterized as the only point process with no
interactions between its particles, that is, without repulsiveness or
attractiveness. For this point process, each compact subset has a
Poisson-distributed number of particles and the respective numbers of particles
in two disjoint compact subsets are independent. It verifies the equality in
(\ref{repulsivenesscorrelation}) and (\ref{repulsivenesspapangelou}) and may be
in this sense considered as the "zero" point process between repulsive and
attractive point processes.

Another crucial property of the space of Poisson point processes is its
stability for independent superposition and thinning: in other words, if
$\Phi_1,\dots,\Phi_n$ are $n$ Poisson point processes with respective intensity
measures $M_1,\dots,M_n$ and $\beta_1,\dots,\beta_n\in[0,1]$, then
$\beta_1\circ\Phi_1+\dots+\beta_n\circ\Phi_n$ is a Poisson point process with
intensity measure $\beta_1M_1+\dots+\beta_nM_n$. In particular, a Poisson point
process $\Phi$ verifies the following invariance property: for any $t\in[0,1]$,

\begin{equation*}
  t\circ\Phi^{(1)}+(1-t)\circ\Phi^{(2)}\overset{\Dcal}{=}\Phi,
\end{equation*}

where $\Phi^{(1)}$ and $\Phi^{(2)}$ are independent copies of $\Phi$.



Poisson point processes are also used as a basis for the definition of some
other categories of point processes, that will be called Poisson-like point
processes.



Among them, a {Cox point process} \cite{cox_point_1980} directed by a random
measure $M$ on $\XX$ is a point process $\Phi$ such that, conditionally to $M$,
$\Phi$ is a Poisson point process with intensity measure $M$; it provides a
useful tool to model attractive distribution of particles.



A {purely random point process} (PRPP) \cite{matthes_infinitely_1978} supported
by a probability measure $\mu$ on $\XX$ and a sequence
$(p_n)_{n\in\bbbn_0}\subset\bbbr_+$ such that $\sum_{n=0}^{+\infty}p_n=1$ is a
finite point process $\Phi$ on $\XX$ such that its number of points $N$ in $\XX$
verifies for any $n\in\bbbn_0$
$$\P(N=n)=p_n;$$ 
and, conditionally to $N$, $\Phi$ is a binomial point process on $\XX$ with
parameter $N$ and supported by $\mu$. In particular, a binomial point process is
a purely random point process with a deterministic number of points, and a finite
Poisson point process is a purely random point process whose number of points
has a Poisson distribution.

Let consider now $C\in\Ncal_\XX$, a Poisson point process $\Phi$ with intensity
measure $M$ and a sequence $(\Phi^{(n)})_{n\in\bbbn}$ of independent copies of
$\Phi$. The point process $\Phi_C$ defined as
\begin{equation*}
  \Phi_C:=\Phi^{(n)}\ \text{if}\ \Phi^{(n)}\in C\ \text{and},\ \forall i\in\{1,\dots,n-1\},\ \Phi^{(i)}\notin C
\end{equation*}
is called the {conditional Poisson point process} associated to $\Phi$ with
intensity measure $M$ and condition $C$.

In particular, the conditional Poisson point process $\Phi_R:=\Phi_{C_R}$
associated to a Poisson point process $\Phi$ on the metric space
$(\XX,\mmmd_\XX)$ with finite parameter measure $M$ and condition

\begin{equation*}
  C_R:=\{\phi\in\widehat{N}_\XX:\forall x,y\in\phi,x\neq y\implies \mmmd_\XX(x,y) \geq R\}\ \text{where}\ R>0
\end{equation*}

is a {hardcore (conditional) Poisson point process}
\cite{haenggi_stochastic_2012} with parameter $R$.


The conditional Poisson point process $\Phi_N:=\Phi_{C_N}$ associated to a
Poisson point process $\Phi$ on $\XX$ with finite parameter measure $M$ and
condition
\begin{equation*}
  C_N:=\{\phi\in\widehat{N}_\XX:\phi(\XX)\leq N\}\ \text{where}\ N\in\bbbn_0.
\end{equation*}
is called a {bounded (conditional) Poisson point process} with parameter $N$.




Gibbs point processes are repulsive point processes, especially in the sense
given by (\ref{repulsivenesspapangelou}), and were introduced in the field of
statistical physics \cite{ruelle_statistical_1969}. The repulsiveness of a Gibbs
point process appears naturally in the expression of its total potential energy
$U$, defined as

$$U(x_1,\dots,x_n)=\displaystyle\sum_{r=1}^n\displaystyle\sum_{1\le i_1<\dots<i_r\le n} \Psi_r(x_{i_1},\dots,x_{i_r}),$$

where $\Psi_r:\XX\to\bbbr_+$ is a measurable and symmetric function, called
{$r^{\text{th}}$-order interaction potential}, which quantifies the degree of
repulsion between $r$ given particles.

In a formal way, a point process on $\XX$ is said to be a {Gibbs point process}
with temperature parameter $\theta>0$ and total potential energy $U$ if its
Janossy function $j$ is given for any $\phi\in\widehat{\N}_\XX$ by

$$j(\phi)=C(\theta)e^{-\theta U(\phi)},$$

for some partition function $C(\theta)>0$.

In light of this classification, a Poisson point process $\Phi$ may be seen at
once as a Cox point process directed by a deterministic measure, a conditional
Poisson point process with condition $C={N}_\XX$; if $\Phi$ is finite, as a
purely random point process supported by a Poisson distribution, and as a Gibbs
point process such that its total potential energy $U$ equals to its
$1^{\text{st}}$-order interaction potential $\Psi_1$.

\subsection{$\alpha$-determinantal/permanantal point
  processes} \label{subsec_determinantal}


Another useful model for the distribution of particles with some repulsion
(called fermion particles in the literature) appears with determinantal point
processes, introduced by Macchi in 1975 \cite{macchi_coincidence_1975}, and
whose mathematical structure were studied in details by Soshnikov
\cite{soshnikov_determinantal_2000}, Shirai and Takahashi
\cite{shirai_random_2003}, then Hough et al. \cite{hough_determinantal_2006}. As
mentioned above, the repulsive behavior of the particles from such point
processes may be intuitively interpreted by observing the correlation function
and the Papangelou intensity. The correlation function $\rho$ of a determinantal
point process is defined as

\begin{equation}
  \rho(x_1,\dots,x_n)=\det(K(x_i,x_j))_{1\leq i,j\leq n},
\end{equation}

where $K$ is the kernel of a bounded symmetric and Hilbert-Schmidt integral
operator on $L^2(\XX,\ell)$, also denoted $K$, i.e., for any $x\in\XX$,

\begin{equation*}
  Kf(x)=\int_\XX K(x,y)f(y)\ell(\d y).
\end{equation*}

and we suppose that the map $K$ is an Hilbert-Schmidt operator from
$L^2(\XX,\ell)$ into $L^2(\XX,\ell)$ which satisfies the following conditions:

\begin{itemize}
 
\item The spectrum of $K$ is included in $[0,1)$.

\item The map $K$ is locally trace-class, i.e., for all compact
  $\Lambda\subset\XX$, the restriction $K_\Lambda=P_\Lambda K P_\Lambda$ of $K$
  to $L^2(\Lambda,\ell_{|\Lambda})$ is trace-class.

\end{itemize}

From this definition, as expected, we can show that
(\ref{repulsivenesscorrelation}) holds for determinantal point processes. Moreover, Georgii and Yoo
\cite{georgii_conditional_2005} provided an explicit expression for the
Papangelou intensity $c$ of such a process (see 
Section \ref{sec_papangelou}) and showed in particular that 
(\ref{repulsivenesspapangelou}) also holds.

%
%
%

It was actually shown \cite{shirai_random_2003} that determinantal point
processes (DPP for short) may be included in a wider class of point processes, called
$\alpha$-determinantal and permanantal point processes, where determinant is
replaced by $\alpha$-determinant and where the coefficient $\alpha$ provides an
indication on the repulsive or attractive nature of the point process. More
precisely, its particles exhibit repulsiveness as soon as $\alpha<0$ (in
particular $\alpha=-1$ corresponds to determinantal point process) and
attractiveness when $\alpha>0$. The attractive point processes of this last
category provide a model in statistical physics for the distribution of boson
particles. The case $\alpha=0$ leads to the Poisson point process, which
consolidates the idea expressed previously of being represented as a "zero"
point process. The reader may also consult Decreusefond et al.
\cite{camilier_quasi-invariance_2010,decreusefond_determinantal_2016} for
surveys and Lavancier et al. \cite{lavancier_determinantal_2015} for statistical
inference on determinantal point processes. It nevertheless can be worthwhile to
cite the following theorem which states the orthonormal decomposition of a
sufficiently regular integral operator.
\begin{theo}\label{DPPprop1} 

  Let $K:L^2(\XX,\ell)\to L^2(\XX,\ell)$ be a symmetric and Hilbert-Schmidt
  integral operator with kernel $K(\cdot,\cdot)$. Then, there exists a complete
  orthonormal basis $(h_n)_{n\in\bbbn}\subset L^2(\XX,\ell)$ and a decreasing
  sequence $(\lambda_n)_{n\in\bbbn}$ converging to $0$ such that, for any
  $x\in\XX$,

\begin{equation*}
  Kf(x)=\displaystyle\sum_{n=1}^{+\infty}\lambda_n \langle f,h_n \rangle h_n(x),
\end{equation*}

or equivalently, for any $x,y\in\XX$,

\begin{equation*}
  K(x,y)=\displaystyle\sum_{n=1}^{+\infty} \lambda_n h_n(x) h_n(y).
\end{equation*}

Since $K$ is Hilbert-Schmidt, $$\displaystyle\sum_{n=1}^{+\infty}
\lambda_n^2<+\infty,$$

and, if $K$ is trace-class, then $$\displaystyle\sum_{n=1}^{+\infty}
\lambda_n<+\infty.$$

Moreover, if for any $n\in\bbbn$, $1+\alpha\lambda_n\neq0$, then the integral
operator $J=(I+\alpha K)^{-1}K$ has a kernel $J$ given for any $x,y\in\XX$ by:

\begin{equation*}
  J(x,y)=\displaystyle\sum_{n=1}^{+\infty}\dfrac{\lambda_n}{1+\alpha\lambda_n}h_n(x)h_n(y).
\end{equation*}

\end{theo}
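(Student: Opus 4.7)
The plan is to invoke the spectral theorem for compact self-adjoint operators on the Hilbert space $L^2(\XX,\ell)$. Since $K$ is Hilbert--Schmidt, it is compact; since the kernel is real-valued and symmetric, $K$ is self-adjoint. The standard spectral theorem then produces a complete orthonormal basis $(h_n)_{n\in\bbbn}$ of eigenfunctions, with real eigenvalues accumulating only at $0$, which after relabeling form a decreasing sequence $(\lambda_n)$ tending to $0$. Applying $K$ term by term to the Fourier expansion $f=\sum_n \langle f,h_n\rangle h_n$, justified by continuity of $K$ on $L^2(\XX,\ell)$, yields the first stated identity for $Kf$.

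For the kernel expansion $K(x,y)=\sum_n\lambda_n h_n(x)h_n(y)$, I would work in $L^2(\XX\times\XX,\ell\otimes\ell)$, where the tensor products $(x,y)\mapsto h_m(x)h_n(y)$ form a complete orthonormal basis. The Fourier coefficient of the kernel against $h_m\otimes h_n$ is $\int K(x,y)h_m(x)h_n(y)\,\ell(\d x)\ell(\d y)=\langle Kh_n,h_m\rangle=\lambda_n\delta_{mn}$, so the expansion holds in $L^2(\XX^2)$, and Parseval yields $\|K\|_{HS}^{2}=\sum_n\lambda_n^2<+\infty$. When $K$ is trace-class, $\sum_n|\lambda_n|<+\infty$ follows from the definition of the trace norm applied to $(K^{*}K)^{1/2}$, whose singular values coincide with $|\lambda_n|$.

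For the last identity, diagonalization handles everything. Under the hypothesis $1+\alpha\lambda_n\neq 0$ for every $n$, each $h_n$ is an eigenvector of $I+\alpha K$ with nonzero eigenvalue $1+\alpha\lambda_n$. Since $\lambda_n\to 0$, the sequence $(1+\alpha\lambda_n)^{-1}$ is bounded, so the operator $R$ defined on the basis by $Rh_n=(1+\alpha\lambda_n)^{-1}h_n$ and extended by linearity and continuity is a bounded operator that inverts $I+\alpha K$ on $L^2(\XX,\ell)$. Composing with $K$ gives $Jh_n=\frac{\lambda_n}{1+\alpha\lambda_n}h_n$, and the same Parseval-type argument as above produces the claimed kernel formula, with convergence in $L^2(\XX^2)$ guaranteed by boundedness of the coefficients together with $\sum_n\lambda_n^{2}<+\infty$.

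The principal technical hurdle is the invertibility argument for $I+\alpha K$ as a bounded operator on the whole space: the pointwise nonvanishing condition ensures injectivity on each eigenspace, but one must also exclude $0$ from the continuous and residual spectrum. This is cleanly handled by the observation that $\alpha K$ is compact, so $I+\alpha K$ is Fredholm of index $0$; injectivity on the total family $(h_n)$, which spans $L^2(\XX,\ell)$, then forces surjectivity, and the open mapping theorem delivers bounded invertibility. The rest is bookkeeping.
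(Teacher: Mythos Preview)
Your argument is correct and is the standard route to this result: compactness of Hilbert--Schmidt operators, the spectral theorem for compact self-adjoint operators, the $L^2(\XX\times\XX)$ kernel expansion via the tensor basis, and the Fredholm alternative for $I+\alpha K$ are exactly the ingredients one expects.

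However, there is nothing to compare against: the paper does not prove this theorem. It is stated in Section~\ref{subsec_determinantal} as a background fact (``It nevertheless can be worthwhile to cite the following theorem\ldots'') and is used later, notably in the proof of Theorem~\ref{Supertheo3}, but no proof is supplied in Section~\ref{sec_proofs}. So your proposal is not an alternative to the paper's proof; it is a proof where the paper offers none.

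One small remark on your write-up: the theorem statement asserts the eigenvalue expansion of the kernel ``for any $x,y\in\XX$'', which is stronger than the $L^2(\XX^2)$ convergence you establish via Parseval. Pointwise or uniform convergence requires additional continuity or positivity assumptions (Mercer-type conditions), which the theorem does not state explicitly. Your $L^2$ argument is the honest one and suffices for every application made in the paper, so this is a quibble with the statement rather than with your proof.
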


Focus now on transformations on $\alpha$-DPPs and fix an $\alpha$-DPP
$\Phi$ with kernel $K$. The reduction of $\Phi$ to a compact subset $\Lambda$ of
$\XX$ is also an $\alpha$-DPP whose kernel is given, for any $x,y\in\XX$, by:

\begin{equation}\label{reductiondeterminantal}
  K_\Lambda(x,y)=K(x,y)\textbf{1}_{\Lambda\times\Lambda}(x,y).
\end{equation}


If $\beta$ is a real number of $[0,1]$, then the $\beta$-thinning of $\Phi$ is
also an $\alpha$-DPP whose kernel is given, for any $x,y\in\XX$, by:

\begin{equation}\label{thinningdeterminantal}
  K_\beta(x,y)=\beta K(x,y).
\end{equation}


Supposing that $\XX=\bbbr^d$ and that $\epsilon$ is a positive real number, the
$\epsilon$-rescaling of $\Phi$ is also an $\alpha$-DPP whose kernel is given,
for any $x,y\in\bbbr^d$, by:

\begin{equation}\label{rescalingdeterminantal}
  K_\epsilon(x,y)=\dfrac{1}{\epsilon} K(\epsilon^{-\frac{1}{d}}x,\epsilon^{-\frac{1}{d}}y).
\end{equation}


Another important result concerns independent superpositions: For any
$n\in\bbbn$, a $(-1/n)$-determinantal point process with kernel $K$ is the
independent superposition of $n$ determinantal point processes with kernel
${n}^{-1}K$. Moreover, such a sequence of point processes converges in law to
a Poisson point process with intensity measure $K(x,x)\ell(\d x)$. All these
properties on transformations of $\alpha$-DPP may easily be verified by
calculations on the  Laplace transforms.


A key example of transformations of determinantal point processes is also given
by a parametric family of determinantal point processes on the complex space:
the class of $\beta$-Ginibre point processes, where the parameter $\beta$ is a
real number between $0$ and $1$.

The {Ginibre point process} (GPP) with intensity $\rho=\frac{\gamma}{\pi}$ (with
$\gamma>0$) is a determinantal point process on $\mathbb{C}$ whose kernel
$K_\gamma$ is given for any $x,y\in\mathbb{C}$ by:

\begin{equation*}
  K_\gamma(x,y)=\dfrac{\gamma}{\pi}e^{-\frac{\gamma}{2}(|x|^2+|y|^2)}e^{\gamma x\overline{y}}.
\end{equation*}

It was introduced in 1965 by Ginibre \cite{ginibre_statistical_1965} and may be
interpreted as a point process with Gaussian repulsion between its particles.
If $\beta$ is a real number between $0$ and $1$, the {$\beta$-Ginibre point
  process} ($\beta$-GPP) with intensity $\rho=\frac{\gamma}{\pi}$ is a
determinantal point process on $\mathbb{C}$ whose kernel $K_{\gamma,\beta}$ is
given for any~$x,y\in\mathbb{C}$ by:

\begin{equation*}
  K_{\gamma,\beta}(x,y)=\dfrac{\gamma}{\pi}e^{-\frac{\gamma}{2\beta}(|x|^2+|y|^2)}e^{\frac{\gamma}{\beta}x\overline{y}}.
\end{equation*}


A $\beta$-Ginibre point process may be built by combining two operations on a
Ginibre point process: a thinning with parameter $\beta$ (one keeps each point
independently with probability~$\beta$) then a rescaling with parameter
$\sqrt{\beta}$, such that we keep the same intensity. Hence, the parameter
$\beta$ provides an information concerning the degree of repulsiveness of the
point process: the smaller $\beta$ is, the less repulsive the $\beta$-Ginibre
point process is. Note that such a point process is not defined for $\beta>1$.

When $\beta$ tends to $0$, a $\beta$-Ginibre point process is close to a Poisson
point process. In this sense, a Poisson point process may be considered as a
$\beta$-Ginibre point process with $\beta=0$. The simulation of $\beta$-Ginibre
point processes is investigated in \cite{decreusefond_note_2015}. Among results
on this topic, see the article of Goldman about its Palm measure and
Voronoi tessellation \cite{goldman_palm_2010}, its link with random matrices
\cite{liu_universality_2016} and some applications to wireless networks
\cite{deng_ginibre_2014, gomez_case_2015}.


\begin{figure}[!h]
  \centering \includegraphics*[width=0.3\textwidth]{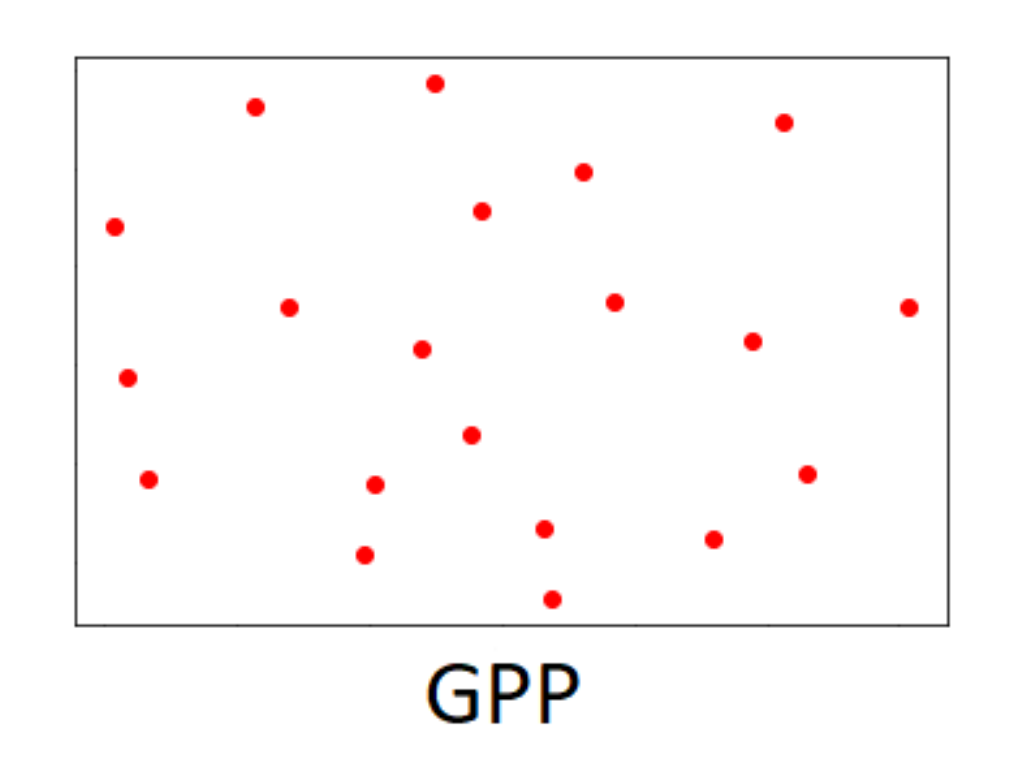}
  \includegraphics*[width=0.3\textwidth]{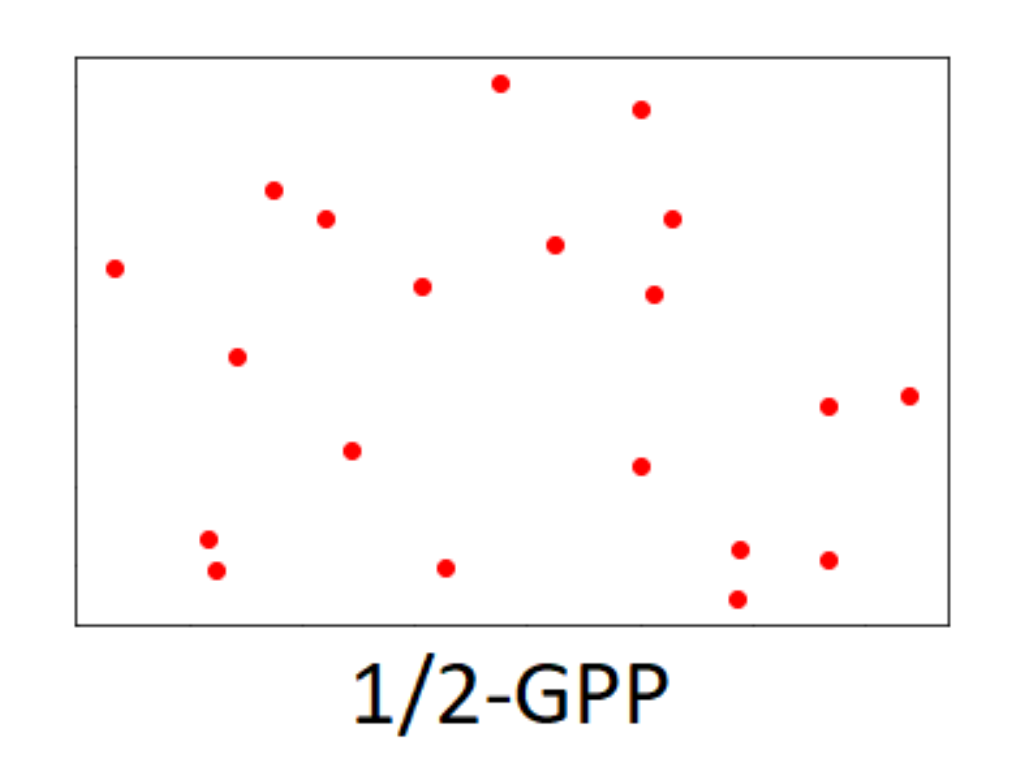}
  \includegraphics*[width=0.3\textwidth]{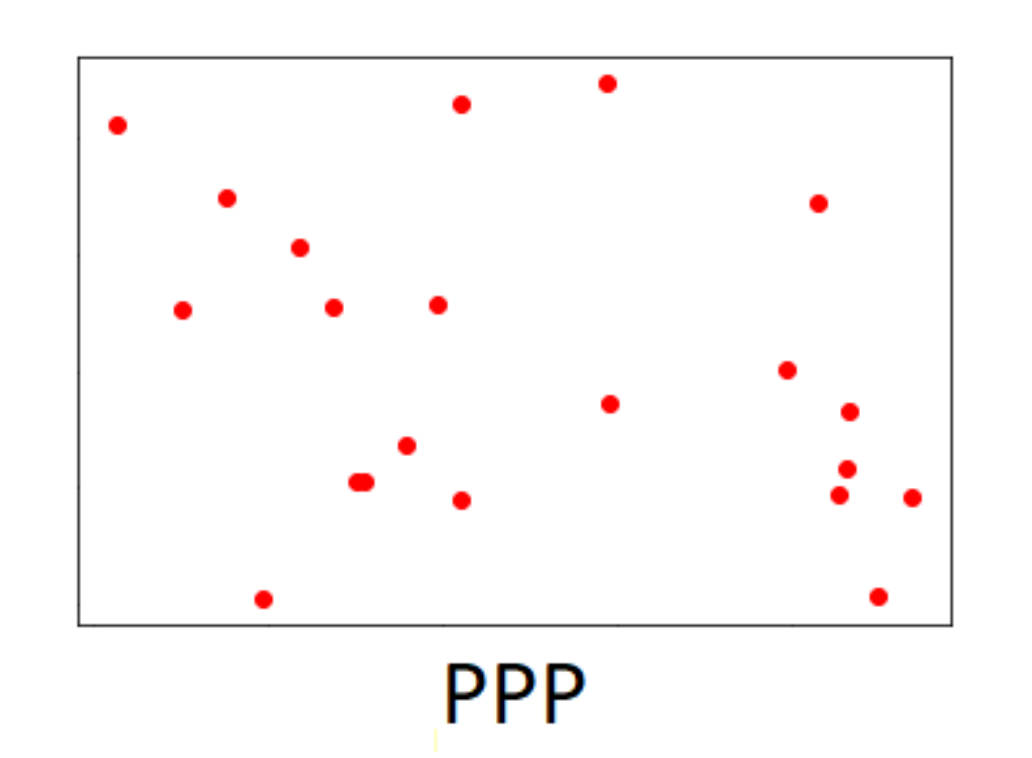}
	\caption{\label{betaginibre} Realizations of a Ginibre point process, a
    ($1/2$)-Ginibre point process and a Poisson point process.}
  \hfill
\end{figure}

Some realizations of a Ginibre point process, a ($1/2$)-Ginibre point process
and a Poisson point process are given in Figure~\ref{betaginibre}.
One can observe that the repulsiveness between the particles is indeed  weaker and
weaker as $\beta$ decreases and almost null as $\beta$ tend to~$0$.

\subsection{Convergence} \label{subsec_convergence}

The asymptotic evolution of a family of point processes can be viewed through
the prism of various topologies. The following definitions provides classical
notions of convergence on the space of point processes. A function $h:\bbbl\to\bbbr$ is said to be {$1$-Lipschitz}
according to a distance $\mmmd$ on a subset $\bbbl$ of $\bbbm$ if for any
$\nu_1,\nu_2\in\bbbl$,
$$
|h(\nu_1)-h(\nu_2)|\leq \mmmd(\nu_1,\nu_2),
$$
and $\Lip_1(\bbbl,\mmmd)$ denotes the set of all these maps which are
measurable. It may also be useful to recall here that a sequence
$(M_n)_{n\in\bbbn}\subset\bbbm$ {converges vaguely} to a measure $M\in\bbbm$ if,
for any positive function $f\in\Ccal_K(\XX,\bbbr)$,

\begin{equation*}
  \lim_{n\to+\infty}\int_\XX f(x)M_n(\d x)=\int_\XX f(x)M(\d x),
\end{equation*}

and that the space $\bbbm$ endowed with the vague convergence is a Polish space.
This enables to then define the most common type of convergence used on point
processes, which is convergence in law, that is, convergence for the Laplace
functionals. Formally, a sequence $(\Phi_n)_{n\in\bbbn}$ of point processes on
$\XX$ {converges in law} to a point process $\Phi$ on $\XX$ if, for any bounded
and continuous for the vague topology function $F:\N_\XX\to\bbbr$,

\begin{equation*}
  \lim_{n\to+\infty}\Esp[F(\Phi_n)]=\Esp [F(\Phi)].
\end{equation*}

Such a topology is metrizable and it is shown \cite{kallenberg_random_1983} that
the distance $\mmmd_P$, called here Polish distance, between two measures
$\nu_1$ and $\nu_2$ given by

$$\mmmd_P(\nu_1,\nu_2)=\displaystyle\sum_{k=1}^{+\infty}\dfrac{1}{2^k}\dfrac{|\langle f_k,\nu_1 \rangle - \langle f_k,\nu_2 \rangle|}{1+|\langle f_k,\nu_1 \rangle - \langle f_k,\nu_2 \rangle|},$$

where $f=(f_k)_{k\in\bbbn}$ is a sequence of functions generating $\Xcal$ and
where for any $x\in\bbbr_+$,

$$\Psi(x)=\dfrac{x}{1+x},$$
defines a metric for this topology and then provides a way to precise some
convergence rates. For this last definition, we can assume without loss of
generality that the Polish distance on $\bbbm'$ is chosen such that
$f\subset\Ccal_K(\bbbm)\cap\Lip_1(\mmmd_{TV})$.


A central question in the field of optimal transport is to determine for which
coupling between two random elements $X_1$ and $X_2$ the value of a given cost
function $\Delta$ is minimal. If $X_1$ and $X_2$ are some point processes and
the cost function $\Delta$ is a distance on the configuration space, the optimal
transport cost $\mmmd^*$, also called Kantorovich-Rubinstein distance, between
the probability distributions $\P_1$ and $\P_2$ of $X_1$ and $X_2$ is defined as

\begin{equation} \label{Kantodistance}
  \mmmd^*(\P_1,\P_2):=\inf_{\bC\in\Sigma(\P_1,\P_2)}\int_{\N_\XX\times\N_\XX}\mmmd(\omega_1,\omega_2)\bC(\d(\omega_1,\omega_2))
\end{equation}

where $\Sigma(\P_1,\P_2)$ denotes the set of probability measures on
$\bbbm\times\bbbm$ with first marginal $\P_1$ and second marginal $\P_2$. In
this case, there is at least one coupling $\bC\in\Sigma(\P_1,\P_2)$ for which
the infimum is attained \cite{villani_optimal_2008}. Moreover, if $\P_1$ and
$\P_2$ are concentrated on $\N_\XX$, then there is at least one coupling
$\bC\in\Sigma(\P_1,\P_2)$ for which the infimum is attained, and the Kantorovich
duality theorem (see e.g. \cite{villani_optimal_2008}) says that this minimum
equals

\begin{equation*}
  \mmmd^*(\P_1,\P_2)=\sup \Big|\int_{\N_\XX} F(\omega)  \P_1(\d\omega)  -\int_{\N_\XX} F(\omega)  \P_2(\d\omega) \Big|,
\end{equation*}

where the supremum is over all $F\in\Lip_1(N_\XX,\mmmd)$ that are integrable
with respect to $\P_1$ and ~$\P_2$. This distance associated to the total
variation distance provides a strong topology on point processes since it is
strictly finer than for the total variation distance
\cite{decreusefond_functional_2016}, and it is shown by Decreusefond et al. that
this Kantorovich-Rubinstein distance between finite Poisson point processes is
bounded by the total variation distance between its intensity measures
\cite{decreusefond_upper_2010}. It is also included in the larger class of
so-called Wasserstein distances, where a $L^p$-distance appears in
(\ref{Kantodistance}). Wasserstein distances were investigated in
\cite{barbour_steins_2006, decreusefond_wasserstein_2008, last_normal_2016} for
Poisson point processes, and more recently in \cite{del_moral_stability_2017} by
Del Moral and Tugaut for Kalman-Bucy filters, and in the field of persistent
homology in \cite{chazal_subsampling_2015} where Chazal et al. compare this
distances with so-called bottleneck, Hausdorff and Gromov-Hausdorff distances.
Other examples of metrics on point processes are proposed in
\cite{schulte_distances_2014} and \cite{schuhmacher_new_2008}.


In this work, the cost function $\mmmd$ denotes alternatively the {discrete
  distance} $\mmmd_D$ on $\bbbm$ defined for any $\nu_1,\nu_2\in\bbbm$ by
$$\mmmd_D(\nu_1,\nu_2):=\textbf{1}_{\{\nu_1\neq\nu_2\}}$$
or the {total variation distance} $\mmmd_{TV}$ on $\bbbm$ defined for any
$\nu_1,\nu_2\in\bbbm$ by
$$\mmmd_{TV}(\nu_1,\nu_2):=\sup_{\substack{{A\in\Xcal} \\ {\nu_1(A),\nu_2(A)<\infty}}}|\nu_1(A)-\nu_2(A)|.$$


Note that, for any $\nu_1,\nu_2\in\bbbm$,

\begin{equation*} \label{remdTV} \mmmd_{TV}(\nu_1,\nu_2)=\int_\XX\Big|\dfrac{\d
    \nu_1(x)}{\d(\nu_1+\nu_2)}-\dfrac{\d
    \nu_2(x)}{\d(\nu_1+\nu_2)}\Big|(\nu_1+\nu_2)(\d x)
\end{equation*}
and, in particular, if $\nu_1,\nu_2\in \widehat{N}_\XX$, then

\begin{equation*}
  \mmmd_{TV}(\nu_1,\nu_2)=|\nu_1\setminus\nu_2|+|\nu_2\setminus\nu_1|.
\end{equation*}

The existing links between the previous topologies may be summarized as follows.
For any distance $\mmmd$ on the space of point processes on $\XX$,
$\Tcal(\mmmd)$ denotes the topology associated to the corresponding metric
space. Then,

\begin{itemize}

\item ${\mmmd_P}_{|\bbbm_1'}$ and ${\mmmd_P}_{|N_\XX}^*$ both provide a metric
  for the convergence in law \cite{kallenberg_random_1983};

\item
  $\Tcal({\mmmd_P}_{|\bbbm_1'})=\Tcal({\mmmd_P}_{|N_\XX}^*)\subsetneq\Tcal({\mmmd_{TV}}_{|\bbbm_1'})\subsetneq\Tcal({\mmmd_{TV}}_{|N_\XX}^*)$
  \cite{decreusefond_functional_2016};

\item ${\mmmd_{TV}}_{|\bbbm_1'}={\mmmd_D}_{|N_\XX}^*\leq{\mmmd_{TV}}_{|N_\XX}^*$
  \cite{decreusefond_asymptotics_2017};

\item ${\mmmd_P}_{|\bbbm_1'}\leq{\mmmd_{TV}}_{|N_\XX}^*$.

\end{itemize}

\section{Stein's method for a finite Poisson process} \label{sec_steins_method}


This section aims to present Stein's method applied to a finite Poisson point
process. We use the Stein's principle and the construction given in
\cite{decreusefond_functional_2016}, but our proofs are sometimes different,
highlighting the properties of the thinning operation and the invariance of the
Poisson process distribution: for any Poisson point process $\Phi$ and
any~\mbox{$t\in[0,1]$},

\begin{equation*}
  t\circ\Phi^{(1)}+(1-t)\circ\Phi^{(2)}\overset{\Dcal}{=}\Phi,
\end{equation*}

where $\Phi^{(1)}$ and $\Phi^{(2)}$ are independent copies of $\Phi$.



For any $t\in\bbbr_+$, let $P_t$ be an operator on the space of measurable and
bounded functions $F:\widehat{N}_\XX\to\bbbr$. We say that the family
$(P_t)_{t\geq0}$ is a {semi-group} on $\widehat{N}_\XX$ if, for any
$t,s\in\bbbr_+$,

\begin{equation*}
  P_{t+s}=P_t\circ P_s,
\end{equation*}
and its {infinitesimal generator} $L$ is defined for any measurable and bounded
function $F:\widehat{N}_\XX\to\bbbr$ and any $\phi\in \widehat{N}_\XX$ such that
$t\mapsto P_tF(\phi)$ is differentiable in $0$ by:

\begin{equation*}
  LF(\phi)=\left.\frac{\d P_tF(\phi)}{\d t}\right|_{t=0}.
\end{equation*}


We focus now on a Poisson point process $\zeta_M$ with finite intensity measure
$M$, and associate to $\zeta_M$ its {Glauber process} $(G_t)_{t\geq0}$, defined
for any $t\in\bbbr_+$ and $\phi\in\widehat{N}_\XX$ by

\begin{equation*}
  G_t(\phi)=e^{-t}\circ\phi+(1-e^{-t})\circ\zeta_M.
\end{equation*}

For any $t\in\bbbr_+$, we consider the operator $P_t$ is defined for any
measurable and bounded function $F:\widehat{N}_\XX\to\bbbr$ and any $\phi\in
\widehat{N}_\XX$ by:

\begin{equation*}
  P_tF(\phi)=\Esp[F(G_t(\phi))]=\Esp[F(e^{-t}\circ\phi+(1-e^{-t})\circ\zeta)].
\end{equation*}


Its dynamics can be described as follows: imagine a homogeneous Poisson process
$\zeta_b$ on $\bbbr_+$ with intensity $M(\XX)$. The jump times of $\zeta_b$
determine the birth times of the particles in $\zeta$, placed in $\XX$ according
to the distribution $\frac{M(\cdot)}{M(\XX)}$. The lifetime of each particle is
exponentially distributed with parameter $1$.


\begin{theo}\label{bienunsemigroup}
  The family $(P_t)_{t\geq0}$ is a semi-group.
\end{theo}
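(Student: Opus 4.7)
The plan is to unfold the definition
$$P_tF(\phi) = \Esp\bigl[F(e^{-t}\circ\phi + (1-e^{-t})\circ\zeta_M)\bigr]$$
twice, and then use the invariance of the Poisson distribution under independent thinning and superposition (recalled just before the statement) to recombine the pieces into $P_{t+s}F(\phi)$.

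First I would fix $\phi\in\widehat{\N}_\XX$ and introduce independent Poisson copies $\zeta_M^{(1)},\zeta_M^{(2)}$ of $\zeta_M$, together with all their associated thinning Bernoulli trials, declared mutually independent. Setting $\psi := e^{-s}\circ\phi + (1-e^{-s})\circ\zeta_M^{(2)}$, the tower property gives
$$P_tP_sF(\phi) = \Esp\bigl[F\bigl(e^{-t}\circ\psi + (1-e^{-t})\circ\zeta_M^{(1)}\bigr)\bigr].$$
Next I would use two elementary properties of the thinning operation: thinning distributes over independent superposition, and two successive independent thinnings with retention probabilities $p$ and $q$ are in distribution a single thinning with retention probability $pq$. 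Applied to $e^{-t}\circ\psi$, this yields
$$e^{-t}\circ\psi \stackrel{d}{=} e^{-(t+s)}\circ\phi + e^{-t}(1-e^{-s})\circ\zeta_M^{(2)},$$
so that
$$P_tP_sF(\phi) = \Esp\bigl[F\bigl(e^{-(t+s)}\circ\phi + e^{-t}(1-e^{-s})\circ\zeta_M^{(2)} + (1-e^{-t})\circ\zeta_M^{(1)}\bigr)\bigr].$$

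The final step is to reassemble the Poisson piece. The thinning $e^{-t}(1-e^{-s})\circ\zeta_M^{(2)}$ is a Poisson point process with intensity $e^{-t}(1-e^{-s})M$, and $(1-e^{-t})\circ\zeta_M^{(1)}$ is independent and Poisson with intensity $(1-e^{-t})M$. Their independent superposition is therefore Poisson with intensity
$$\bigl[e^{-t}(1-e^{-s}) + (1-e^{-t})\bigr]M = \bigl(1-e^{-(t+s)}\bigr)M,$$
which has the same distribution as $(1-e^{-(t+s)})\circ\zeta_M$ and is independent of the thinning of $\phi$. Substituting back gives
$$P_tP_sF(\phi) = \Esp\bigl[F\bigl(e^{-(t+s)}\circ\phi + (1-e^{-(t+s)})\circ\zeta_M\bigr)\bigr] = P_{t+s}F(\phi).$$

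There is no substantive obstacle; the whole argument amounts to bookkeeping of independent Bernoulli trials and independent Poisson copies. The only care needed is to make explicit that all the thinning operations are performed with fresh independent trials, so that the composition identity used in the second step is genuinely a distributional equality and the superposition of the two thinned Poisson processes really is a single Poisson process of the advertised intensity.
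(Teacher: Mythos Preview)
Your proof is correct and follows essentially the same route as the paper: expand the composition using two independent Poisson copies, apply the associativity/composition of thinnings to isolate $e^{-(t+s)}\circ\phi$, and then invoke the Poisson invariance property to merge the two thinned Poisson pieces into a single $(1-e^{-(t+s)})\circ\zeta_M$. The paper's version is slightly terser about the independence bookkeeping you spell out, but the argument is the same.
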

Note that, in the previous proof, we only use associativity of thinning and the
invariance property of a Poisson point process distribution.
According to the Stein's approach, this Glauber process verifies as expected the
following properties: on one hand it is invariant for the Poisson point process
$\zeta$, on the other hand it is ergodic, which is summarized in the following
theorem.

\begin{lem}\label{ergodic}

  For any $t\geq0$,

\begin{equation*}
  G_t(\zeta)\stackrel{\Dcal}{=}\zeta.
\end{equation*}

For any $F\in\Lip_1(\widehat{N}_\XX,\mmmd_{TV})$ and any $\phi\in
\widehat{N}_\XX$,

\begin{equation*}
  \displaystyle\lim_{t\to+\infty}P_tF(\phi)=\Esp[F(\zeta)].
\end{equation*}

\end{lem}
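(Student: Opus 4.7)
The first assertion is a one-line consequence of the invariance property of the Poisson distribution recalled at the beginning of the section. By definition, $G_t(\zeta) = e^{-t} \circ \zeta + (1-e^{-t}) \circ \zeta'$ where $\zeta'$ is an independent copy of $\zeta$ (the auxiliary Poisson process appearing in the definition of $G_t$), and applying the invariance with parameter $e^{-t} \in [0,1]$ gives $G_t(\zeta) \stackrel{\Dcal}{=} \zeta$.

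For the convergence, the strategy is to exploit the representation of $\zeta$ obtained from the first assertion in order to realize both $P_t F(\phi)$ and $\Esp[F(\zeta)]$ as expectations of $F$ evaluated on configurations sharing a common summand $(1-e^{-t}) \circ \zeta'$. Concretely, taking two independent copies $\zeta', \zeta''$ of $\zeta$, the first assertion yields $\Esp[F(\zeta)] = \Esp[F(e^{-t} \circ \zeta'' + (1-e^{-t}) \circ \zeta')]$, while $P_t F(\phi) = \Esp[F(e^{-t} \circ \phi + (1-e^{-t}) \circ \zeta')]$. The Lipschitz assumption on $F$ then bounds the difference by
\begin{equation*}
|P_t F(\phi) - \Esp[F(\zeta)]| \leq \Esp\bigl[\mmmd_{TV}(e^{-t} \circ \phi + (1-e^{-t}) \circ \zeta',\ e^{-t} \circ \zeta'' + (1-e^{-t}) \circ \zeta')\bigr].
\end{equation*}

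The key observation is that, for any finite configurations $\omega, \omega', \eta$, one has $\mmmd_{TV}(\omega + \eta, \omega' + \eta) \leq |\omega| + |\omega'|$, which is immediate from the triangle inequality pivoting through $\eta$ together with $\mmmd_{TV}(\nu + \eta, \eta) = |\nu|$. Combined with the elementary identities $\Esp[|e^{-t} \circ \phi|] = e^{-t} |\phi|$ and $\Esp[|e^{-t} \circ \zeta''|] = e^{-t} M(\XX)$ (both a direct consequence of the independent thinning), this yields
\begin{equation*}
|P_t F(\phi) - \Esp[F(\zeta)]| \leq e^{-t}\bigl(|\phi| + M(\XX)\bigr),
\end{equation*}
which tends to $0$ as $t \to +\infty$ since $M$ is a finite measure and $\phi$ has finitely many points. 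The argument is quite direct; the only mild subtlety is recognizing that the correct coupling shares the Poisson summand $(1-e^{-t}) \circ \zeta'$ rather than attempting to match $\phi$ directly to a realization of $\zeta$.
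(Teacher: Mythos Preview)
Your proof is correct and follows essentially the same route as the paper. The paper pivots explicitly through $P_tF(\varnothing)$, bounding $|P_tF(\phi)-P_tF(\varnothing)|\le e^{-t}|\phi|$ and $|P_tF(\varnothing)-\Esp[F(\zeta)]|\le e^{-t}M(\XX)$ separately; you achieve the identical bound $e^{-t}(|\phi|+M(\XX))$ in one step by coupling on the shared summand $(1-e^{-t})\circ\zeta'$ and applying the triangle inequality inside $\mmmd_{TV}$, which is the same pivot carried out at the level of configurations rather than expectations.
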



In order to analyze the variations of any given functional of a configuration or
a point process, we need to introduce the following operator. The {gradient}
$D_x$ in direction $x\in\XX$ is defined, for any measurable function
$F:N_\XX\to\bbbr$ and any $\phi\in N_\XX$, by

\begin{equation*}
  D_x F(\phi)=F(\phi+x)-F(\phi).
\end{equation*}

Its association with the Poisson point process $\zeta$ and its semi-group
presents some relevant characteristics. One of its main advantages is the
closability property provided by the following theorem.

\begin{theo}\label{gradientprop0}

  If $F,G:N_\XX\to\bbbr$ are two measurable and bounded functions such that
  $F(\phi)=G(\phi)$ $\P_{\zeta}(\d\phi)$-a.s., then

\begin{equation*}
  D_x F(\phi)=D_x G(\phi)\ (M\otimes\P_{\zeta})(\d x,\d\phi)\text{-a.s.}.
\end{equation*}

\end{theo}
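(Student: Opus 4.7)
The plan is to introduce the difference $H = F - G$, a bounded measurable function on $N_\XX$ satisfying $H(\phi) = 0$ for $\P_{\zeta}$-a.e.\ $\phi$, and to show that the gradient $D_x H(\phi) = H(\phi+x) - H(\phi)$ vanishes $(M \otimes \P_{\zeta})$-almost everywhere. The second term $H(\phi)$ is harmless: since it vanishes for $\P_{\zeta}$-a.e.\ $\phi$ and the second marginal of $M \otimes \P_{\zeta}$ is $\P_{\zeta}$, it also vanishes for $(M \otimes \P_{\zeta})$-a.e.\ $(x,\phi)$. So the real work is to prove the analogous statement for the shifted term $H(\phi + x)$.

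The natural tool for this is the Mecke formula for the Poisson process $\zeta = \zeta_M$, recalled earlier. Applying it to the nonnegative measurable function $u(x, \phi) = |H(\phi + x)|$ gives
\begin{equation*}
\int_\XX \Esp\bigl[|H(\zeta + x)|\bigr]\, M(\d x) = \Esp\Bigl[\sum_{x \in \zeta} |H((\zeta \setminus x) + x)|\Bigr].
\end{equation*}
The key observation is that for $x \in \zeta$ one has $(\zeta \setminus x) + x = \zeta$, so the summand no longer depends on $x$, and the right-hand side collapses to $\Esp[\zeta(\XX)\, |H(\zeta)|]$. Since $H(\zeta) = 0$ almost surely, this expectation is zero, and hence $H(\phi + x) = 0$ for $(M \otimes \P_{\zeta})$-a.e.\ $(x,\phi)$.

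Combining the two steps, $D_x H(\phi) = 0$ for $(M \otimes \P_{\zeta})$-a.e.\ $(x,\phi)$, which gives $D_x F = D_x G$ in the required sense. I do not expect a genuine obstacle: the argument hinges entirely on choosing $u(x,\phi) = |H(\phi+x)|$ in Mecke's formula so that the configuration-sum telescopes to $\zeta(\XX)\, |H(\zeta)|$. Boundedness of $F$ and $G$ together with the finiteness of $M(\XX)$ ensures that every quantity above is finite, but even without such integrability the Mecke identity holds in $[0,+\infty]$ for nonnegative integrands, so no additional technicality is needed.
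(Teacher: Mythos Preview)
Your proof is correct and rests on the same key ingredient as the paper's: the Mecke formula for $\zeta$. The execution differs slightly. The paper tests $D_xF$ against an \emph{arbitrary} nonnegative $u$, deriving
\[
\int_{\XX}\Esp[D_x F(\zeta)\,u(x,\zeta)]\,M(\d x)=\Esp\Big[F(\zeta)\Big(\sum_{x\in\zeta}u(x,\zeta\setminus x)-\int_{\XX}u(x,\zeta)\,M(\d x)\Big)\Big],
\]
and then invokes duality: if $F=0$ $\P_\zeta$-a.s., the right-hand side vanishes for every $u$, forcing $D_xF=0$ $(M\otimes\P_\zeta)$-a.e. You instead split $D_xH$ into $H(\phi+x)$ and $H(\phi)$, handle the second term trivially, and dispatch the first by the \emph{specific} choice $u(x,\phi)=|H(\phi+x)|$, which collapses the configuration sum to $\zeta(\XX)\,|H(\zeta)|$. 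Your route is a touch more direct and avoids the duality step; the paper's formulation has the advantage of yielding a general integration-by-parts identity along the way. Neither approach requires anything beyond Mecke and the hypothesis $H(\zeta)=0$ a.s.
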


The gradient, thus defined, also appears in the expression of the infinitesimal
generator associated to the semi-group $(P_t)_{t\geq0}$, given in the next
theorem.

\begin{theo}\label{generator}

  The infinitesimal generator $L$ associated to $(P_t)_{t\geq0}$ is given for
  any measurable and bounded function $F:\widehat{N}_\XX\to\bbbr$ and any
  $\phi\in \widehat{N}_\XX$ by

\begin{equation*}
  LF(\phi)=\int_{\XX} D_x F(\phi)M(\d x)+\displaystyle\sum_{y\in\phi} (F(\phi\setminus y)-F(\phi)).
\end{equation*}

Moreover, a point process $\Phi$ is a Poisson point process with intensity
measure $M$ if and only if, for any measurable and bounded function
$F:\widehat{N}_\XX\to\bbbr$,

\begin{equation*}
  \Esp[LF(\Phi)]=0.
\end{equation*}

\end{theo}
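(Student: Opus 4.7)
The plan splits along the two assertions. To identify $L$, I would Taylor-expand $P_t F(\phi) = \Esp[F(e^{-t}\circ\phi + (1-e^{-t})\circ\zeta_M)]$ to first order in $t$ near $0$. By the independent-thinning stability of the Poisson process, $(1-e^{-t})\circ\zeta_M$ is Poisson with intensity $(1-e^{-t})M = tM + o(t)$, while in the thinning $e^{-t}\circ\phi$ each point of $\phi$ is independently removed with probability $1-e^{-t} = t + o(t)$. Only two single-event contributions survive to order $t$: exactly one death of a point $y\in\phi$ (no new point added), yielding $\sum_{y\in\phi}(F(\phi\setminus y) - F(\phi))$, and exactly one birth at $x\in\XX$ (no point removed), yielding $\int_\XX (F(\phi+x) - F(\phi))\,M(\d x)$. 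Since $|\phi|$ and $M(\XX)$ are finite and $F$ is bounded, multi-mutation events contribute $O(t^2)$. This produces the claimed formula for $LF(\phi)$.

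For the forward direction of the characterization, if $\Phi=\zeta_M$ then I apply the Mecke formula recalled in Section~\ref{subsec_poisson_based} with test function $u(x,\phi) = F(\phi) - F(\phi+x)$ to obtain
$$
\Esp\Bigl[\sum_{y\in\Phi}(F(\Phi\setminus y) - F(\Phi))\Bigr] = -\int_\XX \Esp[F(\Phi+x) - F(\Phi)]\, M(\d x),
$$
so the two summands of $\Esp[LF(\Phi)]$ cancel exactly.

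For the converse, I assume $\Esp[LG(\Phi)] = 0$ for every bounded measurable $G$. Applying this with $G = P_s F$ (bounded since $P_s$ is a contraction in the sup-norm) gives $\frac{\d}{\d s}\Esp[P_s F(\Phi)] = \Esp[LP_s F(\Phi)] = 0$, so $s \mapsto \Esp[P_s F(\Phi)]$ is constant. Letting $s\to+\infty$ and invoking the ergodicity statement of Lemma~\ref{ergodic} together with bounded convergence yields $\Esp[F(\Phi)] = \Esp[F(\zeta_M)]$ for every bounded measurable $F$, hence $\Phi \stackrel{\Dcal}{=} \zeta_M$.

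The main obstacle is the remainder control in the first step: one must rigorously justify that events involving two or more births and/or deaths contribute $O(t^2)$ for each fixed $\phi$, using $(1-e^{-t})^k = O(t^k)$ and standard Poisson tail estimates. Writing $P_t F(\phi)$ as the (absolutely convergent) double series over subsets of $\phi$ and Poisson-point counts, this reduces to term-by-term differentiation dominated by $\|F\|_\infty\, e^{(1-e^{-t})M(\XX)}$, so is routine but is the only non-trivial analytic input.
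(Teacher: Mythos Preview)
Your identification of $L$ is exactly the paper's computation: decompose $\Esp[F(e^{-t}\circ\phi+(1-e^{-t})\circ\zeta_M)]$ according to the events ``no change'', ``exactly one death'', ``exactly one birth'', and ``at least two mutations'', compute the $t$-derivatives of the corresponding probabilities at $0$, and observe the remainder is $o(t)$. The forward direction of the characterization via the Mecke formula is also the paper's argument.

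For the converse, however, you take a genuinely different route. The paper invokes directly the converse of the Mecke formula (stated in Section~\ref{subsec_poisson_based}): the identity $\Esp[LF(\Phi)]=0$ for all bounded $F$ is exactly the Mecke identity, and Mecke's characterization then forces $\Phi$ to be Poisson$(M)$. Your argument instead runs the semigroup: from $\Esp[LP_sF(\Phi)]=0$ you get that $s\mapsto\Esp[P_sF(\Phi)]$ is constant, and ergodicity (Lemma~\ref{ergodic}) identifies the limit as $\Esp[F(\zeta_M)]$. This is a perfectly valid and arguably more self-contained alternative, since it avoids importing the Mecke characterization as a black box and instead reuses machinery already built in the section. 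Two small points to tighten: Lemma~\ref{ergodic} is stated only for $F\in\Lip_1(\widehat N_\XX,\mmmd_{TV})$, so either restrict to such $F$ (which still separates distributions) or note that the ergodic limit extends to all bounded measurable $F$; and the interchange $\frac{\d}{\d s}\Esp[P_sF(\Phi)]=\Esp[LP_sF(\Phi)]$ requires a domination argument, for which you need $\Esp[|\Phi|]<\infty$ --- this follows from the hypothesis by testing against suitable bounded $F$, but should be mentioned.
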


Another essential property of the gradient, still when it is coupled with the
semi-group $(P_t)_{t\geq0}$, is the following commutation relation.

\begin{lem}\label{semigroupgradientcom}

  For any $t\in\bbbr_+$, any $x\in\XX$, any measurable and bounded function
  $F:\widehat{N}_\XX\to\bbbr$ and any $\phi\in \widehat{N}_\XX$,

\begin{equation*}
  D_x P_tF(\phi)=e^{-t}P_tD_{x}F(\phi).
\end{equation*}

\end{lem}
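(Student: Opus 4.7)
The plan is to unwind the definition of $D_x$ and $P_t$ and exploit the fact that independent thinning acts point-by-point independently. Starting from
\begin{equation*}
D_x P_tF(\phi) \;=\; P_tF(\phi+x) - P_tF(\phi) \;=\; \Esp\bigl[F(e^{-t}\!\circ\!(\phi+x) + (1-e^{-t})\!\circ\!\zeta)\bigr] - P_tF(\phi),
\end{equation*}
I would decompose the thinning $e^{-t}\!\circ\!(\phi+x)$ as $e^{-t}\!\circ\!\phi + \chi\,\delta_x$, where $\chi$ is a Bernoulli variable with parameter $e^{-t}$ that is independent of the thinning of $\phi$ and of $\zeta$. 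This is simply the observation that each point of $\phi+x$ survives independently with probability $e^{-t}$, so the extra atom at $x$ can be split off from the thinning of $\phi$.

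Next, I would condition on the value of $\chi$. On the event $\{\chi=1\}$, the configuration inside $F$ is $e^{-t}\!\circ\!\phi + \delta_x + (1-e^{-t})\!\circ\!\zeta$, which is exactly the argument of $P_tF$ at $\phi$ augmented by the point $x$; on the event $\{\chi=0\}$, it is the argument of $P_tF(\phi)$. Therefore
\begin{equation*}
P_tF(\phi+x) \;=\; e^{-t}\,\Esp\bigl[F(e^{-t}\!\circ\!\phi + (1-e^{-t})\!\circ\!\zeta + x)\bigr] + (1-e^{-t})\,P_tF(\phi).
\end{equation*}
Recognising the first expectation as $P_t\bigl[F(\,\cdot + x)\bigr](\phi)$ and writing $F(\,\cdot+x)=F+D_xF$, this becomes $e^{-t}P_tF(\phi) + e^{-t}P_tD_xF(\phi) + (1-e^{-t})P_tF(\phi)$, so that after subtracting $P_tF(\phi)$ the $P_tF(\phi)$ terms cancel and only $e^{-t}P_tD_xF(\phi)$ survives.

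The main (and only) subtle point is the independence used in the splitting $e^{-t}\!\circ\!(\phi+x) = e^{-t}\!\circ\!\phi + \chi\,\delta_x$: since $\phi$ is a fixed deterministic configuration inside $P_tF(\phi)$, this is the defining property of independent thinning and needs no further justification beyond checking that the Bernoulli coin at $x$ is independent of everything used to build $e^{-t}\!\circ\!\phi$ and of the Poisson process $\zeta$. Once that is in place, the computation is purely algebraic and yields the stated identity. Measurability and boundedness of $F$ ensure that Fubini may be used implicitly when conditioning on $\chi$.
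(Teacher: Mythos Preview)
Your proof is correct and follows essentially the same route as the paper's: both start from the definition of $D_x$ and $P_t$, use the distributivity of independent thinning over the sum $\phi+x$ to split off a Bernoulli$(e^{-t})$ coin at $x$, and then condition on whether that coin retains $x$ or not. The paper compresses the last two steps into a single line, whereas you spell out the conditioning on $\chi$ and the cancellation of the $P_tF(\phi)$ terms explicitly, but the argument is the same.
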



This approach using a generator provides a solution to the Stein's equation in
the following result, the so-called Stein-Dirichlet representation formula. This
theorem is only based on the definitions of semi-group and infinitesimal
generator, together with the ergodic property.

\begin{theo}\label{SDR}

  For any $F\in\Lip_1(\widehat{N}_\XX,\mmmd_{TV})$ and any $\phi\in
  \widehat{N}_\XX$,

\begin{equation*}
  \Esp[F(\zeta)]-F(\phi)=\int_0^{+\infty} LP_sF(\phi)\d s.
\end{equation*}

\end{theo}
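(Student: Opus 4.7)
The plan is to start from the defining relation between a semi-group and its infinitesimal generator, integrate it in time, and then let the time horizon tend to infinity using the ergodicity provided by Lemma \ref{ergodic}. Concretely, the standard fact that $\frac{\d}{\d s} P_s F(\phi) = L P_s F(\phi)$ (which follows from the semi-group property $P_{s+h} = P_h P_s$ and the definition of $L$) yields, after integration between $0$ and $T$, the identity
\begin{equation*}
  P_T F(\phi) - F(\phi) = \int_0^T L P_s F(\phi)\, \d s,
\end{equation*}
since $P_0 F(\phi) = F(\phi)$. Letting $T \to +\infty$ and invoking the ergodic statement in Lemma \ref{ergodic}, the left-hand side converges to $\Esp[F(\zeta)] - F(\phi)$, which is exactly the desired formula, provided the integral on the right-hand side converges and the limit/integral exchange is justified.

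The main obstacle is therefore the integrability of $s \mapsto L P_s F(\phi)$ on $[0,+\infty)$. Here I would use the explicit form of $L$ given by Theorem \ref{generator}, namely
\begin{equation*}
  L P_s F(\phi) = \int_{\XX} D_x P_s F(\phi)\, M(\d x) + \sum_{y \in \phi}\bigl(P_s F(\phi \setminus y) - P_s F(\phi)\bigr),
\end{equation*}
together with the commutation relation of Lemma \ref{semigroupgradientcom}: $D_x P_s F(\phi) = e^{-s} P_s D_x F(\phi)$. Since $F \in \Lip_1(\widehat{N}_\XX, \mmmd_{TV})$, one has $|D_x F| \leq 1$ pointwise, hence $|P_s D_x F| \leq 1$, and consequently $|D_x P_s F(\phi)| \leq e^{-s}$. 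Rewriting the second sum as $-\sum_{y\in\phi} D_y P_s F(\phi\setminus y)$ and applying the same bound gives $|L P_s F(\phi)| \leq e^{-s}(M(\XX) + \phi(\XX))$, which is integrable in $s$ over $[0,+\infty)$ since $\phi$ is a finite configuration and $M$ is finite.

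With this uniform-in-$T$ bound in hand, dominated convergence legitimates passing to the limit $T \to +\infty$ under the integral, and the proof is complete. The only delicate points I anticipate are verifying the differentiation-under-the-semi-group step for possibly only bounded measurable $F$ (which is addressed by first approximating and using the closability from Theorem \ref{gradientprop0} if needed) and checking that the Glauber dynamics, whose birth mechanism is driven by a Poisson process of rate $M(\XX)$ and whose particles die independently at rate $1$, produces only finitely many transitions in $[0,T]$ almost surely, so that the pathwise derivative of $s \mapsto P_s F(\phi)$ indeed coincides with $L P_s F(\phi)$.
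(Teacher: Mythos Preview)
Your proposal is correct and follows essentially the same route as the paper: both use the semi-group identity $\frac{\d}{\d s}P_sF(\phi)=LP_sF(\phi)$, integrate in $s$, and invoke Lemma~\ref{ergodic} to identify the limit. Your version is in fact slightly more careful, since you justify the integrability of $s\mapsto LP_sF(\phi)$ via the explicit bound $|LP_sF(\phi)|\le e^{-s}(M(\XX)+\phi(\XX))$ obtained from Theorem~\ref{generator} and Lemma~\ref{semigroupgradientcom}, whereas the paper simply applies the fundamental theorem of calculus without making this estimate explicit.
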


In \cite{barbour_steins_1992}, Barbour and Brown apply Stein's method to Poisson
point process. They deduce an upper bound for the total variation distance
between a finite Poisson point process and another finite point process using
Palm measure, given by the following inequality: If $\Phi$ is a point process of
intensity $M$,
\begin{equation*}
  \mmmd_{TV}^*(\Phi,\zeta_{M})\leq\int_{\XX}\Esp[\|\Phi- \Phi_{x}\backslash x\|_{\text{TV}}]M(\d x)
\end{equation*}
where $\Phi_{x}$ is a point process constructed on the same probability as
$\Phi$, which has the distribution of the Palm measure of $\Phi$ at $x$.

Our approach, which leads to the following fundamental theorem, focuses on
Papangelou intensity rather than Palm measure - a functional rather than a
probability measure - in order to give an easier way to perform calculations.

\begin{theo}\label{Steinprop} 

  If $\zeta_{M}$ is a Poisson point process on $\XX$ with finite intensity measure
  $M(\d x)=m(x)\ell(\d x)$ and $\Phi$ is a second finite point process on $\XX$
  with Papangelou intensity $c$, then

\begin{equation*}
  \mmmd_{TV}^*(\Phi,\zeta_{M})\leq \int_\XX \Esp[|m(x)-c(x,\Phi)|] \ell(\d x).
\end{equation*}

\end{theo}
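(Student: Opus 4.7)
The plan is to combine the Stein-Dirichlet representation formula (Theorem \ref{SDR}) with the Kantorovich duality for $\mmmd_{TV}^*$ and exploit the Papangelou intensity of $\Phi$ to rewrite the generator term. By Kantorovich duality, it suffices to bound $|\Esp[F(\zeta_M)]-\Esp[F(\Phi)]|$ uniformly over $F\in \Lip_1(\widehat{N}_\XX,\mmmd_{TV})$. Substituting $\phi=\Phi$ in Theorem \ref{SDR} and taking expectation gives
\begin{equation*}
  \Esp[F(\zeta_M)]-\Esp[F(\Phi)]=\int_0^{+\infty}\Esp[LP_sF(\Phi)]\,\d s,
\end{equation*}
so the whole task reduces to bounding $|\Esp[LP_sF(\Phi)]|$.

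The key algebraic step is to rewrite both terms in Theorem \ref{generator} as integrals against $\ell$ with the same integrand $D_x P_s F(\Phi)$. The first term already has this form with weight $m(x)$. For the second term, I would observe that $P_sF(\Phi\setminus y)-P_sF(\Phi)=-D_y P_sF(\Phi\setminus y)$ when $y\in\Phi$, define $u(y,\phi):=-D_y P_sF(\phi)$, and apply the defining property of the Papangelou intensity $c$ (the GNZ-type identity recalled after the reduced Campbell measure) to obtain
\begin{equation*}
  \Esp\Bigl[\sum_{y\in\Phi}(P_sF(\Phi\setminus y)-P_sF(\Phi))\Bigr]=-\int_{\XX}\Esp[c(x,\Phi)\,D_xP_sF(\Phi)]\,\ell(\d x).
\end{equation*}
Combining the two terms yields
\begin{equation*}
  \Esp[LP_sF(\Phi)]=\int_{\XX}\Esp\bigl[(m(x)-c(x,\Phi))\,D_xP_sF(\Phi)\bigr]\,\ell(\d x).
\end{equation*}

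To control the pointwise factor $D_xP_sF(\Phi)$, I would invoke the commutation relation from Lemma \ref{semigroupgradientcom}, which gives $D_xP_sF=e^{-s}P_sD_xF$. Since $F$ is $1$-Lipschitz for $\mmmd_{TV}$ and $\mmmd_{TV}(\phi+x,\phi)=1$, we have $|D_xF|\leq 1$ everywhere, so $|P_sD_xF|\leq 1$ and consequently $|D_xP_sF(\Phi)|\leq e^{-s}$. Plugging this in, taking absolute values inside the integral, and integrating $\int_0^{+\infty}e^{-s}\,\d s=1$ over $s$ delivers
\begin{equation*}
  |\Esp[F(\zeta_M)]-\Esp[F(\Phi)]|\leq \int_{\XX}\Esp[|m(x)-c(x,\Phi)|]\,\ell(\d x),
\end{equation*}
and the supremum over $F\in\Lip_1$ concludes.

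The main obstacle I anticipate is purely technical: justifying the use of Fubini to swap $\Esp$, the sum over $y\in\Phi$, the integral over $s$, and the $\ell$-integral, and checking that $P_sF$ is measurable and integrable enough to legitimately apply the Papangelou identity and Theorem \ref{SDR}. Because $M$ is finite and $\Phi$ is a finite point process, this reduces to verifying integrability of $|m(x)-c(x,\Phi)|$, which is exactly the hypothesis implicit in the right-hand side being finite; no deeper difficulty is expected.
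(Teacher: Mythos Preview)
Your proposal is correct and follows essentially the same route as the paper's own proof: apply Theorem~\ref{SDR}, expand $LP_sF$ via Theorem~\ref{generator}, convert the sum over $y\in\Phi$ into an $\ell$-integral using the Papangelou identity, combine to get $\int_\XX\Esp[(m(x)-c(x,\Phi))D_xP_sF(\Phi)]\,\ell(\d x)$, then use Lemma~\ref{semigroupgradientcom} together with $|D_xF|\leq 1$ and $\|P_s\|\leq 1$ to extract the factor $e^{-s}$ and integrate in $s$. The paper does not dwell on the Fubini-type justifications you flag, but your assessment that finiteness of $M$ and of $\Phi$ suffices is in line with the level of rigor adopted there.
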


The proof of this fundamental result synthesizes the main results which are
exposed previously: from the Stein-Dirichlet representation formula, it becomes
possible to apply successively the expression of the generator $L$ and the
definition of the Papangelou intensity. The expected upperbound is then obtained
using the outstanding properties of the gradient.

\section{Papangelou intensity and repulsiveness} \label{sec_papangelou}


Following Georgii and Yoo \cite{georgii_conditional_2005}, we define
repulsiveness according to the Papangelou intensity: a point process $\Phi$ on
$\XX$ with a version $c$ of its Papangelou intensity is said to be {repulsive}
(according to $c$) if, for any $\omega,\phi\in\N_\XX$ such that
$\omega\subset\phi$ and any $x\in\XX$,

\begin{equation} \label{repulsive} c(x,\phi)\leq c(x,\omega).
\end{equation}

Such a definition is in particular verified for Gibbs and determinantal point
processes, which belong to the main categories of point processes used in order
to model repulsiveness. The relevance of this definition also appears when
considering the intuitive approach of the Papangelou intensity: the probability
of finding a particle in the location $x\in\XX$ given the configuration $\phi$
included in $\Phi$ is as weak as the number of particles in $\phi$ is high.
However, this definition may be considered as quite restrictive and it seems to
be sufficient to consider point processes with larger assumptions: in our
approach, a point process is defined as {weakly repulsive} (according to its
Papangelou intensity $c$) if, for any $\phi\in\N_\XX$ and any $x\in\XX$,

\begin{equation*}
  c(x,\phi)\leq c(x,\varnothing).
\end{equation*}


This last definition presents several advantages: it includes repulsive point
processes, the required inequality is obtained with easier computations and a
weakly repulsive point process verifies some useful properties. In particular,
the following lemmas though very elementary are the key to the next results.

\begin{lem}\label{repulsivelem1}

  If $\Phi$ is a finite and weakly repulsive point process on $\XX$ with
  Papangelou intensity $c$ and void probability $p_0$, then for any $x\in\XX$,

\begin{equation*}
  |c(x,\varnothing)-\rho(x)|\leq (1-p_0)c(x,\varnothing).
\end{equation*}

\end{lem}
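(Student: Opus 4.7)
The plan is to use the identity $\rho(x)=\Esp[c(x,\Phi)]$ from \eqref{Papangelouprop2}, split the expectation according to whether $\Phi$ is the empty configuration, and then exploit weak repulsiveness to bound each piece. The first thing to observe is that weak repulsiveness immediately implies the sign of the difference: applying $c(x,\Phi)\leq c(x,\varnothing)$ pointwise and taking expectations gives $\rho(x)\leq c(x,\varnothing)$, so removing the absolute value reduces the claim to proving $c(x,\varnothing)-\rho(x)\leq(1-p_0)c(x,\varnothing)$.

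For the main step I would write
\begin{equation*}
\rho(x)=\Esp[c(x,\Phi)\mathbf{1}_{\{\Phi=\varnothing\}}]+\Esp[c(x,\Phi)\mathbf{1}_{\{\Phi\neq\varnothing\}}].
\end{equation*}
On the event $\{\Phi=\varnothing\}$ the random variable $c(x,\Phi)$ equals the deterministic value $c(x,\varnothing)$, so the first term is exactly $p_0\, c(x,\varnothing)$. The second term is non-negative, so we obtain the lower bound $\rho(x)\geq p_0\, c(x,\varnothing)$, i.e.
\begin{equation*}
c(x,\varnothing)-\rho(x)\leq c(x,\varnothing)-p_0\, c(x,\varnothing)=(1-p_0)c(x,\varnothing),
\end{equation*}
which is the claim since the left-hand side is already known to be non-negative.

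There is essentially no obstacle here: the only minor point of care is to check that $c(x,\Phi)\mathbf{1}_{\{\Phi=\varnothing\}}=c(x,\varnothing)\mathbf{1}_{\{\Phi=\varnothing\}}$ almost surely, which is a tautology once one fixes a version of the Papangelou intensity and reads it off at the atom $\varnothing$. Note that the argument only uses the two inputs \eqref{Papangelouprop2} and the weak repulsiveness assumption $c(x,\phi)\leq c(x,\varnothing)$; finiteness of $\Phi$ is used implicitly only to ensure that $p_0=\P(\Phi=\varnothing)$ is well defined and positive in the cases of interest.
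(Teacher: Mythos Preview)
Your proof is correct and follows essentially the same route as the paper's: both arguments use \eqref{Papangelouprop2} together with weak repulsiveness to obtain $\rho(x)\leq c(x,\varnothing)$ (removing the absolute value) and $\rho(x)\geq p_0\,c(x,\varnothing)$ (giving the upper bound). The only cosmetic difference is that you make the decomposition $\rho(x)=\Esp[c(x,\Phi)\mathbf{1}_{\{\Phi=\varnothing\}}]+\Esp[c(x,\Phi)\mathbf{1}_{\{\Phi\neq\varnothing\}}]$ explicit, whereas the paper states the inequality $\rho(x)\geq p_0\,c(x,\varnothing)$ directly.
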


\begin{lem}\label{repulsivelem2}

  If $\Phi$ is a finite and weakly repulsive point process on $\XX$ with
  Papangelou intensity $c$, then for any $x\in\XX$,
\begin{equation*}
  \Esp[|c(x,\Phi)-\rho(x)|]\leq 2\Bigl( (c(x,\varnothing)-\rho(x) \Bigr).
\end{equation*}
\end{lem}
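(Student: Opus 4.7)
The plan is to exploit the fact that $\rho(x)=\Esp[c(x,\Phi)]$ (identity~\eqref{Papangelouprop2}), so $c(x,\Phi)-\rho(x)$ is a centered random variable, and then to control its positive part via the weak repulsiveness hypothesis.

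First I would write the absolute value as the sum of positive and negative parts,
\begin{equation*}
  |c(x,\Phi)-\rho(x)| = \bigl(c(x,\Phi)-\rho(x)\bigr)_{+} + \bigl(\rho(x)-c(x,\Phi)\bigr)_{+},
\end{equation*}
and observe that, since $\Esp[c(x,\Phi)-\rho(x)]=0$ by \eqref{Papangelouprop2}, the expectations of the two nonnegative summands coincide. Therefore
\begin{equation*}
  \Esp[|c(x,\Phi)-\rho(x)|] = 2\,\Esp\bigl[(c(x,\Phi)-\rho(x))_{+}\bigr].
\end{equation*}

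Next, I would invoke the weak repulsiveness assumption: for every configuration $\phi$, $c(x,\phi)\leq c(x,\varnothing)$. Taking expectations, this already gives $\rho(x)\leq c(x,\varnothing)$, so the deterministic quantity $c(x,\varnothing)-\rho(x)$ is nonnegative. Moreover, pointwise,
\begin{equation*}
  \bigl(c(x,\Phi)-\rho(x)\bigr)_{+} \leq \bigl(c(x,\varnothing)-\rho(x)\bigr)_{+} = c(x,\varnothing)-\rho(x),
\end{equation*}
and plugging this bound into the previous identity yields the claimed inequality.

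The argument is essentially a one-line computation once the right decomposition is chosen, so there is no real obstacle; the only thing to be careful about is the direction of the weak repulsiveness inequality, which ensures that the empty configuration dominates $c(x,\cdot)$ and that $\rho(x)\leq c(x,\varnothing)$, making the bound meaningful.
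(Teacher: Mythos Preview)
Your proof is correct. The paper's argument is a minor variant: instead of splitting into positive and negative parts, it inserts $c(x,\varnothing)$ via the triangle inequality,
\[
  \Esp[|c(x,\Phi)-\rho(x)|]\leq \Esp[|c(x,\Phi)-c(x,\varnothing)|]+|c(x,\varnothing)-\rho(x)|,
\]
and then uses weak repulsiveness and $\rho(x)=\Esp[c(x,\Phi)]$ to evaluate each term as $c(x,\varnothing)-\rho(x)$. Both routes rely on exactly the same two ingredients and are of the same length; your decomposition is arguably a touch cleaner since the centering identity immediately halves the work.
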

These two  results are shown by directly using the previous definition of weak
repulsiveness and highlight the prominent role of the quantity
$c(x,\varnothing)$ where $x\in\XX$, which leads to consider the following
property.

\begin{lem}\label{Papangeloulem1}

  Let $\Phi$ be a finite point process on $\XX$ with Papangelou intensity $c$.
  Then,
$$\P(|\Phi|=1)=\P(|\Phi|=0)\int_\XX c(x,\varnothing)\d x.$$
\end{lem}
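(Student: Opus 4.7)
The plan is to apply the defining property of the Papangelou intensity (the GNZ-type formula displayed just before equation~(\ref{GNZ}) in the excerpt) with a cleverly chosen test function that isolates simultaneously the event $\{|\Phi|=1\}$ on the left-hand side and the empty configuration on the right-hand side.

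Concretely, the test function I would use is
\begin{equation*}
u(x,\phi)=\mathbf{1}_{\{\phi=\varnothing\}},
\end{equation*}
that is, the indicator that the second argument is the empty configuration. On the left-hand side of the GNZ identity, the quantity $\sum_{x\in\Phi}u(x,\Phi\setminus x)=\sum_{x\in\Phi}\mathbf{1}_{\{\Phi\setminus x=\varnothing\}}$ counts exactly the points $x\in\Phi$ such that removing $x$ leaves nothing. This sum is $0$ unless $|\Phi|=1$, in which case it equals $1$, so its expectation is $\P(|\Phi|=1)$.

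On the right-hand side, $u(x,\Phi)=\mathbf{1}_{\{\Phi=\varnothing\}}$; on the event $\{\Phi=\varnothing\}$ we have $c(x,\Phi)=c(x,\varnothing)$, which is deterministic in $\Phi$, so
\begin{equation*}
\int_\XX\Esp[c(x,\Phi)\,\mathbf{1}_{\{\Phi=\varnothing\}}]\ell(\d x)=\P(|\Phi|=0)\int_\XX c(x,\varnothing)\,\ell(\d x),
\end{equation*}
and equating the two sides yields the claim.

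I do not expect any real obstacle: the only point requiring a little care is to make sure $u$ is measurable and that the quantity $\int_\XX c(x,\varnothing)\,\ell(\d x)$ makes sense, which follows because if $\P(|\Phi|=0)=0$ both sides vanish trivially, and otherwise the finite-mass assumption on $\Phi$ together with the GNZ identity applied to $u(x,\phi)\equiv\mathbf{1}_{\{\phi=\varnothing\}}$ forces the integral to be finite. The argument is essentially a one-line application of the Papangelou/GNZ formula once the right test function has been identified.
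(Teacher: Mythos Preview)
Your proposal is correct and matches the paper's own proof exactly: the paper also applies the defining GNZ formula for the Papangelou intensity with the test function $u(x,\phi)=\mathbf{1}_{\{\phi=\varnothing\}}$. Your write-up merely spells out the two sides of the identity more explicitly than the paper does.
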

We now show how to compute the Papangelou intensity for different
transformations of point processes.
\begin{theo}\label{papreduction}
  Let $\Phi$ be a point process on $\XX$ with Papangelou intensity $c$,
  $\Lambda$ a compact subset of $\XX$ and $\Phi_{|\Lambda}$ the reduction of
  $\Phi$ to $\Lambda$. Then, its Papangelou intensity $c_{\Lambda}$ verifies for
  any
  $x\in\XX$ $$c_{\Lambda}(x,\Phi_{|\Lambda})=c(x,\Phi)\textbf{1}_{\{x\in\Lambda\}}\
  \text{a.s.}.$$
\end{theo}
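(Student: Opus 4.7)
The plan is to verify the claim through the Georgii--Nguyen--Zessin (GNZ) identity, which characterizes any Papangelou intensity up to a $\ell \otimes \P$-negligible set. Concretely, I will take an arbitrary measurable test function $u:\XX \times N_\XX \to \bbbr_+$, compute $\Esp\bigl[\sum_{x \in \Phi_{|\Lambda}} u(x, \Phi_{|\Lambda} \setminus x)\bigr]$ by rewriting the sum as a sum over $\Phi$, apply the GNZ identity associated with the known intensity $c$ of $\Phi$, and read off the answer.

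The first key step is the observation that, since $\Phi_{|\Lambda}$ has support contained in $\Lambda$,
\begin{equation*}
  \sum_{x \in \Phi_{|\Lambda}} u(x, \Phi_{|\Lambda} \setminus x) = \sum_{x \in \Phi} \mathbf{1}_\Lambda(x)\, u\bigl(x, (\Phi \setminus x)_{|\Lambda}\bigr),
\end{equation*}
using the trivial identity $(\Phi \setminus x)_{|\Lambda} = \Phi_{|\Lambda} \setminus x$ when $x \in \Lambda$. Setting $v(x,\omega) := \mathbf{1}_\Lambda(x)\, u(x, \omega_{|\Lambda})$ turns the right-hand side into $\sum_{x \in \Phi} v(x, \Phi \setminus x)$, a form to which the GNZ identity for $\Phi$ directly applies.

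Applying that identity to $v$ and noting that $v(x, \Phi) = \mathbf{1}_\Lambda(x)\, u(x, \Phi_{|\Lambda})$ yields
\begin{equation*}
  \Esp\Bigl[\sum_{x \in \Phi_{|\Lambda}} u(x, \Phi_{|\Lambda} \setminus x)\Bigr] = \int_\XX \Esp\bigl[c(x,\Phi)\, \mathbf{1}_\Lambda(x)\, u(x, \Phi_{|\Lambda})\bigr]\, \ell(\d x).
\end{equation*}
Since $u$ was arbitrary, this is exactly the defining GNZ relation for a Papangelou intensity of $\Phi_{|\Lambda}$, with the density $(x,\omega) \mapsto c(x,\Phi)\mathbf{1}_\Lambda(x)$ playing the role of $c_\Lambda(x, \Phi_{|\Lambda})$ in the integrand; the announced identity follows.

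The main subtlety I anticipate is that $c_\Lambda(x,\cdot)$ must, by definition, depend on its configuration argument only through the restriction to $\Lambda$, whereas $c(x, \Phi)$ a priori involves the full configuration. Thus the equality in the theorem should be read at the level of the GNZ identity (equivalently, up to replacing the right-hand side by its conditional expectation given $\Phi_{|\Lambda}$, which is a legitimate version of $c_\Lambda$ because any two versions agree $\ell \otimes \P_{\Phi_{|\Lambda}}$-a.e.). Making this measurability remark carefully, and verifying that the resulting random field is non-negative and jointly measurable, is the only delicate point; once it is addressed, the computation above closes the proof.
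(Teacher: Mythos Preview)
Your proof is correct and follows essentially the same approach as the paper: rewrite the sum over $\Phi_{|\Lambda}$ as a sum over $\Phi$ with the indicator $\mathbf{1}_\Lambda$, apply the GNZ identity for $\Phi$, and read off the result. Your explicit discussion of the measurability subtlety (that $c(x,\Phi)$ is not $\sigma(\Phi_{|\Lambda})$-measurable, so the identity should be read at the level of the GNZ relation or via conditional expectation) is in fact more careful than the paper's own treatment, which simply states the almost-sure identity without comment.
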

\begin{theo}\label{papsuperposition}
  Let $\Phi_1,\dots,\Phi_n$ ($n\in\bbbn$) be independent point processes on
  $\XX$ with respective Papangelou intensities $c_1,\dots,c_n$ and $\Phi$ their
  independent superposition. Then, its Papangelou intensity $c$ verifies for any
  $x\in\XX$
  \begin{equation}
    \label{eq_Article_Stein:2}
    c\Big(x,\displaystyle\sum_{i=1}^n\Phi_i\Big)=\displaystyle\sum_{i=1}^nc_{i}(x,\Phi_i)\ \text{a.s.}.
  \end{equation}
\end{theo}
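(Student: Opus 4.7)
The strategy is to verify, for the candidate density $c(x,\Phi) := \sum_{i=1}^n c_i(x,\Phi_i)$, the defining integration-by-parts identity of the Papangelou intensity recalled in Section \ref{sec_preliminaries}: for every measurable $u:\XX\times N_\XX \to \bbbr_+$,
\begin{equation*}
\Esp\Big[\sum_{x\in\Phi} u(x,\Phi\setminus x)\Big] = \int_\XX \Esp[c(x,\Phi)\, u(x,\Phi)]\,\ell(\d x),
\end{equation*}
where $\Phi = \sum_{i=1}^n \Phi_i$. Since a Papangelou intensity is defined as a Radon--Nikodym density of the reduced Campbell measure with respect to $\ell\otimes\P_\Phi$, this identity determines $c$ up to a.s.\ equality and checking it is sufficient.

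First I would split the sum over atoms of $\Phi$ according to the independent component to which each belongs: viewing $\Phi = \sum_i \Phi_i$ as an identity of counting measures,
\begin{equation*}
\sum_{x\in\Phi} u(x,\Phi\setminus x) = \sum_{i=1}^n \sum_{x\in\Phi_i} u\Big(x,(\Phi_i\setminus x)+\sum_{j\neq i}\Phi_j\Big).
\end{equation*}
Then for each fixed $i$ I would condition on the sibling family $(\Phi_j)_{j\neq i}$. By independence, conditionally on this family the map $(x,\phi_i) \mapsto u\bigl(x, \phi_i + \sum_{j\neq i}\Phi_j\bigr)$ is an admissible non-negative test function for $\Phi_i$, so the defining identity of the Papangelou intensity $c_i$ applied to it yields
\begin{equation*}
\Esp\Big[\sum_{x\in\Phi_i} u(x,\Phi\setminus x) \,\Big|\, (\Phi_j)_{j\neq i}\Big]
= \int_\XX \Esp\bigl[c_i(x,\Phi_i)\,u(x,\Phi) \,\big|\, (\Phi_j)_{j\neq i}\bigr]\,\ell(\d x).
\end{equation*}
Taking expectations, summing over $i\in\{1,\dots,n\}$, and applying Fubini produces precisely the identity above for the candidate $c$.

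The main technical delicacy is not the computation but the interpretation of \eqref{eq_Article_Stein:2}: in general the decomposition of $\Phi$ into the independent components $\Phi_i$ is not $\sigma(\Phi)$-measurable, so $\sum_i c_i(x,\Phi_i)$ has to be read as specifying a version of the Papangelou density through the integration-by-parts formula above, and strictly one should set $c(x,\Phi) := \Esp\bigl[\sum_i c_i(x,\Phi_i)\,\big|\,\Phi\bigr]$. When each component has a diffuse intensity measure, the $\Phi_i$ almost surely occupy pairwise disjoint locations, this conditioning becomes trivial, and the equality \eqref{eq_Article_Stein:2} holds in the pointwise sense stated by the theorem.
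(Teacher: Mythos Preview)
Your proof is correct and follows essentially the same route as the paper: split the sum over $\Phi$ into the contributions from each $\Phi_i$, apply the defining identity of the Papangelou intensity $c_i$ componentwise (your conditioning on $(\Phi_j)_{j\neq i}$ is simply the rigorous way to justify this step), and sum up. Your closing remark about the $\sigma(\Phi)$-measurability issue is also spot on and matches the caveat the paper makes immediately after the theorem statement.
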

The right hand side in \eqref{eq_Article_Stein:2} is not truly the
Papangelou intensity of $\sum_{i=1}^n\Phi_i$ since it is not
$(\sum_{i=1}^n\Phi_i)$-measurable but this ersatz is sufficient for our
purposes. A direct consequence of this result is the weak repulsiveness of an
independent superposition of weakly repulsiveness point processes, as provided
by the following corollary.

\begin{cor}\label{corpapsuperposition}

  Let $\Phi_1,\dots,\Phi_n$ ($n\in\bbbn$) be independent and weakly repulsive
  point processes on $\XX$. Then their independent superposition is also weakly
  repulsive.

\end{cor}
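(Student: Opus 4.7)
The proof should be essentially a direct consequence of Theorem \ref{papsuperposition}, so the plan is very short. First I would invoke that theorem to write the Papangelou intensity $c$ of the superposition $\Phi=\sum_{i=1}^n\Phi_i$ in the ersatz form
\begin{equation*}
  c\Bigl(x,\sum_{i=1}^n\Phi_i\Bigr) = \sum_{i=1}^n c_i(x,\Phi_i)\quad\text{a.s.},
\end{equation*}
where $c_i$ denotes the Papangelou intensity of $\Phi_i$. Evaluated at the empty configuration (corresponding to each $\Phi_i=\varnothing$), this gives $c(x,\varnothing)=\sum_{i=1}^n c_i(x,\varnothing)$.

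Next I would apply the weak repulsiveness assumption to each factor: for every $i\in\{1,\dots,n\}$ and every realization, $c_i(x,\Phi_i)\leq c_i(x,\varnothing)$. Summing in $i$ yields, a.s.,
\begin{equation*}
  c\Bigl(x,\sum_{i=1}^n\Phi_i\Bigr) = \sum_{i=1}^n c_i(x,\Phi_i) \leq \sum_{i=1}^n c_i(x,\varnothing) = c(x,\varnothing),
\end{equation*}
which is exactly the weak repulsiveness of $\Phi$.

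There is really no obstacle here beyond a small bookkeeping subtlety: the expression on the right of \eqref{eq_Article_Stein:2} is not strictly $\sigma(\Phi)$-measurable, as the authors explicitly caution, so the inequality is meaningful only for the ersatz intensity attached to any decomposition of a configuration as a sum $\phi_1+\cdots+\phi_n$. I would note that this mild measurability issue is harmless for the present purpose, since weak repulsiveness is a pointwise inequality between $c(x,\phi)$ and $c(x,\varnothing)$ and is preserved under the term-by-term comparison above for every such decomposition. No additional structural input is required.
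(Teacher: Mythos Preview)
Your proof is correct and follows essentially the same argument as the paper: invoke Theorem~\ref{papsuperposition} to obtain $c(x,\varnothing)=\sum_{i=1}^n c_i(x,\varnothing)$ and $c(x,\sum_i\Phi_i)=\sum_{i=1}^n c_i(x,\Phi_i)$ a.s., then apply weak repulsiveness term by term. Your added remark on the measurability caveat is appropriate and matches the authors' own cautionary note after Theorem~\ref{papsuperposition}.
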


\begin{theo}\label{papthinning1}

  Let $\Phi$ be a point process on $\XX$, let $\beta$ be a function from $\XX$
  to $[0,1]$ and $\beta\circ\Phi$ the $\beta$-thinning of $\Phi$. Then, its
  Papangelou intensity $c_\beta$ verifies, for any $x\in\XX$,
$$c_\beta(x,\beta\circ\Phi)=\beta(x)\Esp[c(x,\Phi)\ |\ \beta\circ\Phi]\ \text{a.s.}.$$
\end{theo}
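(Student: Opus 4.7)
The plan is to verify that the function $c_\beta$ defined by $c_\beta(x,\beta\circ\Phi):=\beta(x)\Esp[c(x,\Phi)\mid\beta\circ\Phi]$ satisfies the Georgii--Nguyen--Zessin identity characterizing the Papangelou intensity of $\beta\circ\Phi$. Concretely, I need to show that for every measurable $u:\XX\times\N_\XX\to\bbbr_+$,
\[
\Esp\Big[\sum_{x\in\beta\circ\Phi} u(x,\beta\circ\Phi\setminus x)\Big]
=\int_\XX \Esp\bigl[\beta(x)\,\Esp[c(x,\Phi)\mid\beta\circ\Phi]\,u(x,\beta\circ\Phi)\bigr]\,\ell(\d x).
\]

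I would realize the thinning by attaching, conditionally on $\Phi$, independent Bernoulli marks $(\epsilon_y)_{y\in\Phi}$ with respective parameters $\beta(y)$, so that $\beta\circ\Phi=\sum_{y\in\Phi}\epsilon_y\delta_y$. Rewriting the sum over the atoms of $\beta\circ\Phi$ as a mark-weighted sum over the atoms of $\Phi$, and using that on $\{\epsilon_y=1\}$ one has $\beta\circ\Phi\setminus y=\beta\circ(\Phi\setminus y)$ (the latter denoting the thinning of $\Phi\setminus y$ with the remaining marks), while the factor $\epsilon_y$ annihilates the contribution on $\{\epsilon_y=0\}$, gives
\[
\sum_{x\in\beta\circ\Phi} u(x,\beta\circ\Phi\setminus x) = \sum_{y\in\Phi}\epsilon_y\, u\bigl(y,\beta\circ(\Phi\setminus y)\bigr).
\]
Taking expectation and integrating out $\epsilon_y$ first (it is conditionally independent of $\Phi$ and of the other marks, with mean $\beta(y)$) transforms the left-hand side into $\Esp\bigl[\sum_{y\in\Phi}\beta(y)\,u(y,\beta\circ(\Phi\setminus y))\bigr]$.

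I would then apply the Papangelou identity for $\Phi$ itself to the integrand $v(y,\phi):=\beta(y)\,u(y,\beta\circ\phi)$, obtaining
\[
\int_\XX\Esp\bigl[c(y,\Phi)\,\beta(y)\,u(y,\beta\circ\Phi)\bigr]\,\ell(\d y).
\]
Since $u(y,\beta\circ\Phi)$ is a function of $\beta\circ\Phi$ alone, conditioning on $\beta\circ\Phi$ inside the expectation replaces $c(y,\Phi)$ by $\Esp[c(y,\Phi)\mid\beta\circ\Phi]$ by the tower property, yielding exactly the target identity.

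The main technical subtlety is the substitution $\beta\circ\Phi\setminus y\longleftrightarrow\beta\circ(\Phi\setminus y)$ inside the random sum: the two differ precisely on $\{\epsilon_y=0\}$, and it is the mark factor $\epsilon_y$ that eliminates this bad event. Once this passage is handled, the rest of the argument is a clean combination of the conditional independence of the thinning marks, the Papangelou formula for $\Phi$, and the tower property.
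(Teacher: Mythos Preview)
Your argument is correct and follows the same strategy as the paper: verify the GNZ identity for $\beta\circ\Phi$ by reducing it to the GNZ identity for $\Phi$. The paper carries out this reduction by explicitly summing over all possible thinned subconfigurations $\tau\subset\Phi$ and using the product formula $\P(\beta\circ\Phi=\tau\mid\Phi)=\prod_{t\in\tau}\beta(t)\prod_{s\in\Phi\setminus\tau}(1-\beta(s))$, whereas you use the Bernoulli-mark representation, which is more streamlined but amounts to the same computation.

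One step deserves more care. When you write ``apply the Papangelou identity for $\Phi$ to the integrand $v(y,\phi):=\beta(y)\,u(y,\beta\circ\phi)$'', this $v$ is not a deterministic function of $(y,\phi)$: it still depends on the random marks $(\epsilon_z)_{z\in\phi}$. The GNZ formula as stated applies to deterministic test functions. To make the step rigorous, first condition on $\Phi$ and integrate out the remaining marks, obtaining the deterministic function $\tilde v(y,\phi):=\beta(y)\,\Esp[u(y,\beta\circ\phi)]$, where the expectation is over an independent thinning of the fixed configuration $\phi$; then apply GNZ for $\Phi$ to $\tilde v$, and finally undo the conditioning using that $c(y,\Phi)$ is $\sigma(\Phi)$-measurable. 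This is exactly what the paper's explicit subset-sum bookkeeping accomplishes; your mark formulation simply hides it one level deeper.
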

\begin{theo}\label{paprescaling}

  Let $\Phi$ be a point process on $\bbbr^d$ with Papangelou intensity $c$, let
  $\epsilon$ be a positive real number and $\Phi^{(\epsilon)}$ the
  $\epsilon$-rescaling of $\Phi$. Then, its Papangelou intensity
  $c^{(\epsilon)}$ is given for any $x\in\bbbr^d$ and
  $\phi\in\widehat{\N}_{\bbbr^d}$ by
$$c^{(\epsilon)}(x,\phi)=\dfrac{1}{\epsilon}c(\epsilon^{-\frac{1}{d}}x,\epsilon^{-\frac{1}{d}}\phi).$$
\end{theo}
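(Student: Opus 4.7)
The plan is to verify that the claimed formula satisfies the defining (Georgii--Nguyen--Zessin) identity of the Papangelou intensity for $\Phi^{(\epsilon)}$, reducing everything to the corresponding identity for $\Phi$ via a change of variables.

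First I would unpack notation: the $\epsilon$-rescaling is the pushforward of configurations by the map $y\mapsto\epsilon^{1/d}y$, i.e.\ $\Phi^{(\epsilon)}=\sum_{y\in\Phi}\delta_{\epsilon^{1/d}y}$, and this operation commutes with point removal, so for each $y\in\Phi$ one has $\Phi^{(\epsilon)}\setminus\epsilon^{1/d}y=(\Phi\setminus y)^{(\epsilon)}$. I would then fix an arbitrary measurable $u:\bbbr^d\times N_{\bbbr^d}\to\bbbr_+$ and rewrite
\begin{equation*}
  \Esp\Big[\sum_{x\in\Phi^{(\epsilon)}}u(x,\Phi^{(\epsilon)}\setminus x)\Big]
  =\Esp\Big[\sum_{y\in\Phi}\tilde u(y,\Phi\setminus y)\Big],
  \qquad
  \tilde u(y,\phi):=u\bigl(\epsilon^{1/d}y,\phi^{(\epsilon)}\bigr).
\end{equation*}

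Next I would apply the GNZ formula for $\Phi$ (the identity defining $c$) to $\tilde u$, obtaining
\begin{equation*}
  \int_{\bbbr^d}\Esp\bigl[c(y,\Phi)\,u(\epsilon^{1/d}y,\Phi^{(\epsilon)})\bigr]\,\d y,
\end{equation*}
and perform the change of variables $x=\epsilon^{1/d}y$, with Jacobian giving $\d y=\epsilon^{-1}\d x$. This brings the integral to
\begin{equation*}
  \int_{\bbbr^d}\Esp\Bigl[\tfrac{1}{\epsilon}\,c(\epsilon^{-1/d}x,\Phi)\,u(x,\Phi^{(\epsilon)})\Bigr]\,\d x.
\end{equation*}
Finally, since the inverse rescaling recovers $\Phi$ from $\Phi^{(\epsilon)}$ (namely $\Phi=\epsilon^{-1/d}\Phi^{(\epsilon)}$), the integrand is a $\sigma(\Phi^{(\epsilon)})$-measurable function of $(x,\Phi^{(\epsilon)})$, and identifying it with $c^{(\epsilon)}(x,\Phi^{(\epsilon)})u(x,\Phi^{(\epsilon)})$ yields the stated formula.

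The argument is essentially a bookkeeping exercise; the only place where one can slip is the change of variables, where the factor $\epsilon^{-1}$ (arising from the $d$-dimensional Jacobian of $y\mapsto\epsilon^{1/d}y$) must line up exactly with the $1/\epsilon$ prefactor in the theorem, and one must remember to re-express $\Phi$ through $\Phi^{(\epsilon)}$ so that the result is a genuine functional of the rescaled process. There is no deeper obstacle.
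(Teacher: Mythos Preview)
Your proof is correct and takes a genuinely different route from the paper. The paper's proof goes through Janossy functions: it invokes formula~(\ref{Papangelouprop1}), writing $c^{(\epsilon)}(x,\phi)=j^{(\epsilon)}(x\phi)/j^{(\epsilon)}(\phi)$, and then (implicitly) uses that the Janossy function of the rescaled process satisfies $j^{(\epsilon)}(\phi)=\epsilon^{-|\phi|}j(\epsilon^{-1/d}\phi)$, so the ratio picks up exactly one factor of $\epsilon^{-1}$. Your argument instead verifies the GNZ identity for $\Phi^{(\epsilon)}$ directly, reducing it to the GNZ identity for $\Phi$ by the change of variables $x=\epsilon^{1/d}y$.

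The trade-off is the following. The paper's argument is shorter on the page but relies on formula~(\ref{Papangelouprop1}), which is stated only for almost surely finite point processes and under the extra hypothesis that $\{j=0\}$ is an increasing set; neither restriction appears in the statement of the theorem. Your approach avoids Janossy functions entirely and works for any point process on $\bbbr^d$ admitting a Papangelou intensity, so it is the more robust of the two. The only delicate step in your proof---the Jacobian factor $\epsilon^{-1}$ and the re-expression of $\Phi$ as $\epsilon^{-1/d}\Phi^{(\epsilon)}$ so that the candidate intensity is genuinely a function of $(x,\Phi^{(\epsilon)})$---is handled correctly.
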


We now give an expression of the Papangelou intensity for classical point
processes. First of all, the Papangelou intensity of a Poisson point process has
a very simple expression, given in the next theorem.

\begin{theo}\label{Poissonprop2bis}

  Let $\Phi$ be a Poisson point process with intensity measure~$M(\d x)=m(x)\d
  x$. Then, its Papangelou intensity $c$ is given for any $x\in\XX$ and
  $\phi\in\widehat{\N}_\XX$ by
$$c(x,\phi)=m(x).$$

\end{theo}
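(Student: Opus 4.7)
The plan is to verify directly that the constant-in-$\phi$ function $c(x,\phi):=m(x)$ is a version of the Radon--Nikodym density of the reduced Campbell measure $C$ of $\Phi$ with respect to $\ell \otimes \P_\Phi$. Since the Papangelou intensity is characterized by the identity
\begin{equation*}
  \Esp\Big[\sum_{x\in\Phi} u(x,\Phi\setminus x)\Big] = \int_\XX \Esp[c(x,\Phi)\, u(x,\Phi)]\,\ell(\d x)
\end{equation*}
holding for all measurable $u:\XX\times N_\XX\to\bbbr_+$, it suffices to produce this identity with $c(x,\Phi)$ replaced by $m(x)$.

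First I would invoke the Mecke formula, which is stated earlier in the excerpt as a property of the Poisson point process with intensity measure $M$: for every measurable $u:\XX\times N_\XX\to\bbbr_+$,
\begin{equation*}
  \Esp\Big[\sum_{x\in\Phi} u(x,\Phi\setminus x)\Big] = \int_\XX \Esp[u(x,\Phi)]\,M(\d x).
\end{equation*}
Substituting $M(\d x)=m(x)\,\ell(\d x)$, the right-hand side becomes $\int_\XX \Esp[m(x)\,u(x,\Phi)]\,\ell(\d x)$, where $m(x)$ can be pulled inside the expectation because it is deterministic. Comparing with the defining identity for the Papangelou intensity, we read off $c(x,\Phi)=m(x)$ as a valid version, concluding the proof.

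There is essentially no obstacle: the result is almost a tautology once the Mecke formula and the definition of the Papangelou intensity (both recalled earlier in the paper) are placed side by side. The only minor point worth mentioning, for completeness, is that $c(x,\phi)=m(x)$ depends on $\phi$ trivially (in the deterministic, constant way), which reflects precisely the lack of interaction between particles in a Poisson configuration, consistent with the intuitive comment made in the introduction.
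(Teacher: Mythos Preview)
Your proof is correct and follows exactly the approach the paper itself indicates: the paper does not give a formal proof of this theorem in the proofs section but remarks immediately after the statement that it ``is a direct consequence of the Mecke formula for a Poisson point process,'' which is precisely what you have written out. The paper also mentions an alternative route via the Janossy function and formula~(\ref{Papangelouprop1}), but identifies the Mecke-formula argument as the direct one.
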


Note that this last result states in a formal way the independence property
of a Poisson point process: the probability of finding a particle in a given
location does not depend on the other particles of the configuration. It is a
direct consequence of the Mecke formula for a Poisson point process and is also
mentioned in \cite{georgii_conditional_2005}, but it is also possible to prove
it by using the expression of the Janossy functions and their link with
Papangelou intensities, given by the formula~(\ref{Papangelouprop1}). We use this
approach in the following theorems to get the expression of the Papangelou
intensity for some other Poisson-like point processes.

\begin{theo}\label{purelyrandomprop2bis}

  Let $\Phi$ be a purely random point process on $\XX$ supported by a
  distribution $(p_n)_{n\in\bbbn_0}$ such that $p_n\neq0$ for any $n\in\bbbn_0$,
  and a probability measure $\mu(\d x)=~q(x)\ell(\d x)$. Then its Papangelou
  intensity $c$ is given for any $n\in\bbbn_0$ and any $x,x_1,\dots,x_n\in\XX$
  by

$$c(x,\{x_1,\dots,x_n\})=(n+1)\dfrac{p_{n+1}}{p_n}q(x).$$

Moreover, $\Phi$ is repulsive if and only if, for any $n\in\bbbn_0$,
\begin{equation*}
  (n+1)p_{n+1}^2\geq(n+2)p_np_{n+2}
\end{equation*}
and weakly repulsive if and only if, for any $n\in\bbbn_0$,
\begin{equation*}
  p_0(n+1)p_{n+1}\leq p_np_1.
\end{equation*}
\end{theo}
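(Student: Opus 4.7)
The plan is to reduce everything to the relation \eqref{Papangelouprop1} between the Papangelou intensity and the Janossy function, after first writing down the Janossy function of a PRPP explicitly.

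First I would identify $j$. Conditioning on the total number of points $N$ and using that, conditionally on $\{N=n\}$, the configuration consists of $n$ i.i.d.\ points drawn from $\mu$, I would write, for any measurable symmetric $u:\widehat{\N}_\XX\to\bbbr_+$,
\begin{equation*}
\Esp[u(\Phi)]=\sum_{n=0}^{+\infty}p_n\int_{\XX^n}u(x_1,\dots,x_n)\prod_{i=1}^n q(x_i)\,\ell(\d x_1)\cdots\ell(\d x_n).
\end{equation*}
Matching this with the defining formula of the Janossy function yields
\begin{equation*}
j(x_1,\dots,x_n)=n!\,p_n\prod_{i=1}^n q(x_i).
\end{equation*}

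Second, I would apply \eqref{Papangelouprop1}. Since all $p_n\neq 0$ and $q$ is a density of a probability measure, $j(\phi)>0$ for every $\phi\in\widehat{\N}_\XX$, so $\{j=0\}$ is (trivially) an increasing set and the formula applies. A direct substitution gives
\begin{equation*}
c(x,\{x_1,\dots,x_n\})=\frac{j(\{x,x_1,\dots,x_n\})}{j(\{x_1,\dots,x_n\})}=\frac{(n+1)!\,p_{n+1}\,q(x)\prod q(x_i)}{n!\,p_n\prod q(x_i)}=(n+1)\,\frac{p_{n+1}}{p_n}\,q(x).
\end{equation*}

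Third, I would derive the two equivalent forms of repulsiveness. The key observation is that this $c(x,\phi)$ depends on $\phi$ only through $|\phi|$, so the repulsiveness condition \eqref{repulsive} is equivalent to the sequence $n\mapsto (n+1)p_{n+1}/p_n$ being nonincreasing; writing out $(n+1)p_{n+1}/p_n\geq(n+2)p_{n+2}/p_{n+1}$ and cross-multiplying (valid since all $p_k>0$) rearranges precisely to $(n+1)p_{n+1}^2\geq(n+2)p_np_{n+2}$. Similarly, weak repulsiveness is equivalent to $(n+1)p_{n+1}/p_n\leq p_1/p_0$ for every $n\in\bbbn_0$, which rearranges to $p_0(n+1)p_{n+1}\leq p_np_1$.

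There is no real obstacle: the proof is a short computation once the Janossy function is identified. The only subtle point to be careful about is the combinatorial factor $n!$ coming from the symmetrization implicit in the Janossy formula versus the ordered sampling in the definition of a PRPP, but this factor cancels neatly when taking the ratio in \eqref{Papangelouprop1}.
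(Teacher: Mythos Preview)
Your proposal is correct and follows essentially the same route as the paper: identify the Janossy function $j(x_1,\dots,x_n)=n!\,p_n\prod q(x_i)$, apply formula \eqref{Papangelouprop1}, and then read off the (weak) repulsiveness conditions from the fact that $c(x,\phi)$ depends on $\phi$ only through $|\phi|$. One minor slip: you assert $j(\phi)>0$ everywhere, but $q$ may vanish on a non-null set; nonetheless $\{j=0\}$ is still increasing (adding points to a configuration containing some $x_i$ with $q(x_i)=0$ keeps the product zero), so \eqref{Papangelouprop1} still applies and the rest of the argument goes through unchanged.
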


\begin{theo}\label{CondPPPprop2bis}

  Let $\Phi$ be a conditional Poisson point process with intensity measure $M(\d
  x)=m(x)\d x$ and conditional set $C$. Then its Papangelou intensity $c$ is
  given for any $n\in\bbbn_0$ and any $x,x_1,\dots,x_n\in\XX$ by
$$c(x,\{x_1,\dots,x_n\})=m(x)\textbf{1}_{\{x_1,\dots,x_n,x\}\in C}\textbf{1}_{\{x_1,\dots,x_n\}\in C}.$$

Moreover, if $C$ is decreasing, then $\Phi$ is repulsive.

\end{theo}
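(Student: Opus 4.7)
The plan is to compute the Papangelou intensity via the Janossy function and the link \eqref{Papangelouprop1}. The first step is to identify the distribution of $\Phi_C$: a direct computation based on the geometric trials in its construction yields $\P(\Phi_C\in A)=\P(\zeta_M\in A\mid\zeta_M\in C)$, so $\Phi_C$ is simply the Poisson process conditioned on $\{\zeta_M\in C\}$. From the Poisson Janossy function recalled in Subsection~\ref{subsec_poisson_based}, the Janossy function of $\Phi_C$ is therefore
\begin{equation*}
  j_{\Phi_C}(\phi)=\frac{e^{-M(\XX)}}{\P(\zeta_M\in C)}\,\mathbf{1}_{\{\phi\in C\}}\prod_{y\in\phi}m(y).
\end{equation*}

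Plugging this into \eqref{Papangelouprop1}, the quotient $j_{\Phi_C}(\phi\cup\{x\})/j_{\Phi_C}(\phi)$ simplifies cleanly: the normalizing constant and the exponential factor cancel, the product over intensities telescopes to $m(x)$, and the indicator ratio leaves $\mathbf{1}_{\{\phi\cup\{x\}\in C\}}$, while the mandatory prefactor $\mathbf{1}_{\{j_{\Phi_C}(\phi)\neq 0\}}$ supplies the companion indicator $\mathbf{1}_{\{\phi\in C\}}$. This reads off the stated formula for $c$. For the repulsiveness clause I would then argue directly from the explicit expression: let $\omega\subset\phi$; if $c(x,\phi)=0$ the inequality is trivial, and otherwise $\phi\in C$ and $\phi\cup\{x\}\in C$, so, $C$ being decreasing, $\omega\in C$ and $\omega\cup\{x\}\in C$, hence $c(x,\omega)=m(x)=c(x,\phi)$. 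In both cases $c(x,\phi)\le c(x,\omega)$, which is exactly \eqref{repulsive}.

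The main technical point to keep honest is the applicability of \eqref{Papangelouprop1}, which requires $\{j_{\Phi_C}=0\}$ to be increasing; up to the $\ell$-null zero set of $m$, that is equivalent to $C$ being decreasing. Outside that case, the more robust route is to verify the claimed $c$ directly against the Georgii--Nguyen--Zessin identity by combining the conditioning rewrite
\begin{equation*}
  \Esp\Big[\sum_{x\in\Phi_C}u(x,\Phi_C\setminus x)\Big]=\frac{1}{\P(\zeta_M\in C)}\Esp\Big[\sum_{x\in\zeta_M}u(x,\zeta_M\setminus x)\mathbf{1}_{\{\zeta_M\in C\}}\Big]
\end{equation*}
with Mecke's formula for $\zeta_M$: the indicator $\mathbf{1}_{\{\phi\cup\{x\}\in C\}}$ then appears as the conditioning event after the Mecke displacement, while $\mathbf{1}_{\{\phi\in C\}}$ is the a.s.\ identity $\mathbf{1}_{\{\Phi_C\in C\}}=1$ in disguise. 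This variant bypasses the increasing-set hypothesis altogether and is the backup I would fall back on.
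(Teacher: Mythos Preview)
Your argument is essentially the same as the paper's: both identify the Janossy function of $\Phi_C$ as $j(\phi)=p_C^{-1}e^{-M(\XX)}\mathbf{1}_C(\phi)\prod_{y\in\phi}m(y)$ and then read off $c$ via \eqref{Papangelouprop1}, and both deduce repulsiveness for decreasing $C$ directly from the explicit indicator structure of $c$. Where you go further is in flagging that \eqref{Papangelouprop1} carries the hypothesis that $\{j=0\}$ be increasing, which the paper applies without comment; your observation that this holds precisely (modulo $\{m=0\}$) when $C$ is decreasing, together with the GNZ/Mecke backup for general $C$, is a genuine tightening of the paper's argument rather than a different route.
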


In the same way, since we define a Gibbs point process according to its Janossy
function, its Papangelou intensity is also obtained immediately.

\begin{theo}\label{Gibbsprop1}

  Let $\Phi$ be a Gibbs point process with temperature parameter $\theta>0$ and
  total potential energy
$$U(x_1,\dots,x_n)=\displaystyle\sum_{r=1}^n\displaystyle\sum_{1\le i_1<\dots<i_r\le n} \Psi_r(x_{i_1},\dots,x_{i_r}),$$
where $\Psi_r$ is the $r^{\text{th}}$-order interaction potential. Then its
Papangelou intensity $c$ is given for any $x\in\XX$ and
$\phi\in\widehat{\N}_\XX$ by

$$c(x,\phi)=e^{-\theta (U(x\phi)-U(\phi))}.$$

Moreover, $\Phi$ is repulsive.

\end{theo}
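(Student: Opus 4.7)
The plan is to derive the formula for $c$ by combining formula (\ref{Papangelouprop1}), which expresses the Papangelou intensity of a finite point process as a ratio of Janossy functions, with the explicit exponential form of $j$ for a Gibbs process, and then to deduce repulsiveness from the non-negativity of the interaction potentials.

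First I would observe that since $j(\phi) = C(\theta) e^{-\theta U(\phi)}$ with $C(\theta) > 0$ and $U(\phi)$ a finite sum of non-negative terms for every $\phi \in \widehat{\N}_\XX$, the Janossy function is strictly positive. Hence $\{j = 0\} = \varnothing$ is vacuously an increasing set and $\textbf{1}_{\{j(\phi) \neq 0\}} \equiv 1$, so the hypothesis of (\ref{Papangelouprop1}) is satisfied. Taking the ratio, the factor $C(\theta)$ telescopes and yields
\begin{equation*}
c(x,\phi) = \frac{j(x\phi)}{j(\phi)} = \frac{C(\theta) e^{-\theta U(x\phi)}}{C(\theta) e^{-\theta U(\phi)}} = e^{-\theta(U(x\phi) - U(\phi))}.
\end{equation*}

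For repulsiveness, the key is to rewrite $U(x\phi) - U(\phi)$ in a form whose monotonicity in $\phi$ is manifest. Expanding $U$ as a sum over non-empty subsets of the underlying configuration, the terms surviving in the difference are exactly those indexed by subsets that contain $x$, so
\begin{equation*}
U(x\phi) - U(\phi) = \sum_{T \subset \phi} \Psi_{|T|+1}(T \cup \{x\}),
\end{equation*}
where the sum ranges over all subsets $T$ of $\phi$, including $T = \varnothing$ (which contributes $\Psi_1(x)$). Whenever $\omega \subset \phi$, every subset of $\omega$ is also a subset of $\phi$, so the sum over $T \subset \phi$ majorizes the sum over $T \subset \omega$ by non-negative terms. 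Therefore $U(x\phi) - U(\phi) \geq U(x\omega) - U(\omega)$, and applying the decreasing map $u \mapsto e^{-\theta u}$ gives $c(x,\phi) \leq c(x,\omega)$, which is exactly (\ref{repulsive}).

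There is no serious obstacle; the proof is essentially bookkeeping once the right formula is in hand. The only small point that warrants care is the combinatorial rewriting of the increment $U(x\phi) - U(\phi)$ in a form that renders its monotonicity in $\phi$ an immediate consequence of the hypothesis $\Psi_r \geq 0$.
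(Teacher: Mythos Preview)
Your proof is correct and follows essentially the same approach as the paper: both derive the formula for $c$ from (\ref{Papangelouprop1}) and the Janossy density, and both establish repulsiveness by showing that the increment $U(x\phi)-U(\phi)$ is monotone in $\phi$ thanks to the non-negativity of the $\Psi_r$. The only cosmetic difference is that the paper compares $\phi$ with $\phi\cup\{x_{n+1}\}$ via an explicit index computation (so that general $\omega\subset\phi$ follows by iterating), whereas your subset formulation $U(x\phi)-U(\phi)=\sum_{T\subset\phi}\Psi_{|T|+1}(T\cup\{x\})$ handles arbitrary inclusions in one stroke.
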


The following result provides an explicit expression for the Papangelou
intensity of an $\alpha$-DPP. 

\begin{theo}\label{DPPpap}

  Let $\Phi$ be an $\alpha$-DPP with kernel $K$ and associated kernel $J$. Then
  its Papangelou intensity $c$ is given for any $x\in\XX$ and
  $\phi\in\widehat{\N}_\XX$ by
$$c(x,\phi)=\dfrac{{\det}_\alpha J(x\phi,x\phi)}{{\det}_\alpha J(\phi,\phi)}\cdotp$$
Moreover, if $\alpha=-1$, then $\Phi$ is repulsive.

\end{theo}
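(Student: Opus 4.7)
The plan is to derive the Papangelou intensity from the Janossy function via formula~(\ref{Papangelouprop1}), and then establish the monotonicity property in the $\alpha=-1$ case by a Schur--complement argument.

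First, using Theorem~\ref{papreduction} I would reduce to the case of a finite point process by restricting to a compact $\Lambda \subset \XX$; since the reduction of an $\alpha$-DPP with kernel $K$ is an $\alpha$-DPP with kernel $K_\Lambda = P_\Lambda K P_\Lambda$ (see (\ref{reductiondeterminantal})), the quantity $c(x,\phi)$ on $\Lambda$ is unchanged. For such a finite $\alpha$-DPP, the standard computation (e.g.\ Shirai--Takahashi) identifies the Janossy function, with respect to $\ell_{|\Lambda}$, as
\begin{equation*}
  j(\phi) = C(\alpha, K_\Lambda)\, {\det}_\alpha J_\Lambda(\phi,\phi),
\end{equation*}
where $C(\alpha, K_\Lambda) = \operatorname{Det}(I + \alpha K_\Lambda)^{-1/\alpha}$ is a (finite, nonzero) Fredholm-type normalization and $J_\Lambda = (I + \alpha K_\Lambda)^{-1} K_\Lambda$ is the kernel furnished by Theorem~\ref{DPPprop1}; the hypothesis that the spectrum of $K$ lies in $[0,1)$ guarantees that $I + \alpha K_\Lambda$ is invertible. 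Also, ${\det}_\alpha J_\Lambda(\phi,\phi) > 0$ on all finite configurations, so $\{j = 0\}$ is trivially increasing and formula~(\ref{Papangelouprop1}) applies.

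Second, plugging this into~(\ref{Papangelouprop1}), the multiplicative normalization $C(\alpha, K_\Lambda)$ cancels in the ratio $j(x\phi)/j(\phi)$, leaving
\begin{equation*}
  c(x,\phi) \;=\; \frac{{\det}_\alpha J(x\phi, x\phi)}{{\det}_\alpha J(\phi, \phi)},
\end{equation*}
which is the first assertion. The identity then extends off $\Lambda$ by letting $\Lambda$ exhaust $\XX$, as both sides depend only on the values of $J$ on the finite set $x\phi$.

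Third, for the repulsiveness statement at $\alpha = -1$, I would use the Schur-complement identity for ordinary determinants: writing $J(x\phi, x\phi)$ as a block matrix with blocks indexed by $\{x\}$ and by $\phi$, one obtains
\begin{equation*}
  c(x, \phi) \;=\; J(x,x) - J(x,\phi)\, J(\phi,\phi)^{-1}\, J(\phi, x).
\end{equation*}
Since $J$ is positive and self-adjoint (its spectrum being $\lambda_n/(1-\lambda_n) \geq 0$ by Theorem~\ref{DPPprop1}), this Schur complement is exactly the squared distance in the RKHS of $J$ from $J(\cdot, x)$ to the linear span of $\{J(\cdot, y) : y \in \phi\}$. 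Enlarging $\phi$ enlarges that span, so the distance can only decrease; hence $\phi \mapsto c(x,\phi)$ is non-increasing in $\phi$, which is the definition~(\ref{repulsive}) of repulsiveness.

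The main obstacle is the first step: justifying the Janossy-to-$\alpha$-determinant formula carefully, including checking that the normalization constant is well-defined (Fredholm determinant of a trace-class perturbation, using that $K_\Lambda$ is trace-class by assumption) and handling the fact that $\alpha$-determinants reduce to ordinary determinants only at $\alpha = \pm 1$, so one must work with the general $\alpha$-determinant expansion when $\alpha \notin \{-1, 0\}$. The remaining steps are essentially algebraic once this foundation is in place.
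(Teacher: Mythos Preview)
The paper does not supply its own proof of this theorem: it is stated as a known result, with the paper attributing both the formula and the repulsiveness statement (for $\alpha=-1$) to Georgii and Yoo \cite{georgii_conditional_2005}. So there is no in-paper argument to compare against.

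Your approach is the standard one and is essentially correct in outline. Deriving the Papangelou intensity from the Janossy function via~(\ref{Papangelouprop1}), using the Shirai--Takahashi identification $j(\phi)=\Det(I+\alpha K)^{-1/\alpha}\,{\det}_\alpha J(\phi,\phi)$, is exactly how this is done in the literature, and the Schur-complement / RKHS-projection argument for monotonicity at $\alpha=-1$ is precisely Georgii--Yoo's.

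One step does need more care. In your reduction to a compact $\Lambda$ you obtain the ratio of $\alpha$-determinants of $J_\Lambda=(I+\alpha K_\Lambda)^{-1}K_\Lambda$, \emph{not} of the global $J$; and $J_\Lambda$ is in general \emph{not} the restriction of $J$ to $\Lambda$. Your sentence ``both sides depend only on the values of $J$ on the finite set $x\phi$'' is true of the target formula but not of what you have actually computed, so the passage from $J_\Lambda$ to $J$ is a genuine gap. To close it you must either (i) assume from the start that the process is finite on $\XX$ (so no reduction is needed --- this is in fact how the paper uses the result, e.g.\ in Theorem~\ref{Supertheo3}), or (ii) prove pointwise convergence $J_\Lambda(x,y)\to J(x,y)$ as $\Lambda\uparrow\XX$, which is the route taken by Georgii--Yoo and requires a short operator-theoretic argument.
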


\section{Applications} \label{sec_applications}

In this section, we apply the upper bound given in Theorem \ref{Steinprop} to
provide an estimation of the distance between some classical point processes and
a Poisson or Cox point process. The residual computations essentially use the
results of Section \ref{sec_papangelou} on Papangelou intensities.

\subsection{Application to Poisson-like point
  processes}\label{subsec_applicationpoissonlike}


We focus in this subsection on Poisson-like point processes. In a sense, the
results presented here may be seen as some generalizations of the two following
results for respectively finite Poisson and Cox point processes, which have
already been shown in \cite{decreusefond_upper_2010}.

\begin{theo}\label{Superpoisson}

  Let $\zeta_1,\zeta_2$ be two Poisson point processes on $\XX$ with respective
  intensity measures $M_1$ and $M_2$. Then,
\begin{equation*}
  \mmmd_{TV}^*(\zeta_1,\zeta_2)\leq\mmmd_{TV}(M_1,M_2).
\end{equation*}

\end{theo}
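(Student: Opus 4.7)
The plan is to obtain this inequality as a direct corollary of Theorem \ref{Steinprop} combined with the explicit formula for the Papangelou intensity of a Poisson point process (Theorem \ref{Poissonprop2bis}).

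First, I would choose a common dominating reference measure so that both intensity measures have densities. A convenient choice is $\ell := M_1 + M_2$, with respect to which $M_1 = m_1 \ell$ and $M_2 = m_2 \ell$ by Radon--Nikodym. With this reference measure in place, Theorem \ref{Poissonprop2bis} tells us that the Papangelou intensity of $\zeta_2$ is simply the deterministic function $c_2(x,\phi) = m_2(x)$, independent of $\phi$; this reflects the lack of interaction between the atoms of a Poisson point process.

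Next, I would apply Theorem \ref{Steinprop} in the setting where the target Poisson process is $\zeta_M := \zeta_1$ (with density $m_1$) and the ``test'' finite point process is $\Phi := \zeta_2$ (with Papangelou intensity $c_2(x,\cdot) = m_2(x)$). The theorem yields
\begin{equation*}
  \mmmd_{TV}^*(\zeta_2, \zeta_1) \leq \int_\XX \Esp\bigl[\,|m_1(x) - c_2(x,\zeta_2)|\,\bigr]\,\ell(\d x) = \int_\XX |m_1(x) - m_2(x)|\,\ell(\d x),
\end{equation*}
where the expectation disappears because $c_2(x,\zeta_2) = m_2(x)$ almost surely. The right hand side is precisely $\mmmd_{TV}(M_1, M_2)$ by the integral representation of the total variation distance recalled in Section \ref{subsec_convergence}. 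Symmetry of $\mmmd_{TV}^*$ finishes the argument.

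There is no real obstacle here: the statement is essentially a repackaging of Theorem \ref{Steinprop} in the special case where the second process is itself Poisson, using the fact that Poisson Papangelou intensities are deterministic so that the $L^1$ bound over $\Esp[\,\cdot\,]$ collapses to an $L^1$ distance between densities. The only minor care needed is in the choice of dominating measure to ensure both processes satisfy the absolute continuity hypothesis of Theorem \ref{Steinprop}; any $\ell$ with $M_1, M_2 \ll \ell$ works, and $M_1 + M_2$ is the canonical choice.
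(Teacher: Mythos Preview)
Your proposal is correct and matches the paper's own proof essentially line for line: the paper also takes $\ell = M_1 + M_2$, notes that the Papangelou intensity of $\zeta_i$ with respect to this measure is $\frac{\d M_i}{\d(M_1+M_2)}$, and then concludes by combining Theorem~\ref{Steinprop} with the integral representation of $\mmmd_{TV}$.
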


\begin{theo}\label{Supercox}

  Let $\Gamma_1,\Gamma_2$ be two Cox point processes on $\XX$ directed by
  respective almost surely finite random measures $M_1$ and $M_2$. Then,
\begin{equation*}
  \mmmd_{TV}^*(\Gamma_1,\Gamma_2)\leq\mmmd_{TV}^*(M_1,M_2).
\end{equation*}

\end{theo}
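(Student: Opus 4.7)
The plan is to exploit the conditional Poisson structure of Cox processes together with Kantorovich duality, thereby lifting Theorem \ref{Superpoisson} from deterministic intensities to random ones.

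First I would use the Kantorovich-Rubinstein duality recalled in the preliminaries to rewrite
\[
\mmmd_{TV}^*(\Gamma_1,\Gamma_2)=\sup_{F\in\Lip_1(\N_\XX,\mmmd_{TV})}\bigl|\Esp[F(\Gamma_1)]-\Esp[F(\Gamma_2)]\bigr|.
\]
Fix such an $F$ and fix an arbitrary coupling $\bC\in\Sigma(\P_{M_1},\P_{M_2})$ of the directing random measures. On a suitable probability space I would realize $(M_1,M_2)$ with law $\bC$ and, conditionally on $(M_1,M_2)$, realize two Poisson point processes $\zeta_{M_1}$ and $\zeta_{M_2}$ with respective (random) intensities $M_1$ and $M_2$. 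By the very definition of a Cox process, the marginal distribution of $\zeta_{M_i}$ is that of $\Gamma_i$.

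Next I would condition on the $\sigma$-field generated by $(M_1,M_2)$. Given this $\sigma$-field, $\zeta_{M_1}$ and $\zeta_{M_2}$ are honest (almost surely finite) Poisson point processes with deterministic intensities $M_1(\omega)$ and $M_2(\omega)$. Applying Theorem \ref{Superpoisson} pointwise on the conditioning event yields
\[
\bigl|\Esp[F(\zeta_{M_1})-F(\zeta_{M_2})\,|\,M_1,M_2]\bigr|\leq\mmmd_{TV}(M_1,M_2)\quad\text{a.s.}
\]
Taking expectations and using the tower property gives $\bigl|\Esp[F(\Gamma_1)]-\Esp[F(\Gamma_2)]\bigr|\leq\Esp_\bC[\mmmd_{TV}(M_1,M_2)]$. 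Taking the infimum over couplings $\bC$ bounds this by $\mmmd_{TV}^*(M_1,M_2)$, and then taking the supremum over $F\in\Lip_1$ yields the theorem.

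The main technical delicacy — which is not really an obstacle but must be addressed to be honest — is the measurability of the conditional step: one must verify that Theorem \ref{Superpoisson}, applied for each fixed pair of measures, admits a jointly measurable selection when the intensities are random, so that the inequality passes cleanly under the outer expectation. This is standard once $F\in\Lip_1(\N_\XX,\mmmd_{TV})$ is fixed and both Poisson processes are realized jointly measurably with $(M_1,M_2)$ (for instance, via a single Poisson point process on $\XX\times\mathbb{R}_+$ thinned according to $M_i$), so no new idea is required beyond the conditional application of Theorem \ref{Superpoisson} and Kantorovich duality.
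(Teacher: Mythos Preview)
Your argument is correct and follows essentially the same route as the paper: couple the directing measures via an arbitrary $\bC\in\Sigma(\P_{M_1},\P_{M_2})$, apply Theorem~\ref{Superpoisson} conditionally on $(M_1,M_2)$, and then optimize over $\bC$. The only cosmetic difference is that the paper works on the primal side of the Kantorovich--Rubinstein distance (infimum over couplings of $(\Gamma_1,\Gamma_2)$, bounded by the induced coupling through $\bC$ and conditional optimal Poisson couplings), whereas you work on the dual side with a fixed $F\in\Lip_1$; the measurability caveat you raise is equally present (and equally routine) in the paper's formulation.
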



The topology used in Theorem \ref{Supercox} may be too strong. In this case, it
is relevant to mention that a similar result can be obtained for
$\mmmd_{TV}(=\mmmd_D^*)$ instead of $\mmmd_{TV}^*$. Indeed, since trivially, for
any Poisson point processes $\zeta_1$ and $\zeta_2$ with respective intensity
measures $M_1$ and $M_2$,
\begin{equation*}
  \mmmd_{D}^*(\zeta_1,\zeta_2)\leq \mmmd_D(M_1,M_2),
\end{equation*}

it follows, by adapting the proof of Theorem \ref{Supercox}, that
\begin{equation*}
  \mmmd_{TV}(\Gamma_1,\Gamma_2)\leq\mmmd_D^*(M_1,M_2).
\end{equation*}


In the following theorem, the Poisson point process of Theorem
\ref{Superpoisson} is replaced by a purely random point process.

\begin{theo}\label{Supertheo5}

  Let $M$ be a finite measure on $\XX$ such that $M(\d x)=m(x)\d x$ and
  $\mu\in\bbbm_1$ such that $\mu(\d x)=\frac{m(x)}{M(\XX)}\d x$. Let $\Phi$ be a
  purely random point process on $\XX$ supported by $\mu$ and the distribution
  $(p_n)_{n\in\bbbn_0}$ such that $p_n\neq0$ for any $n\in\bbbn_0$. Then,

$$\mmmd_{TV}^*(\Phi,\zeta_M)\leq\displaystyle\sum_{n=0}^{+\infty}\big|(n+1)p_{n+1}-M(\XX)p_n\big|,$$

where $\zeta_M$ is the Poisson point process on $\XX$ with intensity measure
$M$.

\end{theo}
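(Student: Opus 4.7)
The plan is to apply the fundamental bound from Theorem \ref{Steinprop} to the pair $(\Phi,\zeta_M)$ and then reduce the resulting integral to the claimed series using the explicit form of the Papangelou intensity of a purely random point process given in Theorem \ref{purelyrandomprop2bis}. Since $\zeta_M$ has intensity $M(\d x)=m(x)\ell(\d x)$ and $\Phi$ has Papangelou intensity $c$, we may write
\begin{equation*}
  \mmmd_{TV}^*(\Phi,\zeta_M)\leq \int_\XX \Esp\bigl[|m(x)-c(x,\Phi)|\bigr]\,\ell(\d x).
\end{equation*}

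Next, I would substitute the Papangelou intensity of the purely random point process. With $q(x)=m(x)/M(\XX)$, Theorem \ref{purelyrandomprop2bis} gives, on the event $\{|\Phi|=n\}$,
\begin{equation*}
  c(x,\Phi)=(n+1)\frac{p_{n+1}}{p_n}\,q(x)=(n+1)\frac{p_{n+1}}{p_n}\,\frac{m(x)}{M(\XX)}.
\end{equation*}
Factoring out $m(x)/M(\XX)$ from $|m(x)-c(x,\Phi)|$ on this event, the integrand on $\{|\Phi|=n\}$ becomes $\frac{m(x)}{M(\XX)}\bigl|M(\XX)-(n+1)p_{n+1}/p_n\bigr|$, a quantity which is independent of $x$ once $m(x)/M(\XX)$ has been pulled out.

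Then I would swap expectation and integration using Fubini (everything is nonnegative) and integrate in $x$ first: the integral of $m(x)/M(\XX)$ against $\ell$ equals $1$, so one full factor of $M(\XX)$ cancels. It remains to compute
\begin{equation*}
  \Esp\!\left[\left|M(\XX)-(|\Phi|+1)\frac{p_{|\Phi|+1}}{p_{|\Phi|}}\right|\right]
  =\sum_{n=0}^{+\infty} p_n\left|M(\XX)-(n+1)\frac{p_{n+1}}{p_n}\right|,
\end{equation*}
which, after pulling $p_n$ inside the absolute value, becomes exactly $\sum_{n=0}^{+\infty}|(n+1)p_{n+1}-M(\XX)p_n|$, as desired.

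There is essentially no obstacle: the hypothesis $p_n\neq 0$ for all $n$ ensures Theorem \ref{purelyrandomprop2bis} applies and that no division by zero occurs, and the conditioning on $|\Phi|=n$ is legitimate because the number of points is the natural decomposition of the law of a purely random point process. The only care needed is the trivial bookkeeping to check that the two factors of $M(\XX)$ (one from $q$, one from the ratio $(n+1)p_{n+1}/p_n$ compared with $m(x)=q(x)M(\XX)$) combine correctly so that the final expression is the claimed $L^1$-type discrepancy between the sequence $((n+1)p_{n+1})_n$ and $(M(\XX)p_n)_n$, a quantity which vanishes exactly when $(p_n)_n$ is the Poisson distribution with parameter $M(\XX)$.
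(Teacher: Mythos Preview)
Your proposal is correct and follows essentially the same approach as the paper: apply Theorem~\ref{Steinprop}, plug in the explicit Papangelou intensity from Theorem~\ref{purelyrandomprop2bis}, decompose the expectation over the events $\{|\Phi|=n\}$, factor out $m(x)/M(\XX)$, and integrate in $x$ to reduce to the claimed series. The paper's proof is slightly more terse but the logic and the computation are identical.
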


Note that in the previous inequality, the right hand side is null as soon as
$\Phi$ is a purely random point process supported by $\mu$ and a Poisson
distribution with parameter $M(\XX)$, that is, as expected, a Poisson point
processes with intensity measure $M$.

In the following theorem, we apply our upper bound to conditional Poisson point
processes.


\begin{theo}\label{Supertheo7}

  Let $\Phi$ be a Poisson point process with finite intensity measure $M(\d
  x)=m(x)\d x$. Let $\Phi_C$ be the conditional Poisson point process associated
  to $\Phi$ with intensity measure $M$ and condition $C\in\Ncal_\XX$. Then,
\begin{equation*}
  \mmmd_{TV}^*(\Phi_C,\Phi)\leq\int_\XX m(x)\bbbp(\Phi_C x\notin C)\d x.
\end{equation*}

\end{theo}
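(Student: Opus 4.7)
The plan is to apply the master bound from Theorem \ref{Steinprop} with the conditional Poisson point process $\Phi_C$ playing the role of the generic finite point process, and the unconditioned Poisson point process $\Phi$ playing the role of the target Poisson process $\zeta_M$. Once we have done this, Theorem \ref{Steinprop} gives
\begin{equation*}
  \mmmd_{TV}^*(\Phi_C,\Phi)\leq\int_\XX \Esp\bigl[|m(x)-c_C(x,\Phi_C)|\bigr]\,\d x,
\end{equation*}
where $c_C$ denotes (a version of) the Papangelou intensity of $\Phi_C$. Everything then reduces to controlling the integrand, which is where the structural information about conditional Poisson point processes enters.

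The next step is to use Theorem \ref{CondPPPprop2bis}, which identifies
\begin{equation*}
  c_C(x,\phi)=m(x)\,\textbf{1}_{\{\phi\cup\{x\}\in C\}}\,\textbf{1}_{\{\phi\in C\}}.
\end{equation*}
Evaluated at $\phi=\Phi_C$, the second indicator is almost surely equal to $1$ by the very definition of $\Phi_C$ (it is only kept when its realization lies in $C$). Hence, almost surely,
\begin{equation*}
  c_C(x,\Phi_C)=m(x)\,\textbf{1}_{\{\Phi_C x\in C\}},
\end{equation*}
so that $|m(x)-c_C(x,\Phi_C)|=m(x)\,\textbf{1}_{\{\Phi_C x\notin C\}}$ almost surely. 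Taking expectations and integrating in $x$ yields exactly the bound claimed in the statement.

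The argument is very short and mostly bookkeeping, so the main ``obstacle''—if it can be called one—is simply checking that the two cited results apply as written: that $\Phi_C$ is a legitimate finite point process on $\XX$ (which it is, since $M$ is assumed finite and the geometric construction of $\Phi_C$ in Section \ref{subsec_poisson_based} gives a random element of $\widehat{N}_\XX$), and that the version of its Papangelou intensity provided by Theorem \ref{CondPPPprop2bis} indeed coincides almost surely with $c_C$ in the sense needed by Theorem \ref{Steinprop}. Once these formal points are in place, the displayed inequality follows by direct substitution, with no further estimation required.
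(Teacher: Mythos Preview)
Your proof is correct and follows essentially the same approach as the paper: apply Theorem \ref{Steinprop}, plug in the Papangelou intensity from Theorem \ref{CondPPPprop2bis}, and use that $\Phi_C\in C$ almost surely to drop the second indicator. The paper's version is just a more compressed write-up of exactly these steps.
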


We may notice that the precision of the estimation is directly provided by the
weakness of the condition $C$. In particular, for $C=N_\XX$, $\Phi_C$ is a
Poisson point process with intensity measure $M$ and the bound is null. This
last result on conditional Poisson point processes may also be observed in the
more particular cases of hardcore and bounded Poisson point processes, which are
respectively given in the two following corollaries.

\begin{cor}\label{cor1Supertheo7}

  Let $\Phi$ be a Poisson point process with finite intensity $\lambda$
  restricted to a relatively compact subset $\Lambda$ of $\XX=\bbbr^d$. Let
  $\Phi_{R}$ be the hardcore Poisson point process associated to $\Phi$ with
  parameter measure $M$ and parameter $R>0$. Then,
\begin{equation*}
  \mmmd_{TV}^*(\Phi_{R},\Phi)\leq \dfrac{\lambda^2|\Lambda|}{p_{R}}V_d(R)
\end{equation*}

where
\begin{equation*}
  p_R=\bbbp(\forall x,y\in\Phi,x\neq y\implies \mmmd_{\XX}(x,y)\geq R)
\end{equation*}

and
\begin{equation*}
  V_d(R)=\dfrac{\pi^{\frac{d}{2}}R^d}{\Gamma(\frac{d}{2})}.
\end{equation*}

\end{cor}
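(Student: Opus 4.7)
The plan is to apply Theorem~\ref{Supertheo7} with the specific choices $C = C_R$, $m(x) = \lambda\,\textbf{1}_\Lambda(x)$, and then estimate the probability $\P(\Phi_R x \notin C_R)$ that appears inside the integral. Since $\Phi_R$ already lies in $C_R$ by construction, the configuration $\Phi_R \cup\{x\}$ fails to satisfy the hardcore condition precisely when there exists $y\in\Phi_R$ with $\mmmd_\XX(x,y) < R$, i.e.\ when $\Phi_R\bigl(B(x,R)\bigr)\geq 1$ (the point $x$ itself contributing nothing as $\Phi_R$ is a simple point process and we are counting neighbours at positive distance). By Markov's inequality this probability is at most $\Esp[\Phi_R(B(x,R))]$.

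Next I would estimate $\Esp[\Phi_R(A)]$ for $A=B(x,R)$ by unfolding the definition of the conditional Poisson point process. Since $\Phi_R$ has distribution $\P(\Phi\in\,\cdot\,|\,\Phi\in C_R)$, for any bounded set $A$,
\begin{equation*}
\Esp[\Phi_R(A)] = \frac{1}{p_R}\Esp\bigl[\Phi(A)\,\textbf{1}_{\{\Phi\in C_R\}}\bigr]\leq \frac{1}{p_R}\Esp[\Phi(A)] = \frac{\lambda\,|A\cap\Lambda|}{p_R}.
\end{equation*}
Taking $A=B(x,R)$ gives the crude but sufficient bound $\Esp[\Phi_R(B(x,R))]\leq \lambda V_d(R)/p_R$, using the paper's convention $V_d(R)=\pi^{d/2}R^d/\Gamma(d/2)$ for the volume of the $d$-ball.

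Finally, I would inject these estimates into the bound of Theorem~\ref{Supertheo7} and integrate over $\Lambda$:
\begin{equation*}
\mmmd_{TV}^*(\Phi_R,\Phi)\leq \int_\Lambda \lambda\cdot\frac{\lambda V_d(R)}{p_R}\,\d x = \frac{\lambda^2|\Lambda|}{p_R}\,V_d(R),
\end{equation*}
which is exactly the announced inequality. There is no real obstacle here, as the corollary is essentially a specialization of Theorem~\ref{Supertheo7}; the only mildly delicate point is the transition from the event $\{\Phi_R x\notin C_R\}$ to the geometric event $\{\Phi_R(B(x,R))\geq 1\}$, which relies on the fact that $\Phi_R$ itself is already in $C_R$ so the only possible new violation of the hardcore constraint comes from a neighbour of the added point $x$.
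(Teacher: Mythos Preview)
Your argument is correct and reaches the same bound, but the route differs from the paper's. The paper also starts from Theorem~\ref{Supertheo7}, but then expands $\P(\Phi_R x\notin C_R)$ explicitly through the Janossy density~\eqref{CondPPPprop2} of $\Phi_R$, obtaining a series in $k$; it then drops the indicator $\textbf{1}_{C_R}(\{x_1,\dots,x_k\})$ (the factor $1/p_R$ is already in the Janossy normalization), uses the lower bound $\textbf{1}_{C_R}(\{x_1,\dots,x_k,x\})\ge \prod_{i=1}^k\textbf{1}_{\mmmd_\XX(x_i,x)\ge R}$, and finishes with the elementary volume estimate $\int_\Lambda\bigl(1-\prod_{i=1}^k\textbf{1}_{\mmmd_\XX(x_i,x)\ge R}\bigr)\d x\le kV_d(R)$, which after resumming the Poisson series yields $\lambda^2|\Lambda|V_d(R)/p_R$. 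Your approach bypasses the series entirely: identifying $\{\Phi_R x\notin C_R\}=\{\Phi_R(B(x,R))\ge 1\}$, applying Markov, and bounding $\Esp[\Phi_R(A)]\le p_R^{-1}\Esp[\Phi(A)]$ via the conditional representation. This is shorter and more probabilistic in flavour; the paper's computation, by contrast, makes the role of the Janossy structure explicit and would adapt more mechanically to other decreasing conditions $C$. Both ultimately exploit the same two facts---the $1/p_R$ coming from the conditioning and the $V_d(R)$ coming from the volume of a single exclusion ball---just packaged differently.
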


\begin{cor}\label{cor2Supertheo7}

  Let $\Phi$ be a Poisson point process with finite intensity measure $M(\d
  x)=m(x)\d x$. Let $\Phi_{N}$ be the bounded Poisson point process associated
  to $\Phi$ with parameter measure $M$ and parameter $N\in\bbbn_0$. Then,
\begin{equation*}
  \mmmd_{TV}^*(\Phi_{N},\Phi)\leq\dfrac{e^{-M(\XX)}}{p_{N}}\dfrac{(M(\XX))^{N+1}}{N!}
\end{equation*}

where $p_N=\bbbp(\Phi(\XX)\leq N)$.

\end{cor}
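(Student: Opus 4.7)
The plan is to apply Theorem \ref{Supertheo7} directly with the specific condition $C = C_N = \{\phi \in \widehat{N}_\XX : \phi(\XX) \leq N\}$, and then simplify the resulting integral by exploiting the very simple geometric structure of this condition (it only involves the total number of points, not the spatial locations).

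First I would invoke Theorem \ref{Supertheo7} to get
\begin{equation*}
\mmmd_{TV}^*(\Phi_N,\Phi) \leq \int_\XX m(x)\, \bbbp(\Phi_N + \delta_x \notin C_N)\, \d x.
\end{equation*}
The key observation is that the event $\{\Phi_N + \delta_x \notin C_N\}$ is simply the event $\{\Phi_N(\XX) + 1 > N\}$, which, since $\Phi_N(\XX) \leq N$ almost surely by construction of the conditional Poisson point process, is equivalent to $\{\Phi_N(\XX) = N\}$. Crucially, this event does not depend on $x$, so the integrand factors and the integral reduces to $M(\XX) \cdot \bbbp(\Phi_N(\XX) = N)$.

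Next I would compute $\bbbp(\Phi_N(\XX) = N)$ using the definition of the conditional Poisson point process as $\Phi$ conditioned on the event $\{\Phi(\XX) \leq N\}$. This gives
\begin{equation*}
\bbbp(\Phi_N(\XX) = N) = \frac{\bbbp(\Phi(\XX) = N)}{\bbbp(\Phi(\XX) \leq N)} = \frac{1}{p_N}\, e^{-M(\XX)} \frac{M(\XX)^N}{N!},
\end{equation*}
since $\Phi(\XX)$ is Poisson-distributed with parameter $M(\XX)$. Combining these two ingredients yields the claimed bound.

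There is essentially no obstacle in this argument: everything is a direct specialization of the previously proved Theorem \ref{Supertheo7}, and the only ingredient besides that theorem is the elementary observation that adding one point to a configuration with exactly $N$ points takes it outside $C_N$. The proof is therefore just a few lines of bookkeeping once the structure of $C_N$ and the conditional law of $\Phi_N$ are unwound.
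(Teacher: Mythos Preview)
Your proposal is correct and follows essentially the same approach as the paper: apply Theorem \ref{Supertheo7} with $C=C_N$ and then compute the probability $\bbbp(\Phi_N x\notin C_N)$. The only difference is cosmetic: the paper expands this probability via the Janossy function formula (\ref{CondPPPprop2}) and observes that only the $k=N$ term of the resulting series survives, whereas you reach the same conclusion by the cleaner probabilistic observation that $\{\Phi_N+\delta_x\notin C_N\}=\{\Phi_N(\XX)=N\}$ is independent of $x$, so the integral factors as $M(\XX)\cdot\bbbp(\Phi_N(\XX)=N)$.
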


\subsection{Application to weakly repulsive point
  processes}\label{subsec_applicationrepulsive}


In this subsection, we apply Theorem \ref{Steinprop} to some transformations of
weakly repulsive point processes. The two main underlying intuitions are the
following: first, basic operations on point processes such that independent
superposition or thinning weaken the interactions between the particles therein
and then, in a way, inject some independence in the point process. It seems thus
possible to build with these operations a sequence tending to a point process
without interdependence between its particles, that is, a Poisson point
process. Second, it appears that point processes with repulsion provide a more
favorable ground for the construction of such a sequence: a point process with
some clusters might for instance have too strong and too numerous interactions
between the particles of a given cluster, and the transformations described
above are probably not sufficient to delete in a significant way the links of
dependence in this cluster, which leads us to restrict our analyze to weakly
repulsive point process.

In the following theorem, we consider a superposition of finite weakly repulsive
point processes.

\begin{theo}\label{supertheo}

  For any $n\in\bbbn$, let $\Phi_n$ the superposition of $n$ independent, finite
  and weakly repulsive point processes $\Phi_{n,1},\dots,\Phi_{n,n}$, with
  respective correlation functions $\rho_{n,1},\dots,\rho_{n,n}$ and let
  $\zeta_M$ be a Poisson point process with intensity measure $M(\d
  x)=m(x)\ell(\d x)$. Then,
\begin{equation*}
  \mmmd_{TV}^*(\Phi_n,\zeta_M)\leq R_n+2n\Big(\displaystyle\max_{i\in\{1,\dots,n\}}\int_{\XX}\rho_{n,i}(x)\ell(\d x)\Big)^2,
\end{equation*}

where
\begin{equation*}
  R_n:=\int_\XX \Big|\displaystyle\sum_{i=1}^n \rho_{n,i}(x)-m(x)\Big|\ell(\d x).
\end{equation*}

\end{theo}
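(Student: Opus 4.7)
The plan is to apply Theorem~\ref{Steinprop} to the pair $(\Phi_n,\zeta_M)$, to decompose the Papangelou intensity of the superposition via Theorem~\ref{papsuperposition}, and then to control the remaining fluctuations using the three lemmas on weakly repulsive point processes from Section~\ref{sec_papangelou}.

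First, Theorem~\ref{Steinprop} delivers the Stein bound
\begin{equation*}
\mmmd_{TV}^*(\Phi_n,\zeta_M)\leq\int_\XX\Esp\bigl[|m(x)-c(x,\Phi_n)|\bigr]\,\ell(\d x),
\end{equation*}
and by Theorem~\ref{papsuperposition} I may take $c(x,\Phi_n)=\sum_{i=1}^n c_{n,i}(x,\Phi_{n,i})$, where $c_{n,i}$ is a Papangelou intensity of $\Phi_{n,i}$. Recalling from \eqref{Papangelouprop2} that $\rho_{n,i}(x)=\Esp[c_{n,i}(x,\Phi_{n,i})]$, I insert $\sum_i\rho_{n,i}(x)$ inside the absolute value and apply the triangle inequality to split the integrand as
\begin{equation*}
\Esp\bigl[|m(x)-c(x,\Phi_n)|\bigr]\leq\Bigl|m(x)-\sum_{i=1}^n\rho_{n,i}(x)\Bigr|+\sum_{i=1}^n\Esp\bigl[|c_{n,i}(x,\Phi_{n,i})-\rho_{n,i}(x)|\bigr].
\end{equation*}
Integration of the first summand produces exactly $R_n$.

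For each term of the residual sum, the weak repulsiveness of $\Phi_{n,i}$ lets me invoke Lemma~\ref{repulsivelem2}, which yields the pointwise estimate $\Esp[|c_{n,i}(x,\Phi_{n,i})-\rho_{n,i}(x)|]\leq 2\bigl(c_{n,i}(x,\varnothing)-\rho_{n,i}(x)\bigr)$. The remaining task is therefore to bound, for each $i$, the integrated gap
\begin{equation*}
I_{n,i}:=\int_\XX\bigl(c_{n,i}(x,\varnothing)-\rho_{n,i}(x)\bigr)\,\ell(\d x)
\end{equation*}
by $\bigl(\int_\XX\rho_{n,i}\,\d\ell\bigr)^2$, after which the elementary inequality $\sum_{i=1}^n I_{n,i}\leq n\max_i I_{n,i}$ combined with the factor $2$ inherited from Lemma~\ref{repulsivelem2} produces the target bound.

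This last control is the main technical obstacle. My plan is to combine three ingredients: Lemma~\ref{repulsivelem1}, which gives the pointwise inequality $c_{n,i}(x,\varnothing)-\rho_{n,i}(x)\leq(1-p_{0,n,i})c_{n,i}(x,\varnothing)$ with $p_{0,n,i}=\P(|\Phi_{n,i}|=0)$; Lemma~\ref{Papangeloulem1}, which identifies the integral $\int_\XX c_{n,i}(x,\varnothing)\,\ell(\d x)=\P(|\Phi_{n,i}|=1)/p_{0,n,i}$; and the Markov-type bounds $1-p_{0,n,i}\leq\Esp[|\Phi_{n,i}|]=\int\rho_{n,i}\,\d\ell$ and $\P(|\Phi_{n,i}|=1)\leq\Esp[|\Phi_{n,i}|]$, valid for any finite point process. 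A careful juggling of these four estimates will yield $I_{n,i}\leq\bigl(\int\rho_{n,i}\,\d\ell\bigr)^2$, concluding the proof.
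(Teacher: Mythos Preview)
Your plan is sound up to and including the appeal to Lemma~\ref{repulsivelem2}, but the final claim---that the four ingredients you list can be ``juggled'' into $I_{n,i}\leq\bigl(\int_\XX\rho_{n,i}\,\d\ell\bigr)^2$---fails. Chaining Lemma~\ref{repulsivelem1} and Lemma~\ref{Papangeloulem1} the best you can obtain is
\[
I_{n,i}\;\leq\;(1-p_{0,n,i})\int_\XX c_{n,i}(x,\varnothing)\,\ell(\d x)
\;=\;(1-p_{0,n,i})\,\frac{p_{1,n,i}}{p_{0,n,i}}
\;\leq\;\frac{(1-p_{0,n,i})^2}{p_{0,n,i}}
\;\leq\;\frac{1}{p_{0,n,i}}\Bigl(\int_\XX\rho_{n,i}\,\d\ell\Bigr)^{2},
\]
and the factor $1/p_{0,n,i}$ cannot be removed. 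A concrete counterexample: let $\Phi_{n,i}$ be empty with probability $p_0$ and otherwise consist of a single point uniform on $[0,1]$. This process is weakly repulsive, $\rho(x)=1-p_0$, $c(x,\varnothing)=(1-p_0)/p_0$, so $I_{n,i}=(1-p_0)^2/p_0$, which is strictly larger than $\bigl(\int\rho\bigr)^2=(1-p_0)^2$ for every $p_0<1$. The loss occurs precisely because Lemma~\ref{repulsivelem2} overestimates the integrand on the event $\{\Phi_{n,i}=\varnothing\}$: there the true contribution is $p_{0,n,i}\,|c_{n,i}(x,\varnothing)-\rho_{n,i}(x)|$, but your bound replaces it by $2(c_{n,i}(x,\varnothing)-\rho_{n,i}(x))$, discarding the crucial prefactor $p_{0,n,i}$.

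The paper does not go through Lemma~\ref{repulsivelem2}. Instead it decomposes
$A_{n,i}=\int_\XX\Esp[|c_{n,i}(x,\Phi_{n,i})-\rho_{n,i}(x)|]\,\ell(\d x)$
according to whether $|\Phi_{n,i}|=0$ or $|\Phi_{n,i}|\geq1$. On the event $\{|\Phi_{n,i}|=0\}$ the contribution is $B_{n,i}=p_{0,n,i}\int|c_{n,i}(x,\varnothing)-\rho_{n,i}(x)|\,\ell(\d x)$, and the retained factor $p_{0,n,i}$ is exactly what cancels the $1/p_{0,n,i}$ hidden in $\int c_{n,i}(\cdot,\varnothing)\,\d\ell=p_{1,n,i}/p_{0,n,i}$, giving $B_{n,i}\leq(1-p_{0,n,i})^2$ via Lemma~\ref{repulsivelem1} and Lemma~\ref{Papangeloulem1}. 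On $\{|\Phi_{n,i}|\geq1\}$ the paper uses the crude bound $|c_{n,i}(x,\Phi_{n,i})-\rho_{n,i}(x)|\leq c_{n,i}(x,\varnothing)$ together with the prefactor $1-p_{0,n,i}$. The structural point you are missing is that one must keep the weight $p_{0,n,i}$ attached to the empty-configuration term before integrating $c_{n,i}(\cdot,\varnothing)$; your route via Lemma~\ref{repulsivelem2} throws that weight away.
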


\begin{rem}\label{supertheocor1}

  Under the assumptions and notations of Theorem \ref{supertheo}, and supposing
  moreover that there exists a real constant $C$ such that for any $n\in\bbbn$,
\begin{equation*}
  \displaystyle\max_{i\in\{1,\dots,n\}}\int_{\XX}\rho_{n,i}(x)\ell(\d x)\leq\dfrac{C}{n},
\end{equation*}

one has for any $n\in\bbbn$,
\begin{equation*}
  \mmmd_{TV}^*(\Phi_n,\zeta_M)\leq R_n+\dfrac{2C^2}{n}.
\end{equation*}

\end{rem}

Noting that a $(-1/n)$-determinantal point process may be written as a
superposition of determinantal point processes, the following corollary is a
direct consequence of Theorem \ref{supertheo}.

\begin{cor}\label{supertheocor2}

  Let $n\in\bbbn$, $\Phi_n$ be a finite $(-1/n)$-determinantal point process
  with kernel $K$ and $\zeta$ be a Poisson point process with intensity measure
  $K(x,x)\d x$. Then,
\begin{equation*}
  \mmmd_{TV}^*(\Phi_n,\zeta)\leq\dfrac{2}{n}\Big(\int_\XX K(x,x)\d x\Big)^2.
\end{equation*}

\end{cor}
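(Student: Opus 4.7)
The plan is to exploit the structural identity, mentioned in the discussion on $\alpha$-DPPs, that a $(-1/n)$-determinantal point process $\Phi_n$ with kernel $K$ is precisely the independent superposition of $n$ determinantal point processes $\Phi_{n,1},\dots,\Phi_{n,n}$, each with kernel $K/n$. Once this representation is in hand, the corollary should follow by direct application of Theorem \ref{supertheo}, so the work reduces to checking its hypotheses and computing the two terms on the right-hand side.

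First, I would verify that each summand $\Phi_{n,i}$ is finite and weakly repulsive. Finiteness follows from the fact that $\Phi_n$ is assumed finite (so each component, being dominated in distribution, is a.s. finite). Weak repulsiveness is immediate from Theorem \ref{DPPpap}: a determinantal point process ($\alpha=-1$) is repulsive in the sense of \eqref{repulsive}, and repulsiveness implies weak repulsiveness by comparing with $\omega=\varnothing$. Next, I would compute the one-point correlation function of each $\Phi_{n,i}$: from the definition of the correlation function of an $\alpha$-DPP, one has $\rho_{n,i}(x) = (K/n)(x,x) = K(x,x)/n$.

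With these ingredients, Theorem \ref{supertheo} applies with target intensity $m(x) = K(x,x)$. The remainder term
\begin{equation*}
R_n = \int_\XX \Big|\sum_{i=1}^n \rho_{n,i}(x) - K(x,x)\Big|\ell(\d x) = \int_\XX \Big|n \cdot \frac{K(x,x)}{n} - K(x,x)\Big|\ell(\d x) = 0
\end{equation*}
vanishes identically, which is the crucial feature of the $(-1/n)$-construction. For the second term,
\begin{equation*}
\max_{i\in\{1,\dots,n\}}\int_\XX \rho_{n,i}(x)\,\ell(\d x) = \frac{1}{n}\int_\XX K(x,x)\,\d x,
\end{equation*}
so the bound from Theorem \ref{supertheo} becomes $2n \cdot \bigl(\tfrac{1}{n}\int_\XX K(x,x)\,\d x\bigr)^2 = \tfrac{2}{n}\bigl(\int_\XX K(x,x)\,\d x\bigr)^2$, as required.

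The only genuinely non-routine point is the superposition representation of the $(-1/n)$-DPP itself, but this is stated as a known fact in the subsection on $\alpha$-determinantal point processes (verifiable via Laplace transforms using that the Laplace functional of an $\alpha$-DPP is a Fredholm determinant of the form $\Det(I+\alpha K(1-e^{-f}))^{-1/\alpha}$, which factors as an $n$-fold product when $\alpha=-1/n$ and kernels are scaled by $1/n$). Granting this, the rest is the bookkeeping above, and I expect no real obstacle beyond being careful that the hypotheses of Theorem \ref{supertheo} are satisfied componentwise.
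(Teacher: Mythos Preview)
Your proposal is correct and follows essentially the same approach as the paper: both use the superposition representation of the $(-1/n)$-DPP as $n$ independent DPPs with kernel $K/n$, then apply Theorem \ref{supertheo} (the paper routes through Remark \ref{supertheocor1}, which is just Theorem \ref{supertheo} specialized to the case $\int\rho_{n,i}\le C/n$), obtaining $R_n=0$ and $C=\int_\XX K(x,x)\,\d x$. Your write-up is slightly more careful in explicitly checking weak repulsiveness via Theorem \ref{DPPpap}, which the paper leaves implicit.
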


In the next corollary, we present another consequence of Theorem
\ref{supertheo}, which appears when drawing independent and identically
distributed (i.i.d.) points on the real half-axis.

\begin{cor}\label{supertheocor3}

  Let $h$ be a probability density function on $[0,1]$ such that
  $h(0_+):=\lim_{x\to0_+}h(x)\in\bbbr$, and $\Lambda$ be a compact subset of
  $\bbbr_+$. For any $n\in\bbbn$, assuming that $X_{n,1},\dots,X_{n,n}$ are $n$
  i.i.d. random variables with probability density function
  $h_n=\frac{1}{n}h(\frac{1}{n} \cdot)$, the point process $\Phi_n$ defined as
  $\Phi_n=\{X_{n,1},\dots,X_{n,n}\}\cap\Lambda$ verifies the following
  inequality:

$$\mmmd_{TV}^*(\Phi_n,\zeta)\leq \int_\Lambda\Big|h\Big(\frac{1}{n}x\Big)-h(0_+)\Big|\d x + \dfrac{2}{n} \Big(\int_\Lambda h\Big(\frac{1}{n}x\Big) \d x\Big)^2 $$

where $\zeta$ is the homogeneous Poisson point process with intensity $h(0_+)$
reduced to $\Lambda$.

\end{cor}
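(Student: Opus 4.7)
The plan is to apply Theorem~\ref{supertheo} after realizing $\Phi_n$ as the superposition of $n$ i.i.d.\ single‑point (or empty) processes.

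\textbf{Step 1: Decomposition.} For each $i\in\{1,\dots,n\}$, set $\Phi_{n,i}:=\{X_{n,i}\}\cap\Lambda$. Since the $X_{n,i}$ are i.i.d., the processes $\Phi_{n,1},\dots,\Phi_{n,n}$ are independent, finite (each has at most one point), and their superposition equals $\Phi_n$.

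\textbf{Step 2: Weak repulsiveness.} I would compute the Papangelou intensity $c_{n,i}$ of $\Phi_{n,i}$ directly from the reduced Campbell measure, rather than invoking Theorem~\ref{purelyrandomprop2bis} (which requires $p_k\neq 0$ for all $k$ and would not apply here since $\bbbp(|\Phi_{n,i}|\ge 2)=0$). A direct identification of the Campbell measure yields
\begin{equation*}
  c_{n,i}(x,\varnothing)=\frac{h_n(x)\textbf{1}_{\{x\in\Lambda\}}}{1-\int_\Lambda h_n(y)\,\d y}, \qquad c_{n,i}(x,\phi)=0 \text{ for } |\phi|\ge 1.
\end{equation*}
Weak repulsiveness, $c_{n,i}(x,\phi)\le c_{n,i}(x,\varnothing)$, is then immediate because $\Phi_{n,i}$ carries at most one point and so the second clause is trivially dominated by the first.

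\textbf{Step 3: Correlation functions and final computation.} Using $\rho_{n,i}(x)=\Esp[c_{n,i}(x,\Phi_{n,i})]$, a short calculation gives $\rho_{n,i}(x)=h_n(x)\textbf{1}_{\{x\in\Lambda\}}=\tfrac{1}{n}h(x/n)\textbf{1}_{\{x\in\Lambda\}}$. Summing, $\sum_{i=1}^n\rho_{n,i}(x)=h(x/n)\textbf{1}_{\{x\in\Lambda\}}$. Writing the Poisson target $\zeta$ as having intensity $m(x)=h(0_+)\textbf{1}_{\{x\in\Lambda\}}$, Theorem~\ref{supertheo} produces
\begin{equation*}
  \mmmd_{TV}^*(\Phi_n,\zeta)\le \int_\Lambda|h(x/n)-h(0_+)|\,\d x + 2n\Bigl(\max_{1\le i\le n}\int_\Lambda\rho_{n,i}(x)\,\d x\Bigr)^2,
\end{equation*}
and substituting $\int_\Lambda\rho_{n,i}(x)\,\d x=\tfrac{1}{n}\int_\Lambda h(x/n)\,\d x$ converts the error term into $\tfrac{2}{n}\bigl(\int_\Lambda h(x/n)\,\d x\bigr)^2$, which is exactly the claimed bound.

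The only non‑routine point is Step~2, where the Papangelou intensity of a degenerate (0 or 1 point) process must be verified by hand; after that, everything is a direct substitution into Theorem~\ref{supertheo}.
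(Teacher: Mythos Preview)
Your proposal is correct and follows exactly the paper's approach: apply Theorem~\ref{supertheo} with $\Phi_{n,i}=\{X_{n,i}\}\cap\Lambda$, which the paper states as a one-line proof. Your Steps~2 and~3 simply make explicit the verification of weak repulsiveness and the computation of $\rho_{n,i}$ that the paper leaves to the reader; in particular, your observation that Theorem~\ref{purelyrandomprop2bis} does not apply (since $p_k=0$ for $k\ge 2$) and that the Papangelou intensity must be computed directly is a genuine point the paper glosses over.
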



By combining an independent superposition with an independent thinning, it
becomes possible to enlarge the assumptions on the point processes of the
superposition: in the following theorem, we replace the weak repulsiveness by a
condition on the variance of their Papangelou intensities.

\begin{theo}\label{supertheo1bis}

  Let $\Phi$ be a point process on a compact subset $\Lambda$ of $\XX$ with
  Papangelou intensity $c$ and intensity measure $M(\d x)=m(x)\d x$. Let $\zeta$
  be a Poisson point process with intensity measure $M$. For any $n\in\bbbn$,
  the point process $\Phi_n$ is defined by:
\begin{equation*}
  \Phi_n=\displaystyle\sum_{k=1}^n\dfrac{1}{n}\circ\Phi^{(k)},
\end{equation*}

where $\Phi^{(1)},\dots,\Phi^{(n)}$ are $n$ independent copies of $\Phi$. If
there exists an integrable function $K:\Lambda\to\bbbr_+$ such that, for any
$x\in\Lambda$, $\Var[c(x,\Phi)]\leq K(x)$, then
\begin{equation*}
  \mmmd_{TV}^*(\Phi_n,\zeta)\leq\dfrac{1}{\sqrt{n}}\int_\Lambda \sqrt{K(x)} \d x.
\end{equation*}

\end{theo}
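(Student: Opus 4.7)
The strategy is to bring Theorem \ref{Steinprop} to bear on $\Phi_n$, which first requires an explicit description of its Papangelou intensity. Since each summand $\tfrac{1}{n}\circ\Phi^{(k)}$ is a constant thinning of $\Phi^{(k)}$, Theorem \ref{papthinning1} gives its Papangelou intensity as $\tfrac{1}{n}\,\Esp[c(x,\Phi^{(k)})\mid \tfrac{1}{n}\circ\Phi^{(k)}]$, and Theorem \ref{papsuperposition} then yields the (ersatz) Papangelou intensity of the independent superposition
\begin{equation*}
c_n(x,\Phi_n) \;=\; \frac{1}{n}\sum_{k=1}^n \Esp\bigl[c(x,\Phi^{(k)})\,\bigm|\, \tfrac{1}{n}\circ\Phi^{(k)}\bigr].
\end{equation*}
Property \eqref{Papangelouprop2} combined with the assumption that $\Phi$ has intensity measure $M(\d x)=m(x)\,\d x$ gives $\Esp[c(x,\Phi)]=m(x)$, and therefore $\Esp[c_n(x,\Phi_n)]=m(x)$, so $\Phi_n$ has intensity measure $M$, matching that of $\zeta$.

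Theorem \ref{Steinprop} reduces the bound to estimating $\int_\Lambda \Esp[\lvert c_n(x,\Phi_n)-m(x)\rvert]\,\d x$. Since $m(x)=\Esp[c_n(x,\Phi_n)]$, the Cauchy--Schwarz inequality gives
\begin{equation*}
\Esp\bigl[\lvert c_n(x,\Phi_n)-m(x)\rvert\bigr] \;\leq\; \sqrt{\Var\bigl[c_n(x,\Phi_n)\bigr]}.
\end{equation*}
The independence of the copies $\Phi^{(1)},\dots,\Phi^{(n)}$ lets me split the variance, and the standard conditional inequality $\Var[\Esp[Y\mid Z]]\leq \Var[Y]$ together with the hypothesis $\Var[c(x,\Phi)]\leq K(x)$ deliver
\begin{equation*}
\Var\bigl[c_n(x,\Phi_n)\bigr] \;=\; \frac{1}{n^2}\sum_{k=1}^n \Var\bigl[\Esp[c(x,\Phi^{(k)})\mid \tfrac{1}{n}\circ\Phi^{(k)}]\bigr] \;\leq\; \frac{\Var[c(x,\Phi)]}{n} \;\leq\; \frac{K(x)}{n}.
\end{equation*}
Integrating $\sqrt{K(x)/n}$ over $\Lambda$ then yields the announced bound $\tfrac{1}{\sqrt{n}}\int_\Lambda\sqrt{K(x)}\,\d x$.

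The only subtlety, and not a genuine obstacle, is that the expression $c_n(x,\Phi_n)$ above is not $\Phi_n$-measurable. This is precisely the ersatz situation flagged in the remark following Theorem \ref{papsuperposition}: the derivation of Theorem \ref{Steinprop} uses the Papangelou intensity only through the reduced Campbell identity, and that identity holds for $\Phi_n$ with $c_n$ playing the role of the intensity, as one verifies by conditioning on the sum of the remaining thinnings and applying the Papangelou identity of $\tfrac{1}{n}\circ\Phi^{(k)}$ copy by copy. Once this is in place, Theorem \ref{Steinprop} applies without modification.
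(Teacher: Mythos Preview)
Your proof is correct and follows essentially the same route as the paper's: apply Theorems \ref{papthinning1} and \ref{papsuperposition} to obtain the ersatz Papangelou intensity of $\Phi_n$, invoke Theorem \ref{Steinprop}, bound the $L^1$-deviation by the square root of the variance (the paper phrases this as Jensen's inequality rather than Cauchy--Schwarz), and then exploit independence together with $\Var[\Esp[Y\mid Z]]\le\Var[Y]$. Your explicit remark that the ersatz intensity suffices for Theorem \ref{Steinprop} via the reduced Campbell identity is a welcome clarification that the paper leaves implicit.
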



In order to fully observe the ability of the independent thinning to weaken the
interactions between some particles of a point process, it is also interesting
to combine it with a rescaling, whose role is to compensate the loss of
intensity caused by the thinning operation. We apply this in the following
theorem for finite stationary determinantal point processes.

\begin{theo}\label{Supertheo3}

  Let $K$ be the kernel of a stationary determinantal point process $\Phi$ on
  $\bbbr^d$ with intensity $\lambda\in\bbbr$, $\Lambda$ be a compact subset of
  $\bbbr^d$, $\beta\in(0,1)$ and $\zeta_{\Lambda,\lambda}$ denotes the
  homogeneous Poisson point process with intensity $\lambda$ reduced to
  $\Lambda$. Let $\Phi_{\Lambda,\beta}$ be the point process on $\bbbr^d$
  obtained by combining a $\beta$-thinning with a $\beta$-rescaling on the point
  process $\Phi$ that one reduces to $\Lambda$. Then,
\begin{equation*}
  \mmmd_{TV}^*(\Phi_{\Lambda,\beta},\zeta_{\Lambda,\lambda})\leq\dfrac{2\beta}{1-\beta} \lambda|\Lambda|.
\end{equation*}

\end{theo}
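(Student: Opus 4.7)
The plan is to apply the Stein--Papangelou bound of Theorem~\ref{Steinprop} to $\Phi_{\Lambda,\beta}$, use Lemma~\ref{repulsivelem2} thanks to the weak repulsiveness of determinantal processes, and then close the estimate with a spectral bound on the thinned-and-rescaled kernel restricted to $\Lambda$.

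By formulas~\eqref{thinningdeterminantal} and~\eqref{rescalingdeterminantal}, the $\beta$-thinning followed by $\beta$-rescaling transforms $\Phi$ into a stationary DPP on $\bbbr^d$ with kernel $\tilde K(x,y)=K(\beta^{-1/d}x,\beta^{-1/d}y)$; its diagonal is still $\lambda$ by stationarity of $K$, so that after reduction to $\Lambda$ we obtain $\Phi_{\Lambda,\beta}$ as a DPP on $\Lambda$ with kernel $\tilde K_\Lambda:=P_\Lambda\tilde K P_\Lambda$ and correlation $\rho\equiv\lambda$ on $\Lambda$. By Theorem~\ref{papreduction} its Papangelou intensity $c$ vanishes off $\Lambda$, and Theorem~\ref{Steinprop} yields
\[
\mmmd_{TV}^*(\Phi_{\Lambda,\beta},\zeta_{\Lambda,\lambda})\leq\int_\Lambda \Esp\bigl[|\lambda-c(x,\Phi_{\Lambda,\beta})|\bigr]\,\d x.
\]

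Theorem~\ref{DPPpap} ensures that $\Phi_{\Lambda,\beta}$ is repulsive, hence weakly repulsive, so Lemma~\ref{repulsivelem2} applied with $\rho\equiv\lambda$ bounds the integrand by $2(c(x,\varnothing)-\lambda)$. Since $c(x,\varnothing)=\tilde J_\Lambda(x,x)$ with $\tilde J_\Lambda=(I-\tilde K_\Lambda)^{-1}\tilde K_\Lambda$ (Theorems~\ref{DPPpap} and~\ref{DPPprop1}), denoting by $(\tilde\mu_n)$ the eigenvalues of $\tilde K_\Lambda$,
\[
\int_\Lambda(c(x,\varnothing)-\lambda)\,\d x=\operatorname{tr}(\tilde J_\Lambda-\tilde K_\Lambda)=\sum_n\frac{\tilde\mu_n^2}{1-\tilde\mu_n}.
\]

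The crux is then the spectral bound $\tilde\mu_n\leq\beta$. Since the spectrum of $K$ lies in $[0,1)$, the thinned operator $\beta K$ has operator norm at most $\beta$; the $\beta$-rescaling is a unitary conjugation on $L^2(\bbbr^d)$ preserving this norm, and compression to $L^2(\Lambda)$ cannot increase it, hence $\tilde\mu_n\leq\beta$. Combined with $\sum_n\tilde\mu_n=\operatorname{tr}(\tilde K_\Lambda)=\lambda|\Lambda|$, this gives
\[
\sum_n\frac{\tilde\mu_n^2}{1-\tilde\mu_n}\leq\frac{1}{1-\beta}\sum_n\tilde\mu_n^2\leq\frac{\beta}{1-\beta}\sum_n\tilde\mu_n=\frac{\beta\lambda|\Lambda|}{1-\beta},
\]
and multiplying by $2$ yields the stated bound. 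The main obstacle is precisely this uniform spectral estimate: without using that $\beta$-thinning scales the operator norm by $\beta$, one would only obtain the weaker, non-vanishing quantity $\frac{\lambda|\Lambda|}{1-\beta}$; the point is that the subsequent rescaling (unitary) and the compression to $\Lambda$ (norm-non-increasing) cannot undo the contraction produced by thinning.
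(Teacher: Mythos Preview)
Your proof is correct and follows the same architecture as the paper's: apply Theorem~\ref{Steinprop}, use weak repulsiveness through Lemma~\ref{repulsivelem2} to reduce to $2\int_\Lambda(c(x,\varnothing)-\lambda)\,\d x$, and then control this spectrally via $\sum_n\tilde\mu_n^2/(1-\tilde\mu_n)$ together with $\sum_n\tilde\mu_n=\lambda|\Lambda|$.

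The difference lies entirely in how the crucial bound $\tilde\mu_n\le\beta$ is obtained. The paper expands $K_{\Lambda,\beta}$ as $\sum_j\lambda_{\Lambda,\beta,j}\,h_{\Lambda,\beta,j}\otimes h_{\Lambda,\beta,j}$ with coefficients $\lambda_{\Lambda,\beta,j}=\lambda_j\beta Z_{\Lambda,\beta,j}^2\le\beta$, where the $h_{\Lambda,\beta,j}$ are normalized restrictions to $\Lambda$ of the rescaled eigenfunctions of $K$; it then invokes Theorem~\ref{DPPprop1} to pass to $J_{\Lambda,\beta}$. Your route is more abstract: $\beta$-thinning multiplies the operator by $\beta$, $\beta$-rescaling is a unitary conjugation on $L^2(\bbbr^d)$ (indeed $T_{\tilde K}=\beta\,U^{-1}T_KU$ for the dilation $U$), and compression $P_\Lambda(\cdot)P_\Lambda$ is norm-nonincreasing, so the genuine eigenvalues of $\tilde K_\Lambda$ satisfy $\tilde\mu_n\le\beta\|K\|\le\beta$. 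This buys you two things: you never need to write down the eigenfunctions of $K$, and you work throughout with the true spectral decomposition of $\tilde K_\Lambda$, so the passage from $\tilde K_\Lambda$ to $\tilde J_\Lambda=(I-\tilde K_\Lambda)^{-1}\tilde K_\Lambda$ via Theorem~\ref{DPPprop1} is immediate. The paper's explicit expansion, by contrast, uses functions $h_{\Lambda,\beta,j}$ that are normalized but not mutually orthogonal on $\Lambda$, so its appeal to Theorem~\ref{DPPprop1} for $J_{\Lambda,\beta}$ is not literally a spectral decomposition; your operator-norm argument sidesteps this delicacy entirely.
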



The application to Gibbs point processes given in the following only focuses on
pairwise Gibbs point processes, that is such that, for any
$r\in\bbbn\setminus\{1,2\}$, $\Psi_r\equiv0$. The functional $\Psi_2$, which is
provided by the expression of the total potential energy, provides the level of
repulsion between the particles of this kind of point process. In the next
theorem, this level of repulsion appears explicitly in the the estimation of its
distance with an adapted Poisson point process.

\begin{theo}\label{Supertheo6}

  Let $\epsilon\in\bbbr_+$ and $\Phi$ be a Gibbs point process on $\XX$ with
  temperature parameter $\theta>0$, partition function $C(\theta)$ and total
  potential energy

$$U(x_1,\dots,x_k)=\displaystyle\sum_{i=1}^k \Psi_1(x_i)+\displaystyle\sum_{i=1}^{k-1}\displaystyle\sum_{j=i+1}^k \Psi_2(x_i,x_j),$$

such that $\displaystyle\int_\XX e^{-\theta\Psi_1(x)}\d x<+\infty$,
$\Psi_2\geq0$ and $\|\Psi_2\|_\infty\leq\epsilon$.

Then,

$$\mmmd_{TV}^*(\Phi,\zeta_M)\leq (M(\XX))^2 \theta\epsilon,$$

where $\zeta_M$ is the Poisson point process on $\XX$ with intensity measure
$M(\d x)=e^{-\theta\Psi_1(x)}\d x$.

\end{theo}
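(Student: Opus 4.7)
The plan is to apply the fundamental Theorem \ref{Steinprop} directly, with the Papangelou intensity of the pairwise Gibbs point process computed via Theorem \ref{Gibbsprop1}. Writing $m(x) = e^{-\theta \Psi_1(x)}$, we need to upper bound
\begin{equation*}
  \int_\XX \Esp[|m(x) - c(x,\Phi)|]\,\d x.
\end{equation*}

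First I would compute $c(x,\phi)$ explicitly: by Theorem \ref{Gibbsprop1}, $c(x,\phi) = e^{-\theta(U(x\phi)-U(\phi))}$, and the pairwise structure gives $U(x\phi) - U(\phi) = \Psi_1(x) + \sum_{y\in\phi}\Psi_2(x,y)$. Thus
\begin{equation*}
  c(x,\phi) = m(x)\exp\Bigl(-\theta \sum_{y\in\phi}\Psi_2(x,y)\Bigr).
\end{equation*}
The hypothesis $\Psi_2 \geq 0$ makes the exponential $\leq 1$, so $m(x) - c(x,\phi) \geq 0$ and the absolute value disappears. Using the elementary inequality $1 - e^{-u} \leq u$ for $u \geq 0$, together with $\|\Psi_2\|_\infty \leq \epsilon$, yields
\begin{equation*}
  |m(x) - c(x,\phi)| \leq m(x)\theta \sum_{y\in\phi}\Psi_2(x,y) \leq m(x)\theta\epsilon\, \phi(\XX).
\end{equation*}

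Next I would take expectations and integrate. This gives $\int_\XX \Esp[|m(x)-c(x,\Phi)|]\,\d x \leq \theta\epsilon\, M(\XX)\,\Esp[\Phi(\XX)]$. The final ingredient is that $\Esp[\Phi(\XX)] \leq M(\XX)$: by \eqref{Papangelouprop2} the correlation function satisfies $\rho(x) = \Esp[c(x,\Phi)]$, and since $\Psi_2 \geq 0$ forces $c(x,\phi) \leq m(x)$ pointwise, we obtain $\rho(x) \leq m(x)$ and therefore $\Esp[\Phi(\XX)] = \int_\XX \rho(x)\,\d x \leq M(\XX)$.

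Combining these estimates with Theorem \ref{Steinprop} yields the desired bound $\mmmd_{TV}^*(\Phi,\zeta_M) \leq (M(\XX))^2\theta\epsilon$. No step looks genuinely difficult here: the pairwise form of $U$ decouples the increment, the sign condition on $\Psi_2$ removes the absolute value, and the linearization $1-e^{-u}\leq u$ produces exactly the factor $\theta\epsilon$. The only point requiring a moment of care is the bound on $\Esp[\Phi(\XX)]$, which is the reason the hypothesis $\Psi_2 \geq 0$ plays a role beyond just letting us drop the absolute value.
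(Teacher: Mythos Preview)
Your proof is correct and follows essentially the same route as the paper: compute the Papangelou intensity via Theorem~\ref{Gibbsprop1}, use $\Psi_2\geq 0$ to drop the absolute value, linearize with $1-e^{-u}\leq u$, and bound $\Esp[\Phi(\XX)]\leq M(\XX)$ via $\rho(x)=\Esp[c(x,\Phi)]\leq m(x)$. The only cosmetic difference is that the paper first bounds $\sum_{y\in\phi}\Psi_2(x,y)\leq |\phi|\epsilon$ inside the exponential and then linearizes, whereas you linearize first and then bound the sum; the resulting estimate is identical.
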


\subsection{Extension of a Kallenberg's theorem}\label{subsec_kallenberg}

The convergence of a sequence of thinned point processes given in Theorem
\ref{Supertheo3} of the previous subsection is established for rescaled point
processes, but this conclusion may actually be extended to a wider framework. In
the following theorem (Theorem 14.19 in \cite{kallenberg_foundations_1997}),
Kallenberg states the convergence in law of thinned point processes to a Cox
process.

\begin{theo}\label{Kallenbergtheo}

  Let $(\Phi_n)_{n\in\bbbn}$ be a sequence of point processes on $\XX$ and let
  $(p_n)_{n\in\bbbn}$ be a sequence of functions from $\XX$ to $[0,1)$ such that
  $(p_n)_{n\in\bbbn}$ tends to $0$ uniformly. Let $M$ be a random measure on
  $\XX$ and $\Gamma_M$ be a Cox point process directed by $M$. Then,
\begin{equation*}
  p_n\Phi_n\xrightarrow[n\to+\infty]{law}M \iff p_n\circ\Phi_n\xrightarrow[n\to+\infty]{law}\Gamma_M.
\end{equation*}

\end{theo}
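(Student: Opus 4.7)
The plan is to prove both implications via Laplace functionals, which characterize convergence in law both for random measures on $\XX$ and for point processes on $\XX$. The key ingredient will be a single identity relating $\Lcal_{p_n \circ \Phi_n}(f)$ to the Laplace functional of the scaled random measure $p_n \Phi_n$, with an error term that vanishes uniformly thanks to the hypothesis $\|p_n\|_\infty \to 0$.

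First I would compute, for any $f \in \Bcal_+(\XX)$ and setting $h(x) := 1 - e^{-f(x)}$, by conditioning on $\Phi_n$ and using the independence of the thinning coins,
\begin{equation*}
\Lcal_{p_n \circ \Phi_n}(f) = \Esp\Big[\prod_{x\in\Phi_n}\bigl(1 - p_n(x) h(x)\bigr)\Big] = \Esp\Big[\exp\Big(\sum_{x \in \Phi_n} \log(1-p_n(x)h(x))\Big)\Big].
\end{equation*}
Since $\|p_n\|_\infty \to 0$ and $0 \le h \le 1$, for $n$ large the elementary bound $|\log(1-u) + u| \le 2u^2$ (valid for $|u| \le 1/2$) yields the fundamental identity
\begin{equation*}
\Lcal_{p_n \circ \Phi_n}(f) = \Esp\Big[\exp\Big(-\int_\XX h(x)\, (p_n \Phi_n)(\d x) + R_n\Big)\Big],
\end{equation*}
with $|R_n| \le 2\|p_n\|_\infty \int_\XX h\, \d(p_n \Phi_n)$.

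For the direct implication, assuming $p_n \Phi_n \to M$ in law on $\bbbm_R$, the continuous mapping theorem gives $\int h\, \d(p_n \Phi_n) \to \int h\, \d M$ in law (as $h$ is non-negative, bounded, with compact support), and this sequence is tight; consequently $R_n \to 0$ in probability. Since the integrand lies in $[0,1]$, bounded convergence yields
\begin{equation*}
\Lcal_{p_n \circ \Phi_n}(f) \to \Esp\Big[\exp\Big(-\int_\XX h\, \d M\Big)\Big] = \Lcal_{\Gamma_M}(f),
\end{equation*}
which is precisely the Laplace functional of a Cox process directed by $M$.

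For the converse, the same identity is run backwards: if $\Lcal_{p_n \circ \Phi_n}(f) \to \Lcal_{\Gamma_M}(f)$ for all $f \in \Bcal_+(\XX)$, then, $R_n$ being asymptotically negligible, one obtains convergence of $\Esp[\exp(-\int h\, \d(p_n \Phi_n))]$ for every $h$ of the form $1 - e^{-f}$. Replacing $f$ by $tg$ with $g \in \Ccal_K(\XX,\bbbr_+)$ and $t > 0$ small enough that $1 - e^{-tg}$ lies in the admissible class, one recovers the Laplace transform of $\int g\, \d(p_n \Phi_n)$ on a neighborhood of the origin, which identifies its limiting law as that of $\int g\, \d M$, and hence $p_n \Phi_n \to M$ in law. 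The main obstacle is that convergence of Laplace functionals yields convergence in law only once tightness of $\{p_n \Phi_n\}$ as random measures on $\bbbm_R$ has been secured; I expect this to follow from the identity $\Esp[p_n \circ \Phi_n(K)] = \Esp[p_n \Phi_n(K)]$ on any compact $K$, combined with the fact that the convergence $p_n \circ \Phi_n \to \Gamma_M$ furnishes a uniform bound on these first moments, so that Prohorov's theorem applies on $\bbbm_R$ endowed with the vague topology.
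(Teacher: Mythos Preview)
The paper does not give its own proof of this theorem: it is quoted verbatim as Theorem~14.19 from Kallenberg's \emph{Foundations of Modern Probability} and serves only as motivation for the quantitative bound in Theorem~\ref{Supertheo4}. There is therefore nothing in the paper to compare your argument against.

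For what it is worth, your Laplace-functional approach is the classical one and is essentially how Kallenberg proves the result. The direct implication is clean. In the converse, however, your tightness step has a gap: convergence in law of $p_n\circ\Phi_n$ to $\Gamma_M$ does \emph{not} furnish a uniform bound on the first moments $\Esp[p_n\circ\Phi_n(K)]=\Esp[p_n\Phi_n(K)]$, since weak convergence does not control expectations. What you actually get for free is tightness of the real random variables $\{p_n\circ\Phi_n(K)\}_n$ for each compact $K$. A correct way to transfer this to tightness of $\{p_n\Phi_n(K)\}_n$ is a conditional second-moment argument: given $\Phi_n$, the thinned count $p_n\circ\Phi_n(K)$ has conditional mean $p_n\Phi_n(K)$ and conditional variance at most $p_n\Phi_n(K)$, so on the event $\{p_n\Phi_n(K)>M\}$ Chebyshev gives $\P\bigl(p_n\circ\Phi_n(K)>M/2\,\big|\,\Phi_n\bigr)\ge 1-4/M$, and hence large values of $p_n\Phi_n(K)$ force large values of $p_n\circ\Phi_n(K)$ with uniformly positive probability. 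This yields the vague tightness you need, after which the rest of your argument goes through.
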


Since the Polish distance provides a metric for the convergence in law, it
becomes conceivable to get a convergence speed for this last result. First of
all, we write in the following lemma the Polish distance between two Cox point
processes as an adapted Polish distance between its directing random measures.

\begin{lem}\label{lem1Supertheo4}

  Let $M_1,M_2$ be random measures on $\XX$ and $\Gamma_{M_1},\Gamma_{M_2}$ be
  Cox point processes directed by $M_1,M_2$ respectively. Then,
\begin{equation*}
  \mmmd_P({\Gamma_{M_1}},{\Gamma_{M_2}})=\overline{\mmmd}_P({M_1},{M_2}),
\end{equation*}

with $\overline{\mmmd}_P$ denoting the Polish distance on $\bbbm_1'$ associated
to $g=(g_k)_{k\in\bbbn}$ defined, for any $k\in\bbbn$ and any $\varphi\in\bbbm$,
by $g_k(\varphi)=\Esp[f_k(\zeta_\varphi)]$, where $\zeta_\varphi$ is a Poisson
point process on $\XX$ with intensity measure $\varphi$.

\end{lem}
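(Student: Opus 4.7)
The plan is to expand the left-hand side using the definition of the Polish distance on $\bbbm_1'$ and to reduce each test-function expectation to an expectation against the law of the directing measure by simple conditioning. By the very definition of $\mmmd_P$ applied to the laws of $\Gamma_{M_1}$ and $\Gamma_{M_2}$,
\begin{equation*}
\mmmd_P(\Gamma_{M_1}, \Gamma_{M_2}) = \sum_{k=1}^{+\infty} \frac{1}{2^k} \frac{|\Esp[f_k(\Gamma_{M_1})] - \Esp[f_k(\Gamma_{M_2})]|}{1 + |\Esp[f_k(\Gamma_{M_1})] - \Esp[f_k(\Gamma_{M_2})]|}.
\end{equation*}
For each $k$, I would condition on $M_i$ and invoke the defining property of a Cox process: given $M_i$, the process $\Gamma_{M_i}$ is a Poisson process with intensity measure $M_i$. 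The tower property then yields
\begin{equation*}
\Esp[f_k(\Gamma_{M_i})] = \Esp\bigl[\Esp[f_k(\Gamma_{M_i}) \mid M_i]\bigr] = \Esp[g_k(M_i)] = \langle g_k, \P_{M_i} \rangle,
\end{equation*}
where $g_k(\varphi) = \Esp[f_k(\zeta_\varphi)]$ as in the statement. Substituting term by term into the series rewrites each increment as $|\langle g_k, \P_{M_1} \rangle - \langle g_k, \P_{M_2} \rangle|$, which is exactly the expression defining $\overline{\mmmd}_P(M_1, M_2)$.

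There is essentially no substantive obstacle here; the identity is purely algebraic, obtained from the tower property and the definition of a Cox process. The one point worth checking with some care is that $\varphi \mapsto g_k(\varphi)$ is measurable on $\bbbm$, so that $\Esp[g_k(M_i)]$ and the inner pairings $\langle g_k, \P_{M_i} \rangle$ are well-defined. This is handled cleanly by verifying measurability first on the multiplicative family $f(\phi) = \exp(-\langle h, \phi \rangle)$ with $h \in \Bcal_+(\XX)$, for which $g(\varphi)$ is the explicit Laplace functional $\exp\{-\int_\XX (1 - e^{-h(x)}) \varphi(\d x)\}$ and hence manifestly measurable in $\varphi$, and then extending to the admissible class $\Ccal_K(\bbbm) \cap \Lip_1(\mmmd_{TV})$ of test functions by a monotone class argument combined with dominated convergence.
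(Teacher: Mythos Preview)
Your proof is correct and follows essentially the same approach as the paper, which simply asserts that the identity ``is directly deduced from the definition of the Polish distance $\mmmd_P$.'' You have supplied the conditioning argument and the measurability justification that the paper leaves implicit, but the underlying idea is identical.
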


In order to apply Theorem \ref{Steinprop}, we need now to determine a version of
the Papangelou intensity of a thinned configuration, which is given in the next
lemma.

\begin{lem}\label{lem2Supertheo4}

  Let  $\varphi\in N_\XX$ and a measurable function $p:\XX\to[0,1)$. Then, a
  version $c$ of the Papangelou intensity of $p\circ\varphi$ with respect to the
  measure $p(x)\varphi(\d x)$ is provided for any $x\in\XX$ and any $\eta\in
  N_\XX$ by
\begin{equation*}
  c(x,\eta)=\textbf{1}_{\{x\in\varphi\setminus\eta\}}\dfrac{1}{1-p(x)}.
\end{equation*}

\end{lem}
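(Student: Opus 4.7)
The plan is to verify directly the defining identity of the Papangelou intensity, namely the Georgii--Nguyen--Zessin (GNZ) formula, rewritten with respect to the reference measure $p(x)\varphi(\d x)$ rather than the usual $\ell(\d x)$: I must show that for every measurable $u:\XX\times N_{\XX}\to\bbbr_+$,
\begin{equation*}
\Esp\Bigl[\sum_{x\in p\circ\varphi}u(x,(p\circ\varphi)\setminus x)\Bigr]
\;=\;\int_{\XX}\Esp\bigl[c(x,p\circ\varphi)\,u(x,p\circ\varphi)\bigr]\,p(x)\,\varphi(\d x),
\end{equation*}
with the proposed $c(x,\eta)=\mathbf{1}_{\{x\in\varphi\setminus\eta\}}/(1-p(x))$. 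Note that $c(x,\eta)=0$ whenever $x\notin\varphi$, so the RHS is effectively a sum over the atoms of $\varphi$, as is the LHS; this is why both sides will match atom-by-atom.

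The first step is to enumerate $\varphi=\sum_{i}\delta_{y_i}$ and construct the thinning via independent Bernoulli variables $B_i$ with $\P(B_i=1)=p(y_i)$, so that $\Phi:=p\circ\varphi=\sum_{i}B_i\,\delta_{y_i}$. Both integrals then reduce to sums indexed by $i$: the LHS becomes $\sum_i \Esp[B_i\,u(y_i,\Phi\setminus y_i)]$ and the RHS becomes $\sum_i p(y_i)\,\Esp[c(y_i,\Phi)\,u(y_i,\Phi)]$. So it suffices to match each $i$-th term.

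For the LHS term, I would introduce $\widetilde{\Phi}_i:=\sum_{j\neq i}B_j\,\delta_{y_j}$, which is independent of $B_i$. Conditioning on $B_i$, the event $B_i=1$ gives $\Phi\setminus y_i=\widetilde{\Phi}_i$, so
\begin{equation*}
\Esp[B_i\,u(y_i,\Phi\setminus y_i)]\;=\;p(y_i)\,\Esp[u(y_i,\widetilde{\Phi}_i)].
\end{equation*}
For the RHS term, I would evaluate $c(y_i,\Phi)$: since $y_i\in\varphi$, the indicator $\mathbf{1}_{\{y_i\in\varphi\setminus\Phi\}}$ equals $\mathbf{1}_{\{y_i\notin\Phi\}}=\mathbf{1}_{\{B_i=0\}}$, so $c(y_i,\Phi)=\mathbf{1}_{\{B_i=0\}}/(1-p(y_i))$. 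Conditioning again on $B_i$, and using that on $\{B_i=0\}$ one has $\Phi=\widetilde{\Phi}_i$,
\begin{equation*}
p(y_i)\,\Esp[c(y_i,\Phi)\,u(y_i,\Phi)]
\;=\;\frac{p(y_i)}{1-p(y_i)}\,(1-p(y_i))\,\Esp[u(y_i,\widetilde{\Phi}_i)]
\;=\;p(y_i)\,\Esp[u(y_i,\widetilde{\Phi}_i)],
\end{equation*}
which is exactly the $i$-th term of the LHS.

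There is not really a hard step here; the only point deserving care is the choice of the reference measure $p(x)\varphi(\d x)$. The factor $1/(1-p(x))$ in $c$ is precisely what compensates the probability $1-p(x)$ that a point of $\varphi$ is thrown away by the thinning, while the factor $p(x)$ in the reference measure restores the correct birth rate $p(y_i)$ for the retained points. Summing over $i$ (with the usual Fubini/monotone convergence justification for nonnegative $u$) then yields the desired identity, establishing that the given $c$ is indeed a version of the Papangelou intensity of $p\circ\varphi$ with respect to $p(x)\varphi(\d x)$.
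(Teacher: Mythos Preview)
Your proof is correct and follows essentially the same approach as the paper's: both verify the GNZ identity directly by exploiting the independent per-atom structure of the thinning. The paper expands over all possible retained subsets $\eta\subset\varphi$ using the explicit product formula $\P(\eta=p\circ\varphi)=\prod_{t\in\eta}p(t)\prod_{s\in\varphi\setminus\eta}(1-p(s))$ and then reindexes; you instead introduce the Bernoulli variables $B_i$ and condition on $B_i$ using the independence of $\widetilde\Phi_i$ from $B_i$, which is a slightly more streamlined way to carry out the same computation.
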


The previous elements lead to a key result: a thinned point process may be seen
as an approximation of a Cox process, which is formally stated in the following
lemma.

\begin{lem}\label{lem3Supertheo4}

  Let $\Phi$ be a point process on $\XX$ and $p$ be a function from $\XX$ to
  [0,1). Let $\Gamma_{p\Phi}$ be a Cox point process directed by $p\Phi$. Then,
\begin{equation*}
  \mmmd_{TV}^*(\P_{p\circ\Phi},\P_{\Gamma_{p\Phi}})\leq 2\Esp\Big[\displaystyle\sum_{x\in\Phi}p^2(x)\Big].
\end{equation*}

\end{lem}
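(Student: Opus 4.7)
The strategy is to condition on $\Phi$, apply Theorem~\ref{Steinprop} realization by realization, then average. Given $\Phi=\varphi$, the independent thinning construction gives $p\circ\Phi$ the conditional law $\P_{p\circ\varphi}$, while the definition of a Cox process gives $\Gamma_{p\Phi}$ the conditional law $\P_{\zeta_{p\varphi}}$, where $\zeta_{p\varphi}$ denotes a Poisson point process with intensity measure $p(x)\varphi(\d x)$. Gluing, for each $\varphi$, an optimal coupling between these two conditional laws into a global coupling yields the convexity inequality
\begin{equation*}
  \mmmd_{TV}^*(\P_{p\circ\Phi},\P_{\Gamma_{p\Phi}}) \leq \Esp\bigl[B(\Phi)\bigr],\qquad B(\varphi):=\mmmd_{TV}^*(\P_{p\circ\varphi},\P_{\zeta_{p\varphi}}).
\end{equation*}

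For the pointwise bound $B(\varphi)$, I would apply Theorem~\ref{Steinprop} with reference measure $\ell=\varphi$ (itself a Radon counting measure), so that the target Poisson density is $m(x)=p(x)$. By Lemma~\ref{lem2Supertheo4}, the Papangelou intensity of $p\circ\varphi$ with respect to $p(x)\varphi(\d x)$ equals $\tilde c(x,\eta)=\mathbf{1}_{\{x\in\varphi\setminus\eta\}}/(1-p(x))$; multiplying by the density $p(x)$ gives the Papangelou intensity with respect to $\varphi$ as
\begin{equation*}
  c(x,\eta)=\frac{p(x)}{1-p(x)}\,\mathbf{1}_{\{x\in\varphi\setminus\eta\}}.
\end{equation*}
Theorem~\ref{Steinprop} then yields $B(\varphi)\leq\sum_{x\in\varphi}\Esp\bigl[\,|p(x)-c(x,p\circ\varphi)|\,\bigr]$.

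Each summand is computed by splitting on whether $x$ survives the thinning. With probability $p(x)$ the point survives, so $c(x,p\circ\varphi)=0$ and the integrand equals $p(x)$; with probability $1-p(x)$ the point is removed, giving $c(x,p\circ\varphi)=p(x)/(1-p(x))$ and integrand $p^2(x)/(1-p(x))$. The weighted sum is
\begin{equation*}
  p(x)\cdot p(x)+(1-p(x))\cdot\frac{p^2(x)}{1-p(x)}=2p^2(x),
\end{equation*}
so $B(\varphi)\leq 2\sum_{x\in\varphi}p^2(x)$. Taking expectation and combining with the convexity inequality closes the proof.

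The only delicate step is the bookkeeping of reference measures when combining Lemma~\ref{lem2Supertheo4} (which gives the Papangelou intensity with respect to $p\varphi$) with Theorem~\ref{Steinprop} (where I use it with respect to $\varphi$ in order to match the density $m=p$): overlooking the factor $p(x)$ would produce an incorrect bound. The measurability of the family of optimal conditional couplings underlying the convexity inequality is a standard measurable selection argument that I would take for granted.
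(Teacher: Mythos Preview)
Your proof is correct and follows essentially the same route as the paper: condition on $\Phi=\varphi$, invoke Theorem~\ref{Steinprop} with the Papangelou intensity supplied by Lemma~\ref{lem2Supertheo4}, compute the expectation by splitting on whether $x$ survives the thinning, and then average over $\varphi$. The only cosmetic difference is that the paper applies Theorem~\ref{Steinprop} with reference measure $p(x)\varphi(\d x)$ and target density $m\equiv 1$, whereas you take reference measure $\varphi$ and target density $m=p$; the resulting integrals are identical, and you yourself flag this bookkeeping point.
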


We can then deduce in the next theorem an estimation of the investigated
distance.

\begin{theo}\label{Supertheo4}

  Let $\Phi$ be a point process on $\XX$ and let $p$ be a measurable function
  from $\XX$ to [0,1). Let $M$ be a random measure on $\XX$ and $\Gamma_M$ be a
  Cox point process directed by $M$. Then,
\begin{equation*}
  \mmmd_P({p\circ\Phi},{\Gamma_M})\leq 2\Esp\Big[\displaystyle\sum_{x\in\Phi}p^2(x)\Big]+\overline{\mmmd}_P({p\Phi},M),
\end{equation*}

with $\overline{\mmmd}_P$ denoting the Polish distance on $\bbbm_1'$ associated
to $g=(g_k)_{k\in\bbbn}$ defined, for any $n\in\bbbn$ and any $\varphi\in\bbbm$,
by $g_k(\varphi)=\Esp[f_k(\zeta_\varphi)]$, where $\zeta_\varphi$ is a Poisson
point process on $\XX$ with intensity measure $\varphi$.

\end{theo}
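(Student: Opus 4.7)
The plan is to introduce the intermediate Cox point process $\Gamma_{p\Phi}$ directed by the random measure $p\Phi$ and apply the triangle inequality for the Polish distance $\mmmd_P$:
\begin{equation*}
  \mmmd_P(p\circ\Phi,\Gamma_M)\leq \mmmd_P(p\circ\Phi,\Gamma_{p\Phi})+\mmmd_P(\Gamma_{p\Phi},\Gamma_M).
\end{equation*}
The two pieces are then controlled by the two lemmas immediately preceding the theorem, which is why they were established.

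For the first piece, I invoke Lemma~\ref{lem3Supertheo4}, which gives
\begin{equation*}
  \mmmd_{TV}^*(\P_{p\circ\Phi},\P_{\Gamma_{p\Phi}})\leq 2\Esp\Big[\sum_{x\in\Phi}p^2(x)\Big].
\end{equation*}
Since the Polish distance on $\bbbm_1'$ is dominated by the Kantorovich–Rubinstein distance associated to the total variation cost (the last item of the comparison list in Subsection~\ref{subsec_convergence}, namely ${\mmmd_P}_{|\bbbm_1'}\leq{\mmmd_{TV}}_{|N_\XX}^*$), this same upper bound applies to $\mmmd_P(p\circ\Phi,\Gamma_{p\Phi})$.

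For the second piece, I use Lemma~\ref{lem1Supertheo4} applied with $M_1=p\Phi$ and $M_2=M$, giving exactly
\begin{equation*}
  \mmmd_P(\Gamma_{p\Phi},\Gamma_M)=\overline{\mmmd}_P(p\Phi,M),
\end{equation*}
where $\overline{\mmmd}_P$ is the Polish distance on $\bbbm_1'$ associated to the family $g=(g_k)_{k\in\bbbn}$ defined through the Laplace functional of a Poisson process, as specified in the statement. Summing these two controls yields the announced inequality.

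I do not expect a serious obstacle: the triangle inequality is immediate once $\mmmd_P$ is seen as a genuine metric on $\bbbm_1'$, and both lemmas are already available. The only delicate point is bookkeeping, namely checking that the same family $f=(f_k)_{k\in\bbbn}$ is used to define $\mmmd_P$ on $\bbbm_1'$ (distributions of point processes) and to define $\overline{\mmmd}_P$ on $\bbbm_1'$ (distributions of random measures) via $g_k(\varphi)=\Esp[f_k(\zeta_\varphi)]$, which is exactly the convention recalled in Lemma~\ref{lem1Supertheo4} and repeated in the theorem statement. Once this coherence is noted, the two-line argument above concludes.
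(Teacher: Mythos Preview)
Your proposal is correct and follows exactly the paper's own proof: triangle inequality through the intermediate Cox process $\Gamma_{p\Phi}$, then Lemma~\ref{lem3Supertheo4} combined with the domination ${\mmmd_P}_{|\bbbm_1'}\leq{\mmmd_{TV}}_{|N_\XX}^*$ for the first term, and Lemma~\ref{lem1Supertheo4} for the second. The only cosmetic difference is that the paper justifies the domination by recalling $(f_k)_{k\in\bbbn}\subset\Lip_1(\mmmd_{TV})$ directly rather than citing the comparison list.
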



Under the assumptions of Theorem \ref{Supertheo4}, it is actually possible to
show that
\begin{equation*}
  \mmmd_{TV}^*({p\circ\Phi},{\Gamma_M})\leq 2\Esp\Big[\displaystyle\sum_{x\in\Phi}p^2(x)\Big]+\mmmd_{TV}^*({p\Phi},M).
\end{equation*}

However, the random measure $p\Phi$ has almost surely a discrete support, and
this implies that we cannot suppose that the quantity $\mmmd_{TV}^*({p\Phi},M)$
is close to $0$ in the general case, in particular when $M$ admits almost surely
a density with respect to the measure $\ell$. That is the reason why we choose
to use the Polish distance instead of a stronger distance for this last
convergence result.


\section{Proofs}\label{sec_proofs}


\subsection{Proofs of Section \ref{sec_steins_method}} \label{subsec_proof3}
\begin{proof}[Proof of Theorem \ref{bienunsemigroup}]
  For any measurable and bounded function $F:\widehat{N}_\XX\to\bbbr$ and any
  $\phi\in \widehat{N}_\XX$, since thinning is associative,
  \begin{equation*}
    P_s(P_tF)(\phi)= \int_{\widehat{N}_\XX} \int_{\widehat{N}_\XX} F(e^{-(t+s)}\circ \phi + e^{-s}\circ (1-e^{-t})\circ \psi + (1-e^{-
      s})\circ \eta ) \P_\zeta(\d\psi)\P_\zeta(\d \eta).
  \end{equation*}
  Furthermore, since
  \begin{equation*}
    e^{-s}(1-e^{-t}) + (1-e^{-s})=1-e^{-(t+s)},
  \end{equation*}

by the invariance property of the Poisson point process distribution, we deduce
that
\begin{equation*}
  e^{-s}\circ (1-e^{-t})\circ\zeta^{(1)} \\ + (1-e^{-s})\circ\zeta^{(2)} \stackrel{\Dcal}{=}(1-e^{-(t+s)})\circ\zeta,
\end{equation*}

where $\zeta^{(1)}$ and $\zeta^{(2)}$ are independent copies of $\zeta$.
Hence,
\begin{equation*}
  P_s(P_tF)(\phi)=\int_{\widehat{N}_\XX} F(e^{-(t+s)}\circ \phi+ (1-e^{-(t+s)})\circ \eta)\P_\zeta(\d \eta)
\end{equation*}

and the proof is thus complete.
\end{proof}
\begin{proof}[Proof of Theorem \ref{gradientprop0}]
  By the Mecke formula applied to $\zeta$, for any measurable function
  $u:\XX\times N_\XX\to\bbbr_+$,
\begin{eqnarray*}
  \int_{\XX}\Esp[D_x F(\zeta)u(x,\zeta)]M(\d x)&=&\Esp\Big[F(\zeta)\displaystyle\sum_{x\in\zeta}u(x,\zeta\setminus x)\Big]-\int_{\XX}\Esp[F(\zeta)u(x,\zeta)]M(\d x) \\
                                               &=&\Esp\Big[F(\zeta)\Big(\displaystyle\sum_{x\in\zeta}u(x,\zeta\setminus x)-\int_{\XX}u(x,\zeta)M(\d x)\Big)\Big].
\end{eqnarray*}
Hence, if $F(\phi)=0$ $\P_{\Phi}(\d\zeta)$-a.s., then $D_x F(\phi)=0$
$(M\otimes\P_{\zeta})(\d x,\d\phi)$-a.s., as expected.
\end{proof}
\begin{proof}[Proof of Theorem \ref{generator}]
  For any measurable and bounded function $F:\widehat{N}_\XX\to\bbbr$ and any
  $\phi\in \widehat{N}_\XX$,
\begin{eqnarray*}
  \left.\frac{\d P_tF(\phi)}{\d t}\right|_{t=0}&=&\displaystyle\lim_{t\to0}\dfrac{1}{t}(P_tF(\phi)-P_0F(\phi)) \\
                                               &=&\displaystyle\lim_{t\to0}\dfrac{1}{t}(\Esp[F(e^{-t}\circ\phi+(1-e^{-t})\circ\zeta)]-F(\phi)),
\end{eqnarray*}

and, for any $t>0$,
\begin{eqnarray*}
  \Esp[F(e^{-t}\circ\phi+(1-e^{-t})\circ\zeta)]&=&p_{00}(t)F(\phi)+\displaystyle\sum_{x\in\phi}p_{01}^{(x)}(t)F(\phi\setminus x)\\
                                               &&+p_{10}(t)\int_{\XX}F(\phi+x)\dfrac{M(\d x)}{M(\XX)}+R(t),
\end{eqnarray*}

where for any $x\in\phi$,
\begin{eqnarray*}
  p_{00}(t)&=&\P(e^{-t}\circ\phi=\phi,(1-e^{-t})\circ\zeta=\varnothing)\\
           &=&\P(e^{-t}\circ\phi=\phi)\P((1-e^{-t})\circ\zeta=\varnothing)\\
           &=&e^{-t|\phi|}e^{-(1-e^{-t})M(\XX)},
\end{eqnarray*}

where the computation of the second factor is deduced from the fact that
$(1-e^{-t})\circ\zeta$ is a Poisson point process with intensity measure
$(1-e^{-t})M$,
\begin{eqnarray*}
  p_{01}^{(x)}(t)&=&\P(\phi\setminus(e^{-t}\circ\phi)=x,(1-e^{-t})\circ\zeta=\varnothing)\\
                 &=&\P(\phi\setminus(e^{-t}\circ\phi)=x)\P((1-e^{- t})\circ\zeta=\varnothing) \\
                 &=&(1-e^{-t})e^{-t(|\phi|-1)}e^{-(1-e^{- t})M(\XX)},
\end{eqnarray*}
\begin{eqnarray*}
  p_{10}(t)&=&\P(e^{-t}\circ\phi=\phi,|(1-e^{- t})\circ\zeta|=1)\\
           &=&\P(e^{-t}\circ\phi=\phi)\P(|(1-e^{- t})\circ\zeta|=1) \\
           &=&e^{-t|\phi|}(1-e^{- t})M(\XX)e^{-(1-e^{-t})M(\XX)},
\end{eqnarray*}
\begin{equation*}
  R(t)=\Esp[F(e^{-t}\circ\phi+(1-e^{- t})\circ\zeta)\textbf{1}_{|\phi\setminus(e^{-t}\circ\phi)|+|(1-e^{- t})\circ\zeta|\geq2}].
\end{equation*}
Then,
\begin{multline*}
  \dfrac{1}{t}\big(\Esp[F(e^{-t}\circ\phi+(1-e^{- t})\circ\zeta)]-F(\phi)\big)=\\
  \shoveleft{=\dfrac{1}{t}\Big(\displaystyle\sum_{x\in\phi}p_{01}^{(x)}(t)(F(\phi\setminus x)-F(\phi))}\\
  \shoveright{+p_{10}(t)\int_{\XX}F(\phi+x)-F(\phi)\dfrac{M(\d x)}{M(\XX)}-p_\infty(t)F(\phi)+R(t)\Big)}\\
  \shoveleft{=\displaystyle\sum_{x\in\phi}\dfrac{p_{01}^{(x)}(t)}{t}(F(\phi\setminus x)-F(\phi))}\\
  \shoveright{+\dfrac{p_{10}(t)}{t}\int_{\XX}F(\phi+x)-F(\phi)\dfrac{M(\d x)}{M(\XX)}-\dfrac{p_\infty(t)}{t}F(\phi)+\dfrac{R(t)}{t},}\\
\end{multline*}

where
\begin{eqnarray*}
  p_\infty(t)&=&\P(|\phi\setminus(e^{-t}\circ\phi)|+|(1-e^{-t})\circ\zeta|\geq2)\\
             &=&1-\Big(p_{00}(t)+\displaystyle\sum_{x\in\phi}p_{01}^{(x)}(t)+p_{10}(t)\Big).
\end{eqnarray*}
Since for any
$x\in\phi$, $$\displaystyle\lim_{t\to0}\frac{p_{01}^{(x)}(t)}{t}=1,\
\displaystyle\lim_{t\to0}\frac{p_{10}(t)}{t}=M(\XX)\ \text{and}\
\displaystyle\lim_{t\to0}\frac{1-p_{00}(t)}{t}=|\phi|+M(\XX),$$

we get that $$\displaystyle\lim_{t\to0}\frac{p_\infty(t)}{t}=0,$$

then by boundedness of $F$ that $$\displaystyle\lim_{t\to0}\frac{R(t)}{t}=0,$$

hence the first result. The second result is a consequence of the Mecke formula
for a Poisson point process.
\end{proof}
\begin{proof}[Proof of Lemma \ref{semigroupgradientcom}]
  For any $t\in\bbbr_+$, any $x\in\XX$, any measurable and bounded function
  $F:\widehat{N}_\XX\to\bbbr$ and any $\phi\in \widehat{N}_\XX$, from the
  definitions of $D_x$ and $P_t$,
\begin{eqnarray*}
  D_x P_tF(\phi)&=&P_tF(\phi+x)-P_tF(\phi) \\
                &=&\Esp[F(e^{-t}\circ(\phi+x)+(1-e^{- t})\circ\zeta)-F(e^{-t}\circ\phi+(1-e^{-t})\circ\zeta)].
\end{eqnarray*}
Hence, since thinning is distributive with respect to sum,
\begin{equation*}
  D_x P_tF(\phi)=\Esp[F(e^{-t}\circ\phi+e^{-t}\circ x+(1-e^{-t})\circ\zeta)-F(e^{-t}\circ\phi+(1-e^{-t})\circ\zeta)],
\end{equation*}

and then, since $\P(e^{-t}\circ x=x)=1-\P(e^{-t}\circ x=\varnothing)=e^{-t}$, it
follows that
\begin{equation*}
  D_x P_tF(\phi)=e^{-t}P_tD_{x}F(\phi).
\end{equation*}
The proof is thus complete.
\end{proof}
\begin{proof}[Proof of Lemma \ref{ergodic}]
  For any $F\in\Lip_1(\widehat{N}_\XX,\mmmd_{TV})$, $t\in\bbbr_+$ and $\phi\in
  \widehat{N}_\XX$,
\begin{multline*}
  \big|P_tF(\phi)-\Esp[F(\zeta)]\big|\leq \big|P_tF(\phi)-P_tF(\varnothing)\big|+\big|P_tF(\varnothing)-\Esp[F(\zeta)]\big| \\
  \shoveleft{=\big|\Esp[F(e^{-t}\circ\phi+(1-e^{-t})\circ\zeta)]-\Esp[F((1-e^{-t})\circ\zeta)]\big|}\\
  \shoveright{+\big|\Esp[F((1-e^{-t})\circ\zeta)]-\Esp[F(\zeta)]\big|.}\\
\end{multline*}
On one hand, since $F\in\Lip_1(\widehat{N}_\XX,\mmmd_{TV})$,
\begin{eqnarray*}
  \big|\Esp[F(e^{-t}\circ\phi+(1-e^{-t})\circ\zeta)]-\Esp[F((1-e^{-t})\circ\zeta)]\big|&\leq&\Esp[\mmmd_{TV}(e^{-t}\circ\phi,\varnothing)] \\
                                                                                       &=&\Esp[|e^{-t}\circ\phi|],
\end{eqnarray*}

and, since $|e^{-t}\circ\phi|$ has a binomial distribution with parameters
$|\phi|$ and $e^{-t}$,
\begin{equation*}
  \big|\Esp[F(e^{-t}\circ\phi+(1-e^{-t})\circ\zeta)]-\Esp[F((1-e^{-t})\circ\zeta)]\big|\leq e^{-t}|\phi|.
\end{equation*}
On the other hand,
\begin{equation*}
  \Esp[F(\zeta)]=\Esp[F((1-e^{-t})\circ\zeta+e^{-t}\circ\zeta)],
\end{equation*}

then, since $F\in\Lip_1(\widehat{N}_\XX,\mmmd_{TV})$,
\begin{eqnarray*}
  \big|\Esp[F((1-e^{-t})\circ\zeta)]-\Esp[F(\zeta)]\big|&\leq&\Esp[\mmmd_{TV}(e^{-t}\circ\zeta,\varnothing)] \\
                                                        &=&\Esp[|e^{-t}\circ\zeta|],
\end{eqnarray*}

and, since $|e^{-t}\circ\zeta|$ has a Poisson distribution with parameter
$e^{-t}M(\XX)$,
\begin{equation*}
  \big|\Esp[F(e^{-t}\circ\phi+(1-e^{-t})\circ\zeta)]-\Esp[F((1-e^{-t})\circ\zeta)]\big|\leq e^{-t}M(\XX),
\end{equation*}

which concludes this proof.
\end{proof}
\begin{proof}[Proof of Theorem \ref{SDR}]
  For any $F\in\Lip_1(\widehat{N}_\XX,\mmmd_{TV})$ and any $\phi\in
  \widehat{N}_\XX$, from the definition of $L$,
\begin{equation*}
  \int_0^{+\infty} LP_sF(\phi)\d s=\int_0^{+\infty} \left.\Big(\dfrac{\d P_t(P_sF)}{\d t}\Big)\right|_{t=0}(\phi)\d s.
\end{equation*}
Hence, since $(P_t)_{t\geq0}$ is a semi-group,
\begin{equation*}
  \int_0^{+\infty} LP_sF(\phi)\d s=\int_0^{+\infty} \left.\Big(\dfrac{\d P_{t+s}F}{\d t}\Big)\right|_{t=0}(\phi)\d s
\end{equation*}

and it yields
\begin{eqnarray*}
  \int_0^{+\infty} LP_sF(\phi)\d s&=&\int_0^{+\infty} \dfrac{\d P_{s}F}{\d s}(\phi)\d s \\
                                  &=&\displaystyle\lim_{s\to+\infty} P_sF(\phi)-P_0F(\phi).
\end{eqnarray*}
Then, by Lemma \ref{ergodic},
\begin{equation*}
  \int_0^{+\infty} LP_sF(\phi)\d s=\Esp[F(\zeta)]-F(\phi).
\end{equation*}
The proof is thus complete.
\end{proof}
\begin{proof}[Proof of Theorem \ref{Steinprop}]
  For any $F\in\Lip_1(\widehat{N}_\XX,\mmmd_{TV})$, by Theorem \ref{SDR},
\begin{equation*}
  \Esp[F(\zeta)]-\Esp[F(\Phi)]=\Esp\Big[\int_0^{+\infty}LP_sF(\Phi)\d s\Big].
\end{equation*}
Then, from the expression of the generator $L$,
\begin{equation*}
  \Esp[F(\zeta)]-\Esp[F(\Phi)]=\int_0^{+\infty}\Esp\Big[\int_{\XX}D_x P_sF(\Phi)M(\d x)\Big]-\Esp\Big[\displaystyle\sum_{y\in\Phi}P_sF(\Phi)-P_sF(\Phi\setminus y)\Big]\d s
\end{equation*}

and then, by the definition of the Papangelou intensity,
\begin{eqnarray*}
  \Esp[F(\zeta)]-\Esp[F(\Phi)]&=&\int_0^{+\infty}\Esp\Big[\int_{\XX}D_x P_sF(\Phi)m(x)\d x)\Big]-\Esp\Big[\int_{\XX}c(x,\Phi)D_x P_sF(\Phi)\d x\Big]\d s\\
                              &=&\int_0^{+\infty}\Esp\Big[\int_{\XX}D_x P_sF(\Phi)(m(x)-c(x,\Phi))\d x\Big]\d s.
\end{eqnarray*}
Thus, by Lemma \ref{semigroupgradientcom},
\begin{equation*}
  \Esp[F(\zeta)]-\Esp[F(\Phi)]=\int_0^{+\infty}e^{-s}\Esp\Big[\int_{\XX}P_sD_{x}F(\Phi)(m(x)-c(x,\Phi))\d x\Big]\d s
\end{equation*}

and then, since $F\in\Lip_1(\widehat{N}_\XX,\mmmd_{TV})$ and $\|P_s\|\leq1$,
\begin{eqnarray*}
  \big|\Esp[F(\zeta)]-\Esp[F(\Phi)]\big|&\leq&\int_0^{+\infty}e^{-s}\Esp\Big[\int_{\XX}|D_{x}F(\Phi)||m(x)-c(x,\Phi)|\d x\Big]\d s\\
                                        &\leq&\int_{\XX}\Esp[|m(x)-c(x,\Phi)|]\d x.
\end{eqnarray*}
The proof is thus complete.
\end{proof}
\subsection{Proofs of Section \ref{sec_papangelou}} \label{subsec_proofs4}
\begin{proof}[Proof of Lemma \ref{repulsivelem1}]
  On one hand, by equation \ref{Papangelouprop2}, for any $x\in\XX$,
  $p_{0}\rho(x)=p_{0}\Esp[c(x,\Phi)]$, then, since $\Phi$ is repulsive,
  $p_{0}\rho(x)\leq p_{0}c(x,\varnothing)$. On the other hand, still by Theorem
  \ref{Papangelouprop2}, for any $x\in\XX$, $$\rho(x)=\Esp[c(x,\Phi)]\geq
  p_{0}c(x,\varnothing)$$

and it follows from both last inequalities that
\begin{equation*}
  |p_{0}c(x,\varnothing)-p_{0}\rho(x)|\leq (1-p_{0})p_{0}c(x,\varnothing),
\end{equation*}

hence, the result.
\end{proof}
\begin{proof}[Proof of Lemma \ref{repulsivelem2}]
  For any $x\in\XX$, by the triangle inequality,
\begin{equation*}
  \Esp[|c(x,\Phi)-\rho(x)|]\leq\Esp[|c(x,\Phi)-c(x,\varnothing)|]+\Esp[|c(x,\varnothing)-\rho(x)|].
\end{equation*}
Since $\Phi$ is weakly repulsive and since, in this case,
$\rho(x)=\Esp[c(x,\Phi)]\leq c(x,\varnothing)$, we deduce that
\begin{equation*}
  \Esp[|c(x,\Phi)-\rho(x)|]\leq\Esp[c(x,\varnothing)-c(x,\Phi)]+c(x,\varnothing)-\rho(x).
\end{equation*}
Hence, still since $\rho(x)=\Esp[c(x,\Phi)]$,
\begin{equation*}
  \Esp[|c(x,\Phi)-\rho(x)|]\leq2(c(x,\varnothing)-\rho(x)).
\end{equation*}
The proof is thus complete.
\end{proof}
\begin{proof}[Proof of Lemma \ref{Papangeloulem1}]
  This equation is deduced by applying the formula which defines the Papangelou
  intensity for $u:\XX\times\N_\XX\to\bbbr_+$ given for any $x\in\XX$ and
  $\phi\in\N_\XX$ by
\begin{equation*}
  u(x,\phi)=\textbf{1}_{\{\phi=\varnothing\}},
\end{equation*}

which concludes the proof.
\end{proof}
\begin{proof}[Proof of Theorem \ref{papreduction}]
  For any measurable function $u:\widehat{N}_\XX\to\bbbr_+$, by definition of
  $\Phi_{|\Lambda}$,
\begin{equation*}
  \Esp\Big[\displaystyle\sum_{x\in\Phi_{|\Lambda}}u(x,\Phi_{|\Lambda}\setminus x)\Big]=\Esp\Big[\displaystyle\sum_{x\in\Phi}u(x,(\Phi\setminus x)\cap\Lambda)\textbf{1}_{x\in\Lambda}\Big].
\end{equation*}
Then, by the definition of the Papangelou intensity,
\begin{equation*}
  \Esp\Big[\displaystyle\sum_{x\in\Phi_{|\Lambda}}u(x,\Phi_{|\Lambda}\setminus x)\Big]=\int_\XX\Esp\Big[c(x,\Phi)u(x,\Phi\cap\Lambda)\textbf{1}_{x\in\Lambda}\Big]\d x,
\end{equation*}

and the expected result is derived.
\end{proof}
\begin{proof}[Proof of Theorem \ref{papsuperposition}]
  Denoting $k_{[n]}=k_1+\dots+k_n$, for any measurable function
  $u:\XX\times\N_\XX\to\bbbr_+$,
\begin{equation*}
  \Esp\Big[\displaystyle\sum_{y\in\Phi_1+\dots+\Phi_n} u\Big(y,\displaystyle\sum_{i=1}^n\Phi_i \setminus \{y\}\Big)\Big]=\displaystyle\sum_{i=1}^n \Esp\Big[\displaystyle\sum_{y\in\Phi_i}u\Big(y,\displaystyle\sum_{i=1}^n\Phi_i \setminus \{y\}\Big)\Big].
\end{equation*}
Then, applying the definition of the Papangelou intensity for each $\Phi_i$,
\begin{equation*}
  \Esp\Big[\displaystyle\sum_{y\in\Phi_1+\dots+\Phi_n} u\Big(y,\displaystyle\sum_{i=1}^n\Phi_i \setminus \{y\}\Big)\Big]= \displaystyle\sum_{i=1}^n \Esp\Big[\int_E u(y,\displaystyle\sum_{i=1}^n\Phi_i)c_i(y,\Phi_i)\d y\Big],
\end{equation*}

from which we deduce that
\begin{equation*}
  \Esp\Big[\displaystyle\sum_{y\in\Phi_1+\dots+\Phi_n} u\Big(y,\displaystyle\sum_{i=1}^n\Phi_i \setminus \{y\}\Big)\Big]= \Esp\Big[\int_E u\Big(y,\displaystyle\sum_{i=1}^n\Phi_i\Big) \displaystyle\sum_{i=1}^n c_i(y,\Phi_i)\d y\Big],
\end{equation*}

which yields the identity verified by the Papangelou intensities.
\end{proof}
\begin{proof}[Proof of Corollary \ref{corpapsuperposition}]
  For any $i\in\{1,\dots,n\}$, let $c_i$ be a version of the Papangelou
  intensity of $\Phi_i$ such that $\Phi_i$ is weakly repulsive according to
  $c_i$. Then, by Theorem \ref{papsuperposition}, one can find a version $c$ of
  the Papangelou intensity of the superposition verifying, for any $x\in\XX$:
\begin{equation*}
  c(x,\varnothing)=\displaystyle\sum_{i=1}^n c_i(x,\varnothing)\geq\displaystyle\sum_{i=1}^n c_i(x,\Phi_i)=c(x,\displaystyle\sum_{i=1}^n \Phi_i)\ \text{a.s.},
\end{equation*}

from which we conclude the proof.
\end{proof}
\begin{proof}[Proof of Theorem \ref{papthinning1}]
  For any measurable function $u:\XX\times\N_\XX\to\bbbr_+$, one has:
  \begin{eqnarray*}
    \Esp\Big[\displaystyle\sum_{x\in\beta\circ\Phi}u(x,\beta\circ\Phi\setminus x)\Big]&=&\Esp\Big[\displaystyle\sum_{x\in\Phi}u(x,\beta\circ\Phi\setminus x)\textbf{1}_{x\in\beta\circ\Phi}\Big]\\
                                                                                      &=&\Esp\Big[\displaystyle\sum_{x\in\Phi}\displaystyle\sum_{\tau\subset\Phi}u(x,\tau\setminus x)\textbf{1}_{x\in\tau}\textbf{1}_{\tau=\beta\circ\Phi}\Big],
  \end{eqnarray*}

then, conditioning with respect to $\Phi$,
\begin{eqnarray*}
  \Esp\Big[\displaystyle\sum_{x\in\beta\circ\Phi}u(x,\beta\circ\Phi\setminus x)\Big]&=&\Esp\Big[\Esp\Big[\displaystyle\sum_{x\in\Phi}\displaystyle\sum_{\tau\subset\Phi}u(x,\tau\setminus x)\textbf{1}_{x\in\tau}\textbf{1}_{\tau=\beta\circ\Phi}\ |\ \Phi\Big]\Big] \\
                                                                                    &=&\Esp\Big[\displaystyle\sum_{x\in\Phi}\displaystyle\sum_{\tau\subset\Phi}\P(\tau=\beta\circ\Phi\ |\ \Phi)u(x,\tau\setminus x)\textbf{1}_{x\in\tau}\Big].
\end{eqnarray*}
Since, for any $\tau\subset\phi$,
$\P(\tau=\beta\circ\phi)=\Big(\prod_{x\in\tau}\beta(x)\Big)\Big(\prod_{x\in\phi\setminus\tau}\big(1-\beta(x)\big)\Big)$,
one gets:
\begin{eqnarray*}
  \Esp\Big[\displaystyle\sum_{x\in\beta\circ\Phi}u(x,\beta\circ\Phi\setminus x)\Big]&=&\Esp\Big[\displaystyle\sum_{x\in\Phi}\displaystyle\sum_{\tau\subset\Phi}\Big(\prod_{y\in\tau}\beta(y)\Big)\Big(\prod_{y\in\Phi\setminus\tau}\big(1-\beta(y)\big)\Big)u(x,\tau\setminus x)\textbf{1}_{x\in\tau}\Big] \\
                                                                                    &=&\Esp\Big[\displaystyle\sum_{x\in\Phi}\displaystyle\sum_{\tau\subset\Phi\setminus x}\beta(x)\Big(\prod_{y\in\tau}\beta(y)\Big)\Big(\prod_{y\in(\Phi\setminus x)\setminus\tau}\big(1-\beta(y)\big)\Big)u(x,\tau)\Big].
\end{eqnarray*}
Then, from the definition of the Papangelou intensity,
\begin{eqnarray*}
  &&\Esp\Big[\displaystyle\sum_{x\in\beta\circ\Phi}u(x,\beta\circ\Phi\setminus x)\Big]=\\
  &=&\int_\XX\Esp\Big[c(x,\Phi)\displaystyle\sum_{\tau\subset\Phi}\beta(x)\Big(\prod_{y\in\tau}\beta(y)\Big)\Big(\prod_{y\in\Phi\setminus\tau}\big(1-\beta(y)\big)\Big)u(x,\tau)\Big]\d x.
\end{eqnarray*}
The previous arguments yield
\begin{eqnarray*}
  \Esp\Big[\displaystyle\sum_{x\in\beta\circ\Phi}u(x,\beta\circ\Phi\setminus x)\Big]&=&\int_\XX\Esp\Big[\beta(x)c(x,\Phi)\displaystyle\sum_{\tau\subset\Phi}\P(\beta\circ\Phi=\tau\ |\ \Phi)u(x,\tau)\Big]\d x\\
                                                                                    &=&\int_\XX\Esp\Big[\beta(x)c(x,\Phi)\displaystyle\sum_{\tau\subset\Phi}\textbf{1}_{\beta\circ\Phi=\tau}u(x,\tau)\Big]\d x \\
                                                                                    &=&\int_\XX\Esp[\beta(x)c(x,\Phi)u(x,\beta\circ\Phi)]\d x,
\end{eqnarray*}

hence, the result.
\end{proof}
\begin{proof}[Proof of Theorem \ref{paprescaling}]
  By formula \ref{Papangelouprop1},
\begin{equation*}
  c^{(\epsilon)}(x,\phi)=\dfrac{j^{(\epsilon)}(x\phi)}{j^{(\epsilon)}(\phi)}\textbf{1}_{j^{(\epsilon)}(\phi)\neq0},
\end{equation*}

where $j^{(\epsilon)}$ is the Janossy function of $\Phi^{(\epsilon)}$, and then
the expected result is deduced, still by formula (\ref{Papangelouprop1}).
\end{proof}
\begin{proof}[Proof of Theorem \ref{purelyrandomprop2bis}]
  The Janossy function $j$ of a purely random point process is given for any
  $n\in\bbbn_0$ and any $x_1,\dots,x_n\in\XX$ by
  \begin{equation*}
    j(x_1,\dots,x_n)=p_n n!q(x_1)\dots q(x_n)
  \end{equation*}
  and then we deduce the expression of the Papangelou intensity from formula
  (\ref{Papangelouprop1}), which provides the link between Janossy function and
  Papangelou intensity. In particular, this implies that $\Phi$ is repulsive if
  and only if, for any $n\in\bbbn_0$ and any $x\in\XX$,
\begin{equation*}
  (n+2)\dfrac{p_{n+2}}{p_{n+1}}q(x)\leq(n+1)\dfrac{p_{n+1}}{p_n}q(x),
\end{equation*}

which is equivalent to the expected assertion, and that $\Phi$ is weakly
repulsive if and only if, for any $n\in\bbbn_0$ and any $x\in\XX$,
\begin{equation*}
  (n+1)\dfrac{p_{n+1}}{p_{n}}q(x)\leq\dfrac{p_{1}}{p_0}q(x),
\end{equation*}

hence, the result.
\end{proof}
\begin{proof}[Proof of Theorem \ref{CondPPPprop2bis}]
  A version of the Papangelou intensity is deduced from the Janossy function by
  formula (\ref{Papangelouprop1}) and the Janossy function $j$ of a finite
  conditional Poisson point process is given for any $\phi\in\widehat{\N}_\XX$
  by
\begin{equation}\label{CondPPPprop2}
  j(\phi)=\dfrac{e^{-M(\XX)}}{p_C}\displaystyle\prod_{x\in\phi}m(x)\textbf{1}_C(\phi),
\end{equation}

where $p_C=\bbbp(\Phi\in C)$ and $\Phi$ is the Poisson point process associated
to $\Phi_C$, which provides the expected expression. As a consequence, $\Phi$ is
repulsive if and only if, for any $x,x_1,\dots,x_n,x_{n+1}\in\XX$,
\begin{equation*}
  m(x)\textbf{1}_{\{x_1,\dots,x_{n+1},x\}\in C}\textbf{1}_{\{x_1,\dots,x_{n+1}\}\in C}\leq m(x)\textbf{1}_{\{x_1,\dots,x_n,x\}\in C}\textbf{1}_{\{x_1,\dots,x_n\}\in C}.
\end{equation*}
Hence, if $C$ is decreasing, then this last hypothesis is verified, and $\Phi$
is repulsive.
\end{proof}
\begin{proof}[Proof of Theorem \ref{Gibbsprop1}]
  The expression of the Papangelou intensity is deduced from the definition of a
  Gibbs point process and from formula (\ref{Papangelouprop1}). In order to show
  that $\Phi$ is repulsive, one can observe that, for any
  $x,x_1,\dots,x_n,x_{n+1}\in\XX$,
\begin{multline*}
  \big(U(x_1,\dots,x_n,x_{n+1},x)-U(x_1,\dots,x_n,x_{n+1})\big)-\big(U(x_1,\dots,x_n,x)-U(x_1,\dots,x_n)\big)= \\
  \shoveleft{=\big(\Psi_1(x)+\displaystyle\sum_{r=1}^{n+2}\displaystyle\sum_{1\leq i_1<\dots< i_{r-1}\leq n+1}\Psi_r(x_{i_1},\dots,x_{i_{r-1}},x)\big)}\\
  \shoveright{-\big(\Psi_1(x)+\displaystyle\sum_{r=1}^{n+1}\displaystyle\sum_{1\leq i_1<\dots< i_{r-1}\leq n}\Psi_r(x_{i_1},\dots,x_{i_{r-1}},x)\big)} \\
  \shoveleft{=\Psi_{n+2}(x_1,\dots,x_{n+1},x)+\displaystyle\sum_{r=2}^{n+1}\displaystyle\sum_{1\leq i_1<\dots< i_{r-2}\leq n}\Psi_r(x_{i_1},\dots,x_{i_{r-2}},x_{n+1},x)} \\
  \shoveleft\geq0.\\
\end{multline*}
The proof is thus complete.
\end{proof}
\subsection{Proofs of Section \ref{sec_applications}} \label{subsec_proofs5}
\begin{proof}[Proof of Theorem \ref{Superpoisson}]
  For any $i\in\{1,2\}$, the Papangelou intensity of $\zeta_i$ with respect to
  $M_1+M_2$ is given by $\frac{\d M_i}{\d(M_1+M_2)}$. The result is deduced by
  combining Theorem \ref{Steinprop} and equation (\ref{remdTV}).
\end{proof}
\begin{proof}[Proof of Theorem \ref{Supercox}]
  Using the notations of the definition of Kantorovich-Rubinstein,
\begin{eqnarray*}
  \mmmd_{TV}^*(\Gamma_1,\Gamma_2)&=& \inf_{\bC\in\Sigma(\P_{\Gamma_1},\P_{\Gamma_2})}\int_{\N_\XX\times\N_\XX}\mmmd_{TV}(\omega_1,\omega_2)\bC(\d(\omega_1,\omega_2))\\
                                 &\leq& \inf_{\bC\in\Sigma(\P_{M_1},\P_{M_2})} \int_{\bbbm\times\bbbm} \mmmd_{TV}^*(\zeta_{\varphi_1},\zeta_{\varphi_2}) \bC(\d(\varphi_1,\varphi_2)).
\end{eqnarray*}
By Theorem \ref{Superpoisson}, it follows as expected that
\begin{equation*}
  \mmmd_{TV}^*(\Gamma_1,\Gamma_2)\leq \inf_{\bC\in\Sigma(\P_{M_1},\P_{M_2})} \int_{\bbbm\times\bbbm} \mmmd_{TV}(\varphi_1,\varphi_2)\bC(\d(\varphi_1,\varphi_2)),
\end{equation*}

from which we conclude the proof.
\end{proof}
\begin{proof}[Proof of Theorem \ref{Supertheo5}]
  The point process $\Phi$ has a Papangelou intensity $c$ given for any
  $x,x_1,\dots,x_n\in\XX$ by:
$$c(x,\{x_1,\dots,x_n\})=\dfrac{n+1}{M(\XX)}\dfrac{p_{n+1}}{p_n}m(x).$$
Then, by Theorem \ref{Steinprop},
\begin{equation*}
  \mmmd_{TV}^*(\Phi,\zeta_M)\leq\int_\XX \displaystyle\sum_{n=0}^{+\infty} p_n\Big|\dfrac{n+1}{M(\XX)}\frac{p_{n+1}}{p_n}m(x)-m(x)\Big|\d x,
\end{equation*}

and then
\begin{equation*}
  \mmmd_{TV}^*(\Phi,\zeta_M)\leq\displaystyle\sum_{n=0}^{+\infty}\big|(n+1)p_{n+1}-M(\XX)p_n\big|.
\end{equation*}
The proof is thus complete.
\end{proof}
\begin{proof}[Proof of Theorem \ref{Supertheo7}]
  By Theorem \ref{Steinprop} and from the expression of the Papangelou intensity
  of $\Phi_C$ (Theorem \ref{CondPPPprop2bis}),
\begin{eqnarray*}
  \mmmd_{TV}^*(\Phi_C,\Phi)&\leq&\int_\XX\Esp[|m(x)-m(x)\textbf{1}_{C}(\Phi_C x)\textbf{1}_{C}(\Phi_C)|]\d x \\
                           &=&\int_\XX m(x)\bbbp(\Phi_C x\notin C)\d x,
\end{eqnarray*}

since $\Phi_C\in C$ almost surely.
\end{proof}
\begin{proof}[Proof of Corollary \ref{cor1Supertheo7}]
  By Theorem \ref{Supertheo7},
\begin{equation*}
  \mmmd_{TV}^*(\Phi_{R},\Phi)\leq\lambda\int_\XX\bbbp(\Phi_{R}x\notin C_R)\d x,
\end{equation*}

then, by formula (\ref{CondPPPprop2}),
\begin{multline*}
  \mmmd_{TV}^*(\Phi_{R},\Phi)\leq e^{-\lambda|\Lambda|}\dfrac{\lambda}{p_R}\\
  \times \sum_{k=0}^{+\infty}\dfrac{\lambda^k}{k!}\int_{\Lambda^{k+1}}\textbf{1}_{C_R^c}(\{x_1,\dots,x_k,x\})\textbf{1}_{C_R}(\{x_1,\dots,x_k\}) \d x_1\dots\d x_k\d x,
\end{multline*} 

and then, since $\textbf{1}_{C_R}\leq 1$ and
$\textbf{1}_{C_R^c}=1-\textbf{1}_{C_R}$,
\begin{multline*}
  \mmmd_{TV}^*(\Phi_{R},\Phi)\leq e^{-\lambda|\Lambda|}\dfrac{\lambda}{p_R}\\
  \times\displaystyle\sum_{k=0}^{+\infty}\dfrac{\lambda^k}{k!}\int_{\Lambda^{k+1}}(1-\textbf{1}_{C_R}(\{x_1,\dots,x_k,x\}))\d x_1\dots\d x_k\d x.
\end{multline*} 
Moreover, since
$\textbf{1}_{C_R}(\{x_1,\dots,x_k,x\})\geq\displaystyle\prod_{i=1}^k\textbf{1}_{\mmmd_\XX(x_i,x)\geq
  R}$, one has
\begin{multline*}
  \mmmd_{TV}^*(\Phi_{R},\Phi)\leq e^{-\lambda|\Lambda|}\dfrac{\lambda}{p_R}\displaystyle\sum_{k=0}^{+\infty}\dfrac{\lambda^k}{k!}\int_{\Lambda^{k+1}}(1-\displaystyle\prod_{i=1}^k\textbf{1}_{\mmmd_\XX(x_i,x)\geq R})\d x_1\dots\d x_k\d x\\
                             =e^{-\lambda|\Lambda|}\dfrac{\lambda}{p_R}\displaystyle\sum_{k=0}^{+\infty}\dfrac{\lambda^k}{k!}\int_{\Lambda^{k}}|\Lambda|-\Big(\int_\Lambda\displaystyle\prod_{i=1}^k\textbf{1}_{\mmmd_\XX(x_i,x)\geq R}\d x\Big)\d x_1\dots\d x_k,
\end{multline*} 

and then, since $V_d(R)$ is the volume of a ball of $\bbbr^d$ with radius $R$,
\begin{eqnarray*}
  \mmmd_{TV}^*(\Phi_{R},\Phi)&\leq& e^{-\lambda|\Lambda|}\dfrac{\lambda}{p_R}\displaystyle\sum_{k=0}^{+\infty}\dfrac{\lambda^k}{k!}\int_{\Lambda^{k}}|\Lambda|-(|\Lambda|-kV_d(R))\d x_1\dots\d x_k \\
                             &=&\dfrac{\lambda^2|\Lambda|}{p_{R}}V_d(R).
\end{eqnarray*}
The proof is thus complete.
\end{proof}
\begin{proof}[Proof of Corollary \ref{cor2Supertheo7}]
  By Theorem \ref{Supertheo7},
\begin{equation*}
  \mmmd_{TV}^*(\Phi_{N},\Phi)\leq\int_\XX\bbbp(\Phi_N x\notin C_N)m(x)\d x,
\end{equation*}

then, since by formula (\ref{CondPPPprop2}), for any $x\in\XX$,
\begin{multline*}
  \bbbp(\Phi_N x\notin C_N)\\=\dfrac{e^{-M(\XX)}}{p_N}\displaystyle\sum_{k=0}^{+\infty}\dfrac{1}{k!}\int_{\XX^{k+1}}\textbf{1}_{C_N^c}(\{x_1,\dots,x_k,x\})\textbf{1}_{C_N}(\{x_1,\dots,x_k\})\otimes_{j=1}^{k} M(\d x_j),
\end{multline*}

it yields
\begin{eqnarray*}
  \mmmd_{TV}^*(\Phi_{N},\Phi)&\leq&\dfrac{e^{-M(\XX)}}{p_N}\dfrac{1}{N!}\int_{\XX^{N+1}}m(x_1)\dots m(x_N)m(x)\d x_1\dots\d x_N\d x\\
                             &=&\dfrac{e^{-M(\XX)}}{p_N}\dfrac{1}{N!}(M(\XX))^{N+1}.
\end{eqnarray*}
The proof is thus complete.
\end{proof}
\begin{proof}[Proof of Theorem \ref{supertheo}]
  For any $k\in\bbbn_0$, we use the notation $p_{n,i,k}:=\P(|\Phi_{n,i}|=k)$. By
  Theorem \ref{Steinprop},
 
\begin{equation*}
  \mmmd_{TV}^*(\Phi_n,\zeta_M)\leq\int_\XX\Esp[|c_n(x,\Phi_n)-m(x)|]\d x.
\end{equation*}
Then, by Theorem \ref{papsuperposition}, $\mmmd_{TV}^*(\Phi_n,\zeta_M)\leq
R_n+\displaystyle\sum_{i=1}^n A_{n,i}$, where
 
\begin{eqnarray*}
  A_{n,i}&=&\int_\XX\Esp[|c_{n,i}(x,\Phi_{n,i})-\rho_{n,i}(x)|]\ell(\d x) \\
         &=&\displaystyle\sum_{k\geq0} \int_\XX\Esp[|c_{n,i}(x,\Phi_{n,i})-\rho_{n,i}(x)|{\bf 1}_{\{|\Phi_{n,i}|=k\}}]\ell(\d x) \\
         &=&B_{n,i}+C_{n,i}
\end{eqnarray*}

with
\begin{equation*}
  B_{n,i}=p_{n,i,0}\int_\XX|c_{n,i}(x,\varnothing)-\rho_{n,i}(x)|\ell(\d x),
\end{equation*}
\begin{equation*}
  C_{n,i}=\displaystyle\sum_{k\geq1} \int_\XX\Esp[|c_{n,i}(x,\Phi_{n,i})-\rho_{n,i}(x)|{\bf 1}_{\{|\Phi_{n,i}|=k\}}]\ell(\d x).
\end{equation*}
By Lemma \ref{Papangeloulem1},
\begin{equation*}
  p_{n,i,0}\int_\XX c_{n,i}(x,\varnothing) \ell(\d x)=p_{n,i,1}\leq(1-p_{n,i,0})
\end{equation*}

and by Lemma \ref{repulsivelem1} we get
\begin{equation*}
  B_{n,i}\leq (1-p_{n,i,0})^2.
\end{equation*}
Since $c_{n,i}(x,\Phi_{n,i})\leq c_{n,i}(x,\varnothing)$ and $\rho_{n,i}(x)\leq
c_{n,i}(x,\varnothing)$, we also have
\begin{equation*}
  C_{n,i}\leq\displaystyle\sum_{k\geq1} p_{n,i,k}\int_\XX c_{n,i}(x,\varnothing) \ell(\d x)=(1-p_{n,i,0})\int_\XX c_{n,i}(x,\varnothing) \ell(\d x)\leq(1-p_{n,i,0})^2,
\end{equation*}

and then we get
\begin{equation*}
  A_{n,i}\leq2(1-p_{n,i,0})^2\leq2\Big(\int_{\XX}\rho_{n,i}(x)\ell(\d x)\Big)^2
\end{equation*}

where the second equation is derived from the Markov inequality. Hence,
\begin{equation*}
  \mmmd_{TV}^*(\Phi_n,\zeta_M)\leq R_n+2n\Big(\displaystyle\max_{i\in\{1,\dots,n\}}\int_{\XX}\rho_{n,i}(x)\ell(\d x)\Big)^2,
\end{equation*}

from which we conclude the proof.
\end{proof}
\begin{proof}[Proof of Corollary \ref{supertheocor2}]
  Since the $(-1/n)$-determinantal point process $\Phi_n$ with kernel $K$ is the
  independent superposition of $n$ determinantal point processes with kernel
  $\frac{1}{n}K$, it follows By Remark \ref{supertheocor1} that, for any
  $n\in\bbbn$,
\begin{equation*}
  \mmmd_{TV}^*(\Phi_n,\zeta_M)\leq R_n+\dfrac{2C^2}{n},
\end{equation*}

where
\begin{equation*}
  R_n=\int_\XX \Big|\displaystyle\sum_{i=1}^n \dfrac{1}{n}K(x,x)-K(x,x)\Big|\ell(\d x)=0
\end{equation*}

and
\begin{equation*}
  C=\int_\XX K(x,x)\d x,
\end{equation*}

from which we can conclude.
\end{proof}
\begin{proof}[Proof of Corollary \ref{supertheocor3}]
  The result is obtained by applying Theorem \ref{supertheo} to
  $(\Phi_{n,i})_{1\leq i\leq n}$ such that for each $n\in\bbbn$ and
  $i\in\{1,\dots,n\}$, $\Phi_{n,i}=\{X_{n,i}\}\cap\Lambda$.
\end{proof}
\begin{proof}[Proof of Theorem \ref{supertheo1bis}]
  By Theorem \ref{Steinprop}, one has for any $n\in\bbbn$,
\begin{equation*}
  \mmmd_{TV}^*(\Phi_n,\zeta)\leq\int_\Lambda\Esp\big[\big|c_n(x,\Phi_n)-m(x)\big|\big]\d x,
\end{equation*}

where $c_n$ is the Papangelou intensity of $\Phi_n$. Combining Theorem
\ref{papsuperposition} for the Papangelou intensity of an independent
superposition and Theorem \ref{papthinning1} for the Papangelou intensity of a
thinning, it follows that
\begin{equation*}
  \mmmd_{TV}^*(\Phi_n,\zeta)\leq\int_\Lambda\Esp\Big[\Big|\displaystyle\sum_{k=1}^n\dfrac{1}{n}\Esp\big[c(x,\Phi^{(k)})\ \big|\ \dfrac{1}{n}\circ\Phi^{(k)}\big]-m(x)\Big|\Big]\d x.
\end{equation*}
Hence, by Jensen's inequality,
\begin{equation*}
  \mmmd_{TV}^*(\Phi_n,\zeta)\leq\int_\Lambda\sqrt{\Var\Big[\dfrac{1}{n}\displaystyle\sum_{k=1}^n\Esp\big[c(x,\Phi^{(k)})\ \big|\ \dfrac{1}{n}\circ\Phi^{(k)}\big]\Big]}\d x,
\end{equation*}

and, by some variance properties,
\begin{eqnarray*}
  \mmmd_{TV}^*(\Phi_n,\zeta)&\leq&\dfrac{1}{\sqrt{n}}\int_\Lambda\sqrt{\Var[\Esp[c(x,\Phi)\ |\ \dfrac{1}{n}\circ\Phi]]}\d x\\
                            &\leq&\dfrac{1}{\sqrt{n}}\int_\Lambda\sqrt{\Var[c(x,\Phi)]}\d x.
\end{eqnarray*}
By hypothesis, for any $x\in\Lambda$, $\Var[c(x,\Phi)]\leq K(x)$ and one deduces
the expected result.
\end{proof}
\begin{proof}[Proof of Theorem \ref{Supertheo3}]
  The family of determinantal point processes is stable with respect to several
  transformations: the reduction to a compact set, the thinning and the
  rescaling. Their corresponding kernels are respectively provided by formulas
  (\ref{reductiondeterminantal}), (\ref{thinningdeterminantal}) and
  (\ref{rescalingdeterminantal}). Combining these expressions, it follows that
  $\Phi_{\Lambda,\beta}$ is the determinantal point process with kernel
  $K_{\Lambda,\beta}$ defined by
  \begin{equation*} \label{beta} K_{\Lambda,\beta}:(x,y)\in \XX\times\XX\mapsto
    K\Big(\dfrac{x}{{\beta}^{\frac{1}{d}}},\dfrac{y}{{\beta}^{\frac{1}{d}}}\Big)\textbf{1}_{\Lambda\times\Lambda}(x,y).
  \end{equation*}
  By Theorem \ref{DPPprop1}, there exists a complete orthonormal basis $(h_j,\,
  {j\in\bbbn})$ of $ L^2(\XX,\ell;\bbbc)$ and a sequence $(\lambda_j,\,
  {j\in\bbbn})\subset [0,1]^\bbbn $ such that for any $x,y\in \XX$,
\begin{equation*} 
  K(x,y)=\displaystyle\sum_{j=1}^{+\infty}\lambda_j h_j(x)h_j(y). 
\end{equation*}
Then, for any $x,y\in \XX$,
\begin{eqnarray*}
  K_{\Lambda,\beta}(x,y)&=&K\Big(\dfrac{x}{{\beta}^{\frac{1}{d}}},\dfrac{y}{{\beta}^{\frac{1}{d}}}\Big)\textbf{1}_{\Lambda\times\Lambda}(x,y) \\
                        &=&\displaystyle\sum_{j=1}^{+\infty}\lambda_jh_j\Big(\dfrac{x}{{\beta}^{\frac{1}{d}}}\Big)\textbf{1}_{\Lambda}(x)h_j\Big(\dfrac{y}{{\beta}^{\frac{1}{d}}}\Big)\textbf{1}_{\Lambda}(y) \\
                        &=&\displaystyle\sum_{j=1}^{+\infty}\lambda_{\Lambda,\beta,j} {h}_{\Lambda,\beta,j}(x){h}_{\Lambda,\beta,j}(y),
\end{eqnarray*}

where, for any $j\in\bbbn$ and any $x\in\XX$,
$$Z_{\Lambda,\beta,j}^2=\int_{\beta^{-\frac{1}{d}}\Lambda}|h_j(y)|^2\d y,$$
$${h}_{\Lambda,\beta,j}(x)=\dfrac{1}{\sqrt{\beta}}Z_{\Lambda,\beta,j}^{-1}h_j\Big(\dfrac{x}{{\beta}^{\frac{1}{d}}}\Big)\textbf{1}_{\Lambda}(x),$$
$$\lambda_{\Lambda,\beta,j}=\lambda_j\beta Z_{\Lambda,\beta,j}^2.$$
By Theorem \ref{DPPprop1}, since, for any $j\in\bbbn$,
$\lambda_{\Lambda,\beta,j}<1$, one can associate to $K_{\Lambda,\beta}$ the
kernel $J_{\Lambda,\beta}$ such that for any $x,y\in\bbbr^d$,
\begin{equation*}
  J_{\Lambda,\beta}(x,y)=\displaystyle\sum_{j=1}^{+\infty}\dfrac{\lambda_{\Lambda,\beta,j}}{1-\lambda_{\Lambda,\beta,j}} {h}_{\Lambda,\beta,j}(x){h}_{\Lambda,\beta,j}(y),
\end{equation*}

and, by Theorem \ref{DPPpap}, for any $x\in\bbbr^d$,
\begin{equation*}
  J_{\Lambda,\beta}(x,x)=c_{\Lambda,\beta}(x,\varnothing).
\end{equation*}
In particular, still by Theorem \ref{DPPpap}, $\Phi_{\Lambda,\beta}$ is a
weakly repulsive point process, then, by Lemma \ref{repulsivelem2}, for any
$x\in\Lambda$ and $\phi\in\N_{\Lambda}$,
\begin{equation*}
  \Esp[|c_{\Lambda,\beta}(x,\phi)-\lambda|]\leq 2(c_{\Lambda,\beta}(x,\varnothing)-\lambda).
\end{equation*}

Then, by Theorem \ref{Steinprop},
\begin{equation*}
  \mmmd_{TV}^*(\Phi_{\Lambda,\beta},\zeta_{\Lambda,\lambda})\leq2\int_\Lambda \big(c_{\Lambda,\beta}(x,\varnothing)-\lambda\big)\, \d x.
\end{equation*}
By previous identities, one has
\begin{equation*}
  \int_\Lambda c_{\Lambda,\beta}(x,\varnothing)\d x=\displaystyle\sum_{j=1}^{+\infty}\dfrac{\lambda_{\Lambda,\beta,j}}{1-\lambda_{\Lambda,\beta,j}}.
\end{equation*}
Then, noting that
\begin{multline*}
  \int_\Lambda \lambda \d x=\int_\Lambda K\Big(\dfrac{x}{\beta^{\frac{1}{d}}},\dfrac{x}{\beta^{\frac{1}{d}}}\Big) \d x=\int_\Lambda K_{\Lambda,\beta}(x,x) \d x\\ =\int_\Lambda \displaystyle\sum_{j=1}^{+\infty}\lambda_{\Lambda,\beta,j}h_{\Lambda,\beta,j}^2(x) \d x=\displaystyle\sum_{j=1}^{+\infty}\lambda_{\Lambda,\beta,j},
\end{multline*}

one obtains
\begin{equation*}
  \mmmd_{TV}^*(\Phi_{\Lambda,\beta},\zeta_{\Lambda,\lambda})\leq2 \displaystyle\sum_{j=1}^{+\infty}\dfrac{\lambda_{\Lambda,\beta,j}}{1-\lambda_{\Lambda,\beta,j}}-\lambda_{\Lambda,\beta,j}=2 \displaystyle\sum_{j=1}^{+\infty}\dfrac{\lambda_{\Lambda,\beta,j}^2}{1-\lambda_{\Lambda,\beta,j}},
\end{equation*}

and, using for any $j\in\bbbn$ the expression of $\lambda_{\Lambda,\beta,j}$,
\begin{equation*}
  \mmmd_{TV}^*(\Phi_{\Lambda,\beta},\zeta_{\Lambda,\lambda})\leq2 \displaystyle\sum_{j=1}^{+\infty}\dfrac{\lambda_j^2\beta^2 Z_{\Lambda,\beta,j}^4}{1-\lambda_j\beta Z_{\Lambda,\beta,j}^2}.
\end{equation*}
Since $\lambda_j\leq1$ and $Z_{\Lambda,\beta,j}^2\leq1$, it follows that
\begin{equation*}
  \mmmd_{TV}^*(\Phi_{\Lambda,\beta},\zeta_{\Lambda,\lambda})\leq2\dfrac{\beta^2}{1-\beta}\displaystyle\sum_{j=1}^{+\infty}\lambda_jZ_{\Lambda,\beta,j}^2,
\end{equation*}

and the computation of the right hand side provides:
\begin{eqnarray*}
  2\dfrac{\beta^2}{1-\beta}\displaystyle\sum_{j=1}^{+\infty}\lambda_jZ_{\Lambda,\beta,j}^2&=&2\dfrac{\beta^2}{1-\beta}\int_{\beta^{-\frac{1}{d}}\Lambda}\displaystyle\sum_{j=1}^{+\infty}\lambda_j |h_j(x)|^2\d x \\
                                                                                          &=&2\dfrac{\beta^2}{1-\beta}\int_{\beta^{-\frac{1}{d}}\Lambda} \lambda \d x \\
                                                                                          &=&2\dfrac{\beta}{1-\beta} \lambda|\Lambda|,
\end{eqnarray*}

which concludes the proof.
\end{proof}
\begin{proof}[Proof of Theorem \ref{Supertheo6}]
  By Theorem \ref{Gibbsprop1}, the point process $\Phi$ has a Papangelou
  intensity $c$ given for any $x,x_1,\dots,x_k\in\XX$ by:
$$c(x,\{x_1,\dots,x_k\})=e^{-\theta(\Psi_1(x)+\sum_{i=1}^k\Psi_2(x,x_i))}.$$
Then,
\begin{multline*}
  |c(x,\{x_1,\dots,x_k\})-e^{-\theta\Psi_1(x)}|=e^{-\theta\Psi_1(x)}|e^{-\theta\sum_{i=1}^k\Psi_2(x,x_i)}-1|\\
  \leq e^{-\theta\Psi_1(x)}(1-e^{-\theta k\epsilon}),
\end{multline*}
and, since for any $x\geq0$, $1-e^{-x}\leq x$, one gets
$$1-\Esp[e^{-\theta|\Phi|\epsilon}]\leq\Esp[\theta|\Phi|\epsilon]=\theta\epsilon\Esp[|\Phi|].$$
Moreover, by Theorem \ref{Papangelouprop2},
\begin{equation*}
  \Esp[|\Phi|]=\int_\XX \Esp[c(x,\Phi)]\d x=\int_\XX \Esp[e^{-\theta(\Psi_1(x)+\sum_{y\in\Phi}\Psi_2(x,y))}]\d x,
\end{equation*}

and, since $\Psi_2\geq0$, it follows that
\begin{equation*}
  \Esp[|\Phi|]\leq\int_\XX e^{-\theta\Psi_1(x)} \d x=M(\XX).
\end{equation*}
As a consequence, by Theorem \ref{Steinprop},
\begin{eqnarray*}
  \mmmd_{TV}^*(\Phi,\zeta_M)&\leq&\int_\XX\Esp[|c(x,\Phi)-e^{-\theta\Psi_1(x)}|]\d x \\
                            &=&(M(\XX))^2\theta\epsilon.
\end{eqnarray*}
The proof is thus complete.
\end{proof}
\begin{proof}[Proof of Lemma \ref{lem1Supertheo4}]
  This equation is directly deduced from the definition of the Polish distance
  $\mmmd_P$.
\end{proof}
\begin{proof}[Proof of Lemma \ref{lem2Supertheo4}]
We want to find  a measurable function $c:\XX\times\N_\XX\to\bbbr_+$
  verifying, for any measurable function $u:\XX\times\N_\XX\to\bbbr_+$,
\begin{equation*}
  \Esp\Big[\displaystyle\sum_{x\in{p}\circ\varphi}u(x,({p}\circ\varphi)\setminus x)\Big]=\int_\XX \Esp[c(x,{p}\circ\varphi)u(x,{p}\circ\varphi)] p(x)\varphi(\d x).
\end{equation*}
On one hand, let us compute the left hand side:
\begin{eqnarray*}
  \Esp\Big[\displaystyle\sum_{x\in{p}\circ\varphi}u(x,({p}\circ\varphi)\setminus x)\Big]&=&\Esp\Big[\displaystyle\sum_{\eta\subset\varphi}\textbf{1}_{\{\eta={p}\circ\varphi\}}\displaystyle\sum_{x\in\eta}u(x,\eta\setminus x)\Big] \\
                                                                                        &=&\displaystyle\sum_{\eta\subset\varphi} \P(\eta={p}\circ\varphi) \displaystyle\sum_{x\in\eta}u(x,\eta\setminus x).
\end{eqnarray*}
Then, since for any $\eta\subset\varphi$,
\begin{equation*}
  \P(\eta={p}\circ\varphi)=\Big(\displaystyle\prod_{t\in\eta}p(t)\Big) \Big(\displaystyle\prod_{s\in\varphi\setminus\eta}(1-p(s))\Big),
\end{equation*}

it follows that
\begin{equation*}
  \Esp\Big[\displaystyle\sum_{x\in{p}\circ\varphi}u(x,({p}\circ\varphi)\setminus x)\Big]=\displaystyle\sum_{\eta\subset\varphi} \Big(\displaystyle\prod_{t\in\eta}p(t)\Big) \Big(\displaystyle\prod_{s\in\varphi\setminus\eta}(1-p(s))\Big) \displaystyle\sum_{x\in\eta}u(x,\eta\setminus x).
\end{equation*}
Hence,
\begin{multline*}
  \Esp\Big[\displaystyle\sum_{x\in{p}\circ\varphi}u(x,({p}\circ\varphi)\setminus x)\Big] =\\
  {=\displaystyle\sum_{x\in\varphi} \displaystyle\sum_{\substack{\eta\subset\varphi \\ x\in\eta}} \Big(\displaystyle\prod_{t\in\eta}p(t)\Big) \Big(\displaystyle\prod_{s\in\varphi\setminus\eta}(1-p(s))\Big) u(x,\eta\setminus x)} \\
  {=\displaystyle\sum_{x\in\varphi} \displaystyle\sum_{\eta\subset\varphi\setminus\{x\}} \Big(\displaystyle\prod_{t\in\eta}p(t)\Big) p(x) \Big(\displaystyle\prod_{s\in\varphi\setminus\eta}(1-p(s))\Big) \dfrac{1}{1-p(x)} u(x,\eta),}\\
\end{multline*}
and finally
\begin{multline*}
  \Esp\Big[\displaystyle\sum_{x\in{p}\circ\varphi}u(x,({p}\circ\varphi)\setminus
  x)\Big]\\
  =\displaystyle\sum_{x\in\varphi} \displaystyle\sum_{\eta\subset\varphi} \textbf{1}_{\{x\notin\eta\}} \Big(\displaystyle\prod_{t\in\eta}p(t)\Big) \Big(\displaystyle\prod_{s\in\varphi\setminus\eta}(1-p(s))\Big) \dfrac{p(x)}{1-p(x)} u(x,\eta).
\end{multline*}
On the other hand, for a given measurable function
$c:\XX\times\N_\XX\to\bbbr_+$,
\begin{multline*}
  \int_\XX \Esp[c(x,{p}\circ\varphi)u(x,{p}\circ\varphi)] p(x)\varphi(\d x)= \\
  \begin{aligned}
    &=\int_\XX \Esp\Big[\displaystyle\sum_{\eta\subset\varphi} \textbf{1}_{\{\eta={p}\circ\varphi\}} c(x,\eta)u(x,\eta)\Big] p(x)\varphi(\d x) \\
    &=\int_\XX \displaystyle\sum_{\eta\subset\varphi} \P(\eta={p}\circ\varphi) c(x,\eta)u(x,\eta) p(x)\varphi(\d x) \\
    &=\displaystyle\sum_{x\in\varphi} p(x) \displaystyle\sum_{\eta\subset\varphi} \Big(\displaystyle\prod_{t\in\eta}p(t)\Big) \Big(\displaystyle\prod_{s\in\varphi\setminus\eta}(1-p(s))\Big) c(x,\eta)u(x,\eta),\\
  \end{aligned}
\end{multline*}

where the last equality is obtained by the expression of
$\P(\eta=p\circ\varphi)$ as above. This implies that
\begin{multline*}
  \int_\XX \Esp[c(x,{p}\circ\varphi)u(x,{p}\circ\varphi)] p(x)\varphi(\d x) = \\
  = \displaystyle\sum_{x\in\varphi} \displaystyle\sum_{\eta\subset\varphi}
  \Big(\displaystyle\prod_{t\in\eta}p(t)\Big)
  \Big(\displaystyle\prod_{s\in\varphi\setminus\eta}(1-p(s))\Big) p(x)
  c(x,\eta)u(x,\eta),
\end{multline*}
and the result is got by identification.
\end{proof}
\begin{proof}[Proof of Lemma \ref{lem3Supertheo4}]
  For any $\varphi\in N_\XX$, let $\zeta_{p\varphi}$ be a Poisson point process
  with intensity measure $p\varphi$. By Theorem \ref{Steinprop},
\begin{equation*}
  \mmmd_{TV}^*(\P_{p\circ\varphi},\P_{\zeta_{p\varphi}}) \leq \int_\XX \Esp[|c(x,p\circ\varphi)-1|] p(x)\varphi(\d x),
\end{equation*}

where $c$ is a version of the Papangelou intensity of $p\circ\varphi$ with
respect to $p\varphi$. An expression of $c$ is given by Lemma
\ref{lem2Supertheo4}, and it follows that
\begin{equation*}
  \mmmd_{TV}^*(\P_{p\circ\varphi},\P_{\zeta_{p\varphi}}) \leq \int_\XX \Esp\Big[\Big|\textbf{1}_{\{x\in\varphi\setminus p\circ\varphi\}}\dfrac{1}{1-p(x)}-1\Big|\Big] p(x)\varphi(\d x).
\end{equation*}
Hence, the computation of the right hand side in the last inequality aims to
obtain that
\begin{eqnarray*}
  \mmmd_{TV}^*(\P_{p\circ\varphi},\P_{\zeta_{p\varphi}})&\leq& \int_\XX \Big(\Esp\Big[\textbf{1}_{\{x\in p\circ\varphi\}}\Big|\textbf{1}_{\{x\in\varphi\setminus p\circ\varphi\}}\dfrac{1}{1-p(x)}-1\Big|\Big]\\
                                                        &&+ \Esp\Big[\textbf{1}_{\{x\notin p\circ\varphi\}}\Big|\textbf{1}_{\{x\in\varphi\setminus p\circ\varphi\}}\dfrac{1}{1-p(x)}-1\Big|\Big]\Big) p(x)\varphi(\d x) \\
                                                        &=& \int_\XX \Big(p(x) + (1-p(x))\big(\dfrac{1}{1-p(x)}-1\big) \Big) p(x)\varphi(\d x)\\
                                                        &=& 2\displaystyle\sum_{x\in \varphi}p^2(x),
\end{eqnarray*}

and we can deduce that
\begin{equation*}
  \mmmd_{TV}^*(\P_{p\circ\Phi},\P_{\Gamma_{p\Phi}}) \leq 2\Esp\Big[\displaystyle\sum_{x\in\Phi}p^2(x)\Big].
\end{equation*}
The conclusion follows.
\end{proof}
\begin{proof}[Proof of Theorem \ref{Supertheo4}]
  By the triangle inequality,
\begin{equation*}
  \mmmd_P({\Gamma_M},{p\circ\Phi})\leq \mmmd_P({\Gamma_M},{\Gamma_{p\Phi}}) + \mmmd_P({\Gamma_{p\Phi}},{p\circ\Phi}).
\end{equation*}
One one hand, by Lemma \ref{lem1Supertheo4},
\begin{equation*}
  \mmmd_P({\Gamma_M},{\Gamma_{p\Phi}})=\overline{\mmmd}_P({M},{p\Phi}).
\end{equation*}
On the other hand, since $(f_k)_{k\in\bbbn}\subset\Lip_1(\mmmd_{TV})$,
\begin{equation*}
  \mmmd_P({\Gamma_{p\Phi}},{p\circ\Phi})\leq \mmmd_{TV}^*({\Gamma_{p\Phi}},{p\circ\Phi}),
\end{equation*}

and then, by Lemma \ref{lem3Supertheo4},
\begin{equation*}
  \mmmd_P({\Gamma_{p\Phi}},{p\circ\Phi})\leq 2\Esp\Big[\displaystyle\sum_{x\in\Phi}p^2(x)\Big],
\end{equation*}
which concludes the proof.
\end{proof}

\end{document}